\documentclass[12pt,a4paper]{article}
\usepackage[utf8]{inputenc}
\usepackage[T1]{fontenc}
\usepackage[numbers]{natbib}
\usepackage[french,english]{babel}
\usepackage{amsmath, amssymb, amsthm}
\usepackage{mathtools}
\usepackage{bm}
\usepackage{geometry}
\usepackage{etoolbox}
\usepackage{float}
\IfFileExists{lmodern.sty}{\usepackage{lmodern}}{}
\IfFileExists{microtype.sty}{\usepackage{microtype}}{}
\IfFileExists{setspace.sty}{\usepackage{setspace}\setstretch{1.03}}{}
\IfFileExists{hyperref.sty}{\usepackage[hidelinks]{hyperref}}{}

\allowdisplaybreaks
\numberwithin{equation}{section}

\newtheorem{proposition}{Proposition}[section]
\newtheorem{lemma}{Lemma}[section]
\newtheorem{theorem}{Theorem}[section]
\newtheorem{corollary}{Corollary}[section]
\newtheorem{remark}{Remark}[section]
\theoremstyle{definition}
\newtheorem{definition}{Definition}[section]
\newtheorem{hypothesis}{Hypothesis}[section]

\newcommand{\R}{\mathbb{R}}

\title{Robust McKean--Vlasov Variational Systems with Asymmetric Loss Aversion: Well-Posedness, Stability, and Propagation of Chaos for the Forward and Regularized Backward Systems}
\author{
  Kayembe Tshiswaka Tcheick$^{1}$,
  Mabela Matendo Rostin$^{1}$,
  Bosonga Bofeki$^{2}$,\\
  Mbuyi Mukendi Eugene$^{1}$\\[2ex]
  $^{1}$Faculty of Science and Technology,\\
  \hspace*{1cm}University of Kinshasa, Kinshasa, D.R. Congo\\[1ex]
  $^{2}$Faculty of Economics and Management Sciences,\\
  \hspace*{1cm}University of Kinshasa, Kinshasa, D.R. Congo
}
\date{}

\begin{document}

\selectlanguage{english}
\maketitle

\begin{abstract}
We study a class of robust forward--backward McKean--Vlasov variational systems under model uncertainty represented by a non-dominated family of probability measures. Mean-field interactions are described through nonlinear collective observables acting on the laws of the forward and backward components. To model asymmetric loss aversion, we introduce a nonsmooth convex functional whose subdifferential defines a law-dependent maximal monotone operator acting on the forward state. We establish existence, uniqueness, and stability of the robust forward dynamics by a fixed-point argument in Wasserstein space. The backward component is formulated as a selected backward variational system rather than a classical backward stochastic variational inequality. Our analysis relies on Yosida regularization, uniform a priori estimates, convergence of the regularized solutions, and a Minty--Br\'ezis identification argument, yielding a canonical solution associated with the minimal norm selection. We further construct a particle approximation and prove propagation of chaos for the forward dynamics with explicit convergence rates uniformly over the non-dominated family. For each fixed regularization parameter, we also establish quantitative propagation of chaos for the regularized backward component and explain why estimates uniform in both the number of particles and the regularization parameter require additional non-contact assumptions near the nonsmooth threshold.
\end{abstract}

\medskip
\noindent
\textbf{Keywords:} robust McKean--Vlasov systems; collective observables; asymmetric loss aversion; selected backward variational systems; maximal monotone operators; Yosida regularization; variational analysis; robust propagation of chaos.

\medskip
\noindent
\textbf{Mathematics Subject Classification:} 60H10, 60H20, 49J40, 49J55, 65C35.

\newpage

\begin{otherlanguage}{french}
\medskip
\noindent\textbf{Résumé en français.}

Nous \'etudions une classe de syst\`emes variationnels de McKean--Vlasov forward--backward robustes, dans un cadre d'incertitude de mod\`ele repr\'esent\'e par une famille non domin\'ee de mesures de probabilit\'e. Les interactions de champ moyen sont d\'ecrites par des observables collectives non lin\'eaires agissant sur les lois des composantes forward et backward. Pour mod\'eliser l'aversion asym\'etrique aux pertes, nous introduisons une fonctionnelle convexe non lisse dont le sous-diff\'erentiel d\'efinit un op\'erateur maximal monotone d\'ependant de la loi et agissant sur l'\'etat forward. Nous \'etablissons l'existence, l'unicit\'e et la stabilit\'e de la dynamique forward robuste par un argument de point fixe dans l'espace de Wasserstein. La composante backward est formul\'ee comme un syst\`eme variationnel backward s\'electionn\'e plut\^ot que comme une in\'egalit\'e variationnelle stochastique backward classique. Notre analyse repose sur la r\'egularisation de Yosida, des estimations a priori uniformes, la convergence des solutions r\'egularis\'ees et un argument d'identification de Minty--Br\'ezis, conduisant \`a une solution canonique associ\'ee \`a la s\'election de norme minimale. Nous construisons \'egalement une approximation particulaire et d\'emontrons la propagation du chaos pour la dynamique forward avec des vitesses de convergence explicites, uniform\'ement sur la famille non domin\'ee. Pour chaque param\`etre de r\'egularisation fix\'e, nous \'etablissons \'egalement une propagation du chaos quantitative pour la composante backward r\'egularis\'ee et expliquons pourquoi des estim\'ees uniformes \`a la fois en le nombre de particules et en le param\`etre de r\'egularisation n\'ecessitent des hypoth\`eses de non-contact suppl\'ementaires pr\`es du seuil non lisse.
\end{otherlanguage}

\section{Introduction}

\subsection{Background and motivation}

The theory of McKean--Vlasov systems addresses a central question in statistical physics and probability theory: how can one describe the dynamics of a large population when each agent reacts not to its neighbors one by one, but to the aggregate state of the population? Since the foundational work of McKean \cite{McKean1966} and the rigorous formulation of propagation of chaos by Sznitman \cite{Sznitman1991}, this framework has become a reference tool for the study of large systems of interacting agents. It now underpins a significant part of the mean field game theory introduced by Lasry and Lions \cite{LasryLions2007}, as well as many models in mathematical finance, stochastic control, and multi-agent systems analysis.

It also naturally leads to coupled models.

Forward--backward McKean--Vlasov systems enrich this perspective by coupling state dynamics with a backward component. The latter can represent a future cost, an adjoint variable, a continuation value, or an optimality signal. The controlled case was developed, among others, by Carmona and Delarue \cite{CarmonaDelarue2015}, and their monographs \cite{CarmonaDelarue2018I,CarmonaDelarue2018II} provide a major synthesis of the theory. Recent developments, notably those of Bayraktar and Zhang \cite{BayraktarZhang2023} and Hua and Luo \cite{HuaLuo2024}, further extend its range of applications.

Three limitations, however, remain important.

The first concerns the way individual preferences are modeled. In classical models, gains and losses are often treated symmetrically. Prospect theory, due to Kahneman and Tversky \cite{KahnemanTversky1979}, shows that a loss is generally felt more strongly than a gain of the same magnitude. Introducing this loss aversion into a forward--backward variational system naturally leads to nonsmooth convex functionals and multivalued monotone operators.

The second limitation concerns the form of collective interactions. In many works, law dependence is reduced to a single average. This reduction is sometimes too poor to represent situations where agents react to more elaborate aggregate indicators: overall risk level, confidence index, collective benchmark, weighted performance, or systemic vulnerability measure. It then becomes necessary to work with nonlinear collective observables acting directly on distributions.

Finally, model uncertainty requires a more flexible framework than a single reference probability. Volatility parameters, diffusion mechanisms, or certain structural characteristics may be ambiguous. The works of Denis, Hu and Peng \cite{DenisHuPeng2011}, Soner, Touzi and Zhang \cite{SonerTouziZhang2011,SonerTouziZhang2012}, Nutz and van Handel \cite{NutzVanHandel2013}, and Peng \cite{Peng2019} laid the foundations for robust stochastic analysis under non-dominated families of probabilities. The interaction between this robust framework, variational systems and McKean--Vlasov dynamics is still only partially understood.

This observation is the starting point of the paper. We combine mean-field interactions described by collective observables, a selected backward variational dynamics driven by a nonsmooth monotone signal evaluated at the forward state, and robustness formulated with respect to a non-dominated family of admissible probabilities.

\subsection{The model under study}

We consider a class of robust forward--backward variational McKean--Vlasov systems whose backward component is driven by a monotone ALA signal evaluated at the forward state:

\begin{equation}\label{eq:intro_system}
\begin{cases}
dX_t = b\bigl(t,X_t,\Psi_\varphi(\mu_t^P),u_t\bigr)dt + \sigma\bigl(t,X_t,\Psi_\beta(\mu_t^P),u_t\bigr)dB_t,\\[4pt]
-dY_t =
\Bigl[f\bigl(t,X_t,\Psi_\gamma(\mu_t^P),Y_t,Z_t,\Psi_\delta(\nu_t^P),u_t\bigr) + \rho\,\Gamma_t\Bigr]dt - Z_t dB_t,\\[4pt]
X_0 = \xi,\qquad Y_T = g\bigl(X_T,\Psi_\lambda(\mu_T^P)\bigr),\\[4pt]
\mu_t^P = \mathcal L^P(X_t),\qquad \nu_t^P = \mathcal L^P(Y_t),\\[4pt]
\Gamma_t\in A_{\mathrm{ALA}}\bigl(X_t,\Upsilon(\mu_t^P)\bigr).
\end{cases}
\end{equation}

The canonical formulation studied below corresponds to the minimal norm selection
\[
\Gamma_t=A_{\mathrm{ALA}}^0\bigl(X_t,\Upsilon(\mu_t^P)\bigr)
:=
\operatorname*{arg\,min}_{\xi\in A_{\mathrm{ALA}}(X_t,\Upsilon(\mu_t^P))}|\xi|.
\]

The maps $\Psi_\varphi,\Psi_\beta,\Psi_\gamma,\Psi_\delta,\Psi_\lambda$ are collective observables of the form
\[
\Psi_h(\mu) = \int h(x)\,\mu(dx),
\]
where $h$ is a Lipschitz function. The ALA reference observable is denoted by
\[
\Upsilon(\mu)=\int \chi(x)\,\mu(dx),
\]
with \(\chi\) Lipschitz. This formulation encompasses the classical mean case, but also allows for finer aggregated indicators.

Asymmetric loss aversion is carried by the convex functional

\begin{equation}\label{eq:intro_Phi}
\Phi_{\mathrm{ALA}}(x,\mu)
=
\sum_{i=1}^d
\left[
\kappa_+ \bigl(x_i-\Upsilon(\mu)\bigr)_+
+
\kappa_- \bigl(x_i-\Upsilon(\mu)\bigr)_-
\right]
+
\frac{\varepsilon}{2}|x|^2
+
\frac{\beta}{2}W_2^2(\mu,\mu^\star),
\end{equation}

with $\kappa_- > \kappa_+ > 0$. The positive and negative parts are taken componentwise.

The term $\Upsilon(\mu)$ plays the role of a collective reference. When $\chi(x)=x$, one recovers a mean reference; other choices of $\chi$ allow modeling a risk index, a performance measure, or a nonlinear benchmark. The inequality $\kappa_- > \kappa_+$ expresses the behavioral idea that an unfavorable deviation from the collective reference is penalized more severely than a favorable deviation of the same magnitude. The Wasserstein distance term introduces a comparison with a reference distribution $\mu^\star$, which can be interpreted as a regulatory constraint, a social norm, or a collective objective.

The functional \(\Phi_{\mathrm{ALA}}\) is not differentiable on the critical hyperplane \(\{x=\Upsilon(\mu)\mathbf 1\}\). Its \(x\)-subdifferential defines a maximal monotone multi-valued operator in the sense of Brézis \cite{Brezis1973,Brezis2011} and Rockafellar \cite{Rockafellar1970}, but this operator depends on the law only through the reference \(\Upsilon(\mu)\). In the present model, it is evaluated at the forward state \(X_t\), and not at the backward variable \(Y_t\). Hence the backward component is not a classical BSVI governed by a monotone operator in \(Y\). It is a selected backward variational system driven by the monotone signal \(A_{\mathrm{ALA}}(X_t,\Upsilon(\mu_t^P))\). The terminology and techniques of BSVI remain relevant for the variational and Yosida arguments, but the well-posedness and uniqueness statements below concern the selected formulation, in particular the canonical minimal norm selection.

This distinction is important for positioning the paper. Unlike classical backward stochastic variational inequalities, where the maximal monotone operator constrains the backward state itself, the present model uses a monotone variational signal generated by the forward state and by a collective reference. Thus the backward equation remains a valuation or adjoint-type dynamics, while the nonsmooth behavioral force is transmitted through the exogenous signal
\[
\Gamma_t^P\in A_{\mathrm{ALA}}\bigl(X_t^P,\Upsilon(\mu_t^P)\bigr).
\]
This is precisely what allows the analysis to combine BSVI-inspired tools, McKean--Vlasov interactions, and robust non-dominated probabilities without claiming to solve a classical BSVI in the variable \(Y\).

\subsection{Analytical difficulties and strategy}

The analysis of the above system involves three difficulties that cannot be treated separately.

The first stems from the non-differentiable nature of $\Phi_{\mathrm{ALA}}$. On the critical set, the subdifferential $\partial_x\Phi_{\mathrm{ALA}}$ is no longer a singleton. Since this subdifferential is evaluated at \(X\), the difficulty is not a monotonicity issue in \(Y\), but a selection and stability issue for the monotone signal entering the backward variational dynamics. We circumvent this difficulty by Yosida regularization \cite{Yosida1965}, which replaces the monotone signal by a family of Lipschitz functions before passing to the limit by means of a Minty--Brezis argument.

The second difficulty concerns the generalized law dependence. The coefficients do not depend only on a mean, but on several distinct collective observables. The natural space is then the Wasserstein space \cite{Villani2009,AmbrosioGigliSavare2005}, and the stability of observables with respect to $W_2$ becomes an essential ingredient of the McKean--Vlasov fixed point.

The third difficulty is related to model uncertainty. A priori estimates, stability results, compactness arguments and limit theorems must be uniform over the non-dominated family $\mathcal P$. The robust spaces $L^2_{\mathcal P}$, $\mathcal S^2_{\mathcal P}$ and $\mathcal H^2_{\mathcal P}$ provide the appropriate functional framework.

Finally, the particle approximation requires simultaneously controlling the mean-field error, the variational structure, and the robust uniformity.

\subsection{Main contributions}

The present work introduces a new class of robust McKean--Vlasov variational systems driven by a maximal monotone operator associated with asymmetric loss aversion. Within this framework, we establish well-posedness and stability under non-dominated model uncertainty, develop a Yosida regularization procedure preserving the variational structure, and prove propagation of chaos for both the forward dynamics and the regularized backward component. We also identify the mathematical obstruction to estimates that are simultaneously uniform with respect to the number of particles and the regularization parameter, thereby giving a precise description of the scope and limitations of the proposed approach.

The contributions of the paper can be summarized in four main points.

\textbf{A robust McKean--Vlasov variational formulation with a maximal monotone ALA signal.}
We introduce a robust forward--backward McKean--Vlasov system in which the collective interaction is described by nonlinear observables of the law and the behavioral component is generated by an asymmetric loss-aversion functional. The associated \(x\)-subdifferential defines a law-dependent maximal monotone signal evaluated at the forward state. This formulation is not a classical BSVI in the backward variable; it is a selected backward variational system driven by
\[
\Gamma_t^P\in A_{\mathrm{ALA}}\bigl(X_t^P,\Upsilon(\mu_t^P)\bigr).
\]

\textbf{Well-posedness, canonical uniqueness and stability.}
We establish the analytical properties of the ALA functional, including convexity, coercivity, strong monotonicity, controlled growth and Wasserstein stability. We then prove existence, uniqueness and stability for the robust forward dynamics through a fixed point on the space of measure flows. For the backward component, we construct Yosida-regularized selected systems, obtain uniform estimates, identify the limit by a Minty--Brezis graph-closure argument, and prove canonical uniqueness of the backward pair associated with the minimal norm selection.

\textbf{Forward and regularized backward propagation of chaos.}
We analyze a particle approximation of the system and prove a quantitative forward propagation of chaos with an explicit Fournier--Guillin rate, uniformly over the non-dominated family of admissible probabilities. In addition, for every fixed \(\lambda>0\), we prove a quantitative propagation of chaos result for the Yosida-regularized backward component. This result is stated separately as Theorem~\ref{thm:backward-chaos-fixed-lambda}, so that the distinction between non-regularized forward propagation and fixed-\(\lambda\) backward propagation is visible from the outset.

\textbf{Limitations of the method and future directions.}
We identify the precise limitation of the approach: a rate that is uniform simultaneously in \(N\) and \(\lambda\) cannot be expected without additional control of the nonsmooth threshold. We therefore discuss the role of a possible non-contact condition and outline future directions, including the simultaneous limit \(N\to\infty\), \(\lambda\downarrow0\), robust mean field games, robust control, and financial applications involving ambiguity-sensitive behavioral preferences.

\subsection{Organization of the paper}

Section 2 sets the robust probabilistic framework, introduces the functional spaces, the collective observables and the structural assumptions of the model. Section 3 establishes the existence, uniqueness and stability of the robust forward dynamics. Section 4 develops the variational analysis of the selected backward component: ALA functional, Yosida regularization, uniform estimates, graph closure by the Minty--Brezis argument, uniqueness and stability. Section 5 is devoted to the particle system, forward propagation of chaos, and backward propagation of chaos for fixed \(\lambda\). Section 6 discusses the limits of the simultaneous passage \(N\to\infty\), \(\lambda\downarrow0\), and the role of possible non-contact assumptions. The conclusion summarizes the scope of the results and presents several research directions.

\begin{figure}[H]
\centering
\[
\boxed{\text{Robust framework}}
\quad\Longrightarrow\quad
\boxed{\text{Collective observables}}
\quad\Longrightarrow\quad
\boxed{\text{Robust forward}}
\]

\vspace{0.5cm}

\[
\begin{aligned}
\boxed{\text{ALA functional}}
&\quad\Longrightarrow\quad
\boxed{\text{Selected backward variational system}}\\[3pt]
&\quad\Longrightarrow\quad
\boxed{\text{Uniform propagation of chaos}}.
\end{aligned}
\]

\caption{Mathematical structure of the model and main results.}
\label{fig:structure}
\end{figure}

\section{Robust probabilistic framework, collective observables and assumptions}

We fix in this section the basic objects of the model: the robust probabilistic framework, the functional spaces adapted to non-dominated families, the collective observables and the structural assumptions. The existence, uniqueness and stability results for the forward dynamics will be established in Section 3.

\subsection{Canonical space and admissible probabilities}

Let $T>0$. Set $\Omega = C([0,T];\mathbb{R}^d)$, $\mathcal{F}$ its Borel $\sigma$-algebra, and $B_t(\omega)=\omega(t)$ the canonical process. The filtration $\mathbb{F}=(\mathcal{F}_t)_{0\le t\le T}$ is the natural filtration of $B$, completed and made right-continuous.

Let $\underline a,\overline a\in\mathbb{S}_d^+$ with $0<\underline a\le\overline a$. The set of admissible volatility processes is
\[
\mathcal{A} = \{ a : [0,T]\times\Omega\to\mathbb{S}_d^+ \mid a \text{ progressively measurable},\ \underline a\le a_t\le\overline a \}.
\]
For $a\in\mathcal{A}$, let $P^a$ be the probability under which $d\langle B\rangle_t = a_t dt$. The family of admissible probabilities is
\[
\mathcal{P} = \{P^a : a\in\mathcal{A}\}.
\]
$\mathcal{P}$ is not dominated; all estimates will be uniform in $P$ (cf. \cite{DenisHuPeng2011, SonerTouziZhang2011}).

\subsection{Robust spaces and capacity}

\begin{definition}\label{def:L2P}
$L^2_{\mathcal{P}} = \{ X \mid \|X\|_{L^2_{\mathcal{P}}} := (\sup_{P\in\mathcal{P}}\mathbb{E}^P[|X|^2])^{1/2}<\infty \}$ (modulo $\mathcal{P}$-q.s. equality).
\end{definition}

\begin{definition}\label{def:S2P}
$\mathcal{S}^2_{\mathcal{P}} = \{ Y \text{ progressively measurable} \mid \|Y\|_{\mathcal{S}^2_{\mathcal{P}}} := (\sup_{P\in\mathcal{P}}\mathbb{E}^P[\sup_{0\le t\le T}|Y_t|^2])^{1/2}<\infty \}$.
\end{definition}

\begin{definition}\label{def:H2P}
For stochastic integrals with respect to the canonical process \(B\), the natural robust norm is the quadratic-variation weighted norm
\[
\|Z\|_{\mathcal{H}^2_{\mathcal P,a}}
:=
\left(
\sup_{P=P^a\in\mathcal P}
\mathbb E^P
\left[
\int_0^T
\operatorname{Tr}(Z_ta_tZ_t^\top)\,dt
\right]
\right)^{1/2}.
\]
We set
\[
\mathcal H^2_{\mathcal P}
:=
\{Z\ \text{progressively measurable}\mid
\|Z\|_{\mathcal{H}^2_{\mathcal P,a}}<\infty\}.
\]
\end{definition}

Since \(a_t\) is uniformly elliptic, this weighted norm is equivalent to the usual norm based on \(\int_0^T |Z_t|^2dt\). In all It\^o identities, however, the weighted quantity \(\operatorname{Tr}(Z_ta_tZ_t^\top)\) is the natural one; this is the robust counterpart of the standard quadratic-variation calculus for Itô integrals \cite{KaratzasShreve1991}. For readability, we shall write
\[
|z|_{a_t}^2:=\operatorname{Tr}(za_tz^\top).
\]

These spaces are Banach spaces \cite{DenisHuPeng2011, SonerTouziZhang2011}. Their functional properties are recalled here only to fix notation; the complete proofs are part of the standard theory of robust spaces under non-dominated families of probabilities.

\begin{definition}\label{def:capacity}
$c(A)=\sup_{P\in\mathcal{P}}P(A)$ for $A\in\mathcal{F}$. $c$ is the upper capacity.
\end{definition}

\begin{definition}\label{def:polar}
$A$ is $\mathcal{P}$-polar if $c(A)=0$, i.e. $P(A)=0$ for all $P\in\mathcal{P}$.
\end{definition}

A property is $\mathcal{P}$-quasi-sure ($\mathcal{P}$-q.s.) if it holds outside a polar set \cite{NutzVanHandel2013}. These spaces provide the functional framework in which the robust forward--backward dynamics will be formulated. We now introduce the nonlinear collective observables and the structural objects of the model.

\subsection{Wasserstein distance and collective observables}

$\mathcal{P}_2(\mathbb{R}^d)$ denotes the set of measures on $\mathbb{R}^d$ with finite second moment. The Wasserstein distance $W_2$ is given by
\[
W_2(\mu,\nu)^2 = \inf_{\pi\in\Pi(\mu,\nu)}\int|x-y|^2\pi(dx,dy),
\]
where $\Pi(\mu,\nu)$ is the set of couplings \cite{Villani2009}. For a complete presentation, see \cite{AmbrosioGigliSavare2005}. For any process $X$, we denote $\mu_t^P=\mathcal{L}^P(X_t)$.

$U\subset\mathbb{R}^k$ is closed convex. We set
\[
\mathcal{U} = \Big\{ u \text{ progressively measurable},\ u_t\in U,\ \sup_{P\in\mathcal{P}}\mathbb{E}^P\Big[\int_0^T|u_t|^2dt\Big]<\infty \Big\}.
\]

Let $\varphi:\mathbb R^d\to\mathbb R^{d_\varphi},\ \beta:\mathbb R^d\to\mathbb R^{d_\beta},\ \gamma:\mathbb R^d\to\mathbb R^{d_\gamma},\ \delta:\mathbb R\to\mathbb R^{d_\delta},\ \lambda:\mathbb R^d\to\mathbb R^{d_\lambda}$ be given Lipschitz functions, where $d_\varphi,d_\beta,d_\gamma,d_\delta,d_\lambda$ are positive integers.

For every $P\in\mathcal{P}$ and every process $X$ (respectively $Y$), we define the collective observables
\begin{align*}
M_t^{\varphi,P} &= \mathbb E^P[\varphi(X_t)], &
M_t^{\beta,P} &= \mathbb E^P[\beta(X_t)], \\
M_t^{\gamma,P} &= \mathbb E^P[\gamma(X_t)], &
M_t^{\delta,P} &= \mathbb E^P[\delta(Y_t)], \\
M_T^{\lambda,P} &= \mathbb E^P[\lambda(X_T)].
\end{align*}

\begin{remark}
Although these observables are entirely determined by the marginal law $\mu_t^P = \mathcal{L}^P(X_t)$ (or by the law of $Y_t$ for $M_t^{\delta,P}$), their explicit introduction allows us to distinguish the probabilistic interaction mechanism from the economic quantities that enter the coefficients. This separation will be particularly useful in the stability analysis and in the formulation of the fixed point.
\end{remark}

\begin{lemma}[Wasserstein stability of collective observables]
\label{lem:stability-observables}
Let $\mu,\nu\in\mathcal P_2(\mathbb R^d)$ and $\psi:\mathbb R^d\to\mathbb R^{d_\psi}$ be an $L_\psi$-Lipschitz function. Define $\Psi_\psi(\mu)=\int\psi\,d\mu$. Then
\[
|\Psi_\psi(\mu)-\Psi_\psi(\nu)| \le L_\psi W_2(\mu,\nu).
\]
In particular, for the observables $\varphi,\beta,\gamma,\lambda$, we respectively have
\[
|\Psi_\varphi(\mu)-\Psi_\varphi(\nu)| \le L_\varphi W_2(\mu,\nu),\qquad
|\Psi_\beta(\mu)-\Psi_\beta(\nu)| \le L_\beta W_2(\mu,\nu),
\]
\[
|\Psi_\gamma(\mu)-\Psi_\gamma(\nu)| \le L_\gamma W_2(\mu,\nu),\qquad
|\Psi_\lambda(\mu)-\Psi_\lambda(\nu)| \le L_\lambda W_2(\mu,\nu).
\]
The reference observable used in the ALA functional is a particular case of the previous lemma corresponding to the choice of the test function $\psi$.
\end{lemma}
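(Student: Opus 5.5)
The argument is a coupling estimate followed by Cauchy--Schwarz. Fix $\mu,\nu\in\mathcal P_2(\mathbb R^d)$ and let $\pi\in\Pi(\mu,\nu)$ be an arbitrary coupling. Since $\psi$ is Lipschitz, $|\psi(x)|\le|\psi(0)|+L_\psi|x|$. Hence $\psi$ is integrable against any measure with finite second moment, so $\Psi_\psi(\mu)$ and $\Psi_\psi(\nu)$ are well defined vectors in $\mathbb R^{d_\psi}$.

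The first marginal of $\pi$ is $\mu$ and the second is $\nu$, so
\[
\Psi_\psi(\mu)-\Psi_\psi(\nu)=\int\bigl(\psi(x)-\psi(y)\bigr)\,\pi(dx,dy).
\]
Taking Euclidean norms inside the integral (Jensen for the convex norm), then using the Lipschitz bound and Cauchy--Schwarz with respect to the probability $\pi$, gives
\[
|\Psi_\psi(\mu)-\Psi_\psi(\nu)|\le L_\psi\int|x-y|\,\pi(dx,dy)\le L_\psi\Bigl(\int|x-y|^2\,\pi(dx,dy)\Bigr)^{1/2}.
\]

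Next I take the infimum over $\pi\in\Pi(\mu,\nu)$. The left-hand side does not depend on $\pi$, so this yields $|\Psi_\psi(\mu)-\Psi_\psi(\nu)|\le L_\psi W_2(\mu,\nu)$. An optimal coupling exists, but it is not needed.

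The particular cases follow by taking $\psi=\varphi,\beta,\gamma,\lambda$ with their Lipschitz constants. The ALA reference $\Upsilon$ is the case $\psi=\chi$.

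There is no real obstacle here. The only points to check are that the Lipschitz growth makes the integrals finite, and that the norm is taken inside the integral before Cauchy--Schwarz is applied, which handles vector-valued $\psi$. The observable $\delta$, defined on $\mathbb R$ and acting on the law of $Y_t$, is covered by the same argument with $d=1$.
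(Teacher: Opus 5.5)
Your proposal is correct and follows the paper's proof step for step. You integrate $\psi(x)-\psi(y)$ against an arbitrary coupling $\pi\in\Pi(\mu,\nu)$, bring the norm inside, apply the Lipschitz bound and Cauchy--Schwarz, and then take the infimum over couplings. Your extra check that $\Psi_\psi(\mu)$ and $\Psi_\psi(\nu)$ are well defined, via the linear growth $|\psi(x)|\le|\psi(0)|+L_\psi|x|$, makes explicit a point the paper leaves implicit.
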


\begin{proof}
Let $\pi\in\Pi(\mu,\nu)$ be an arbitrary coupling. By definition,
\[
\Psi_\psi(\mu)-\Psi_\psi(\nu) = \int (\psi(x)-\psi(y))\,\pi(dx,dy).
\]
Thus,
\[
|\Psi_\psi(\mu)-\Psi_\psi(\nu)| \le \int |\psi(x)-\psi(y)|\,\pi(dx,dy) \le L_\psi \int |x-y|\,\pi(dx,dy).
\]
By Cauchy-Schwarz,
\[
\int |x-y|\,\pi(dx,dy) \le \left(\int |x-y|^2\,\pi(dx,dy)\right)^{1/2}.
\]
Taking the infimum over all couplings $\pi\in\Pi(\mu,\nu)$, we obtain
\[
|\Psi_\psi(\mu)-\Psi_\psi(\nu)| \le L_\psi W_2(\mu,\nu).
\]
The particular cases are immediate.
\end{proof}

\begin{corollary}[$L^2$ stability of collective observables]
\label{cor:Psi-law}
Let $X$ and $Y$ be two square-integrable random variables defined on the same probability space. Then for any $L_\psi$-Lipschitz function $\psi$,
\[
\bigl| \mathbb E[\psi(X)] - \mathbb E[\psi(Y)] \bigr| \le L_\psi \bigl( \mathbb E[|X-Y|^2] \bigr)^{1/2}.
\]
\end{corollary}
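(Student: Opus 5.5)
The plan is to derive Corollary~\ref{cor:Psi-law} directly from Lemma~\ref{lem:stability-observables}, using the joint law of $(X,Y)$ as an admissible coupling. Let $\mu=\mathcal L(X)$ and $\nu=\mathcal L(Y)$ under the common probability measure. Both belong to $\mathcal P_2(\mathbb R^d)$ because $X$ and $Y$ are square-integrable. The Lipschitz property gives $|\psi(x)|\le|\psi(0)|+L_\psi|x|$, so $\psi(X)$ and $\psi(Y)$ are integrable and the expectations in the statement are well defined. By the transfer formula, $\mathbb E[\psi(X)]=\Psi_\psi(\mu)$ and $\mathbb E[\psi(Y)]=\Psi_\psi(\nu)$.

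Lemma~\ref{lem:stability-observables} then yields
\[
|\mathbb E[\psi(X)]-\mathbb E[\psi(Y)]|\le L_\psi W_2(\mu,\nu).
\]
It remains to bound $W_2(\mu,\nu)$. The joint law $\pi_0:=\mathcal L(X,Y)$ has marginals $\mu$ and $\nu$, so $\pi_0\in\Pi(\mu,\nu)$. Since $W_2$ is an infimum over couplings, taking $\pi_0$ gives
\[
W_2(\mu,\nu)^2\le\int|x-y|^2\,\pi_0(dx,dy)=\mathbb E[|X-Y|^2].
\]
Combining the two displays proves the claim.

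As an alternative that bypasses Wasserstein distances altogether, one can apply Jensen's inequality and the Lipschitz bound pointwise:
\[
|\mathbb E[\psi(X)-\psi(Y)]|\le \mathbb E|\psi(X)-\psi(Y)|\le L_\psi\,\mathbb E|X-Y|,
\]
and then Cauchy--Schwarz gives $\mathbb E|X-Y|\le(\mathbb E|X-Y|^2)^{1/2}$.

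There is no real obstacle. The only points needing care are that the expectations are finite, which follows from the linear growth of $\psi$, and that the joint law is a genuine element of $\Pi(\mu,\nu)$. Both are immediate.
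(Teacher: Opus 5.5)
Your proposal is correct and matches the paper's proof: set $\mu=\mathcal L(X)$ and $\nu=\mathcal L(Y)$, apply Lemma~\ref{lem:stability-observables}, then bound $W_2^2(\mu,\nu)\le\mathbb E[|X-Y|^2]$ by taking the joint law of $(X,Y)$ as an admissible coupling. Your alternative Jensen--Cauchy--Schwarz route is also valid; it is the lemma's own proof applied to that single coupling, so it avoids the infimum over couplings entirely.
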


\begin{proof}
Set $\mu=\mathcal L(X)$ and $\nu=\mathcal L(Y)$. The previous lemma gives
\[
|\Psi_\psi(\mu)-\Psi_\psi(\nu)| \le L_\psi W_2(\mu,\nu).
\]
Since the coupling $(X,Y)$ is admissible for $W_2$,
\[
W_2^2(\mu,\nu) \le \mathbb E[|X-Y|^2].
\]
We conclude immediately.
\end{proof}

\begin{remark}[Economic interpretation of collective observables]
\label{rem:economic-observables}
$M_t^{\varphi,P}$ is a collective indicator (average confidence, market sentiment). $M_t^{\beta,P}$ represents a perceived dispersion or volatility. $M_t^{\gamma,P}$ is a benchmark or aggregated wealth reference. $M_t^{\delta,P}$ aggregates future costs or anticipated risks. $M_T^{\lambda,P}$ is a social or regulatory performance criterion. The particular case $\varphi(x)=\beta(x)=\gamma(x)=\delta(y)=\lambda(x)=x$ recovers classical models.
\end{remark}

The various ingredients of the model being now defined, we can formulate the asymmetric loss aversion functional which constitutes the core of the variational component of the model.

\subsection{Asymmetric loss aversion functional}

\subsubsection{Reference collective observable}

\begin{definition}[Reference collective observable]
\label{def:Psi}

Let $\psi:\mathbb R^d\longrightarrow \mathbb R$ be a Lipschitz function with constant $L_\psi$.

For any measure $\mu\in\mathcal P_2(\mathbb R^d)$, we define the collective observable

\[
\Psi(\mu) := \int_{\mathbb R^d}\psi(y)\,\mu(dy).
\]

The quantity $\Psi(\mu)$ represents an aggregated indicator of the population of agents. Depending on the choice of $\psi$, it can model an average wealth level, a dispersion measure, a systemic risk indicator, or a market sentiment index. When this observable is used as the reference of the asymmetric loss aversion functional, we write it as \(\Upsilon(\mu)\). Thus \(\Upsilon\) is not an additional object; it is the ALA reference observable among the family of collective observables.
\end{definition}

\begin{remark}[Reduced collective interactions]
\label{rem:reduced-interactions}
The considered model belongs to the class of McKean--Vlasov systems with reduced law dependence (reduced mean-field interactions). The collective interaction is not described by the entire measure $\mu$ but by a finite number of Lipschitz observables obtained in the form $\Psi(\mu) = \int \psi(y)\mu(dy)$. This formulation allows modeling collective behaviors richer than the simple empirical mean while preserving an analytical structure compatible with Wasserstein stability estimates.
\end{remark}

\subsubsection[Functional Phi ALA]{Functional $\Phi_{\mathrm{ALA}}$}

\begin{definition}[Asymmetric loss aversion functional]
\label{def:ALA}

The asymmetric loss aversion functional is defined by

\[
\Phi_{\mathrm{ALA}} : \mathbb R^d\times\mathcal P_2(\mathbb R^d) \longrightarrow \mathbb R
\]

and

\[
\Phi_{\mathrm{ALA}}(x,\mu) = \vartheta\bigl(x - \Upsilon(\mu)\mathbf{1}\bigr) + \frac{\varepsilon}{2}|x|^2 + \frac{\beta}{2}W_2^2(\mu,\mu^\star),
\]

where $\mathbf{1}=(1,\ldots,1)\in\mathbb R^d$ and

\[
\vartheta(z) = \sum_{i=1}^d \vartheta_0(z_i),\qquad
\vartheta_0(r) = \kappa_+ r_+ + \kappa_- r_-,
\]
with $\kappa_- > \kappa_+ > 0$. The operators $(\cdot)_+$ and $(\cdot)_-$ are understood componentwise.

\end{definition}

\begin{remark}[Variational interpretation]
\label{rem:ALA-variational}

The nonsmooth contribution induced by asymmetric loss aversion is described first at the level of the convex functional \(\Phi_{\mathrm{ALA}}\). We define the associated maximal monotone operator by
\[
A_{\mathrm{ALA}}(x,\mu)
:=
\partial_x\Phi_{\mathrm{ALA}}(x,\mu),
\]
where the subdifferential is taken with respect to the state variable \(x\), while the probability measure \(\mu\) acts as a parameter.

In the present work, the dependence on the probability distribution is entirely encoded through the collective observable
\[
r=\Upsilon(\mu).
\]
Consequently, once this observable has been computed, the operator admits the reduced representation
\[
A_{\mathrm{ALA}}(x,\mu)
=
A_{\mathrm{ALA}}(x,r),
\qquad
r=\Upsilon(\mu).
\]
Since \(\Phi_{\mathrm{ALA}}\) is convex with respect to the state variable, this subdifferential can be computed explicitly. Denoting \(r=\Upsilon(\mu)\), one obtains
\[
A_{\mathrm{ALA}}(x,r)
=
\partial\vartheta(x-r\mathbf 1)+\varepsilon x
=
\prod_{i=1}^d \partial\vartheta_0(x_i-r)+\varepsilon x,
\]
where the product is understood as a Cartesian product of one-dimensional subdifferentials and is identified with a subset of \(\mathbb R^d\). Equivalently,
\[
\partial_x\Phi_{\mathrm{ALA}}(x,\mu)
=
A_{\mathrm{ALA}}\bigl(x,\Upsilon(\mu)\bigr).
\]
The operator \(x\mapsto A_{\mathrm{ALA}}(x,r)\) is multi-valued, maximal monotone and \(\varepsilon\)-strongly monotone for every fixed \(r\). These properties will be rigorously proved in Section 4. The asymmetry \(\kappa_->\kappa_+\) reflects the fact that a relative loss is perceived as more important than a relative gain of the same magnitude.

This point of view is fundamental throughout the paper. Although the operator is ultimately evaluated through the observable \(r=\Upsilon(\mu)\), its mathematical definition remains that of the convex subdifferential of the asymmetric loss-aversion functional. This distinction plays a central role in the variational analysis, the Yosida regularization, and the Minty--Brezis identification argument developed below.

\end{remark}

\begin{remark}[Role of the Wasserstein term]
\label{rem:wasserstein-role}

The term $\frac{\beta}{2}W_2^2(\mu,\mu^\star)$ does not depend on the state variable $x$. Consequently, it does not appear in the subdifferential with respect to $x$ :

\[
\partial_x\Phi_{\mathrm{ALA}}(x,\mu)
=
A_{\mathrm{ALA}}\bigl(x,\Upsilon(\mu)\bigr)
=
\partial\vartheta\bigl(x-\Upsilon(\mu)\mathbf 1\bigr)+\varepsilon x.
\]

Its role is solely to penalize distributions that deviate from a reference distribution $\mu^\star$.
\end{remark}

\begin{proposition}[Wasserstein stability of the ALA functional]
\label{prop:Phi-wasserstein}
Let $\mu,\nu\in\mathcal P_2(\mathbb R^d)$. Assume that their second moments are bounded by a common constant $R>0$. Then, for all $x\in\mathbb R^d$, there exists a constant $C_R>0$, depending only on $R$, $\mu^\star$ and the parameters of the functional, such that
\[
\bigl|\Phi_{\mathrm{ALA}}(x,\mu)-\Phi_{\mathrm{ALA}}(x,\nu)\bigr|
\le
C_R\,(1+|x|)\,W_2(\mu,\nu).
\]
In particular, on families of laws with uniformly bounded second moment, the functional $\Phi_{\mathrm{ALA}}$ is stable with respect to the Wasserstein distance.
\end{proposition}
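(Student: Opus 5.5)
The plan is to split $\Phi_{\mathrm{ALA}}(x,\mu)-\Phi_{\mathrm{ALA}}(x,\nu)$ into its three parts. The quadratic term $\frac{\varepsilon}{2}|x|^2$ does not depend on the measure and cancels. This leaves the loss-aversion term and the Wasserstein penalty:
\[
\Phi_{\mathrm{ALA}}(x,\mu)-\Phi_{\mathrm{ALA}}(x,\nu)
=
\bigl[\vartheta(x-\Upsilon(\mu)\mathbf 1)-\vartheta(x-\Upsilon(\nu)\mathbf 1)\bigr]
+\frac{\beta}{2}\bigl[W_2^2(\mu,\mu^\star)-W_2^2(\nu,\mu^\star)\bigr].
\]
I will bound each bracket by a constant times $W_2(\mu,\nu)$. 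That gives a bound even without the factor $(1+|x|)$, and the stated bound follows because $1\le 1+|x|$.

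For the first bracket, note that $\vartheta_0(r)=\kappa_+r_++\kappa_-r_-$ is Lipschitz on $\mathbb R$ with constant $\kappa_-=\max(\kappa_+,\kappa_-)$. Summing over the $d$ coordinates gives
\[
|\vartheta(z)-\vartheta(z')|\le \kappa_-\sum_{i=1}^d|z_i-z_i'|.
\]
Here $z-z'=(\Upsilon(\nu)-\Upsilon(\mu))\mathbf 1$, so the first bracket is at most $d\,\kappa_-|\Upsilon(\mu)-\Upsilon(\nu)|$. Lemma~\ref{lem:stability-observables}, applied to the Lipschitz test function $\chi$ defining $\Upsilon$, then bounds this by $d\,\kappa_-L_\chi W_2(\mu,\nu)$. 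No moment bound is needed for this part.

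For the second bracket, I factor the difference of squares:
\[
W_2^2(\mu,\mu^\star)-W_2^2(\nu,\mu^\star)=\bigl(W_2(\mu,\mu^\star)-W_2(\nu,\mu^\star)\bigr)\bigl(W_2(\mu,\mu^\star)+W_2(\nu,\mu^\star)\bigr).
\]
By the triangle inequality for $W_2$, the first factor is at most $W_2(\mu,\nu)$ in absolute value. For the second factor, I use the product coupling to get
\[
W_2(\mu,\mu^\star)\le \Bigl(\int|x|^2d\mu\Bigr)^{1/2}+\Bigl(\int|y|^2d\mu^\star\Bigr)^{1/2}\le R^{1/2}+m_2(\mu^\star)^{1/2},
\]
and the same bound holds for $\nu$. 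This is the only place where the common second-moment bound $R$ enters, and it is what makes $C_R$ depend on $R$ and $\mu^\star$. The second bracket is therefore at most $\beta\bigl(R^{1/2}+m_2(\mu^\star)^{1/2}\bigr)W_2(\mu,\nu)$.

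Adding the two bounds gives the claim with $C_R=d\,\kappa_-L_\chi+\beta\bigl(R^{1/2}+m_2(\mu^\star)^{1/2}\bigr)$, independent of $x$. The only step needing care is the Wasserstein penalty term, which is quadratic in $W_2$ and so only locally Lipschitz. That is why I handle it by factoring the difference of squares and using the moment bound, rather than treating it as globally Lipschitz.
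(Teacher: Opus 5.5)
Your proof is correct and follows the same route as the paper: cancel the $\frac{\varepsilon}{2}|x|^2$ term, control the loss-aversion part through the Lipschitz continuity of $\vartheta$ and Lemma~\ref{lem:stability-observables}, and handle the penalty by factoring $W_2^2(\mu,\mu^\star)-W_2^2(\nu,\mu^\star)$ and bounding $W_2(\mu,\mu^\star)+W_2(\nu,\mu^\star)$ with the common second-moment bound. Your version simply makes the constant explicit, $C_R=d\,\kappa_-L_\chi+\beta\bigl(R^{1/2}+m_2(\mu^\star)^{1/2}\bigr)$, and correctly notes that it is independent of $x$, so the factor $(1+|x|)$ is not actually needed.
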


\begin{proof}
The function $\vartheta$ is Lipschitz, with constant controlled by $\max(\kappa_+,\kappa_-)$ and the dimension $d$. By Lemma~\ref{lem:stability-observables},
\[
|\Upsilon(\mu)-\Upsilon(\nu)| \le L_\psi W_2(\mu,\nu).
\]
We deduce that the asymmetric part of $\Phi_{\mathrm{ALA}}$ varies at most linearly in $W_2(\mu,\nu)$. The term $\frac{\varepsilon}{2}|x|^2$ does not depend on the law. Finally,
\[
\bigl|W_2^2(\mu,\mu^\star)-W_2^2(\nu,\mu^\star)\bigr|
\le
\bigl(W_2(\mu,\mu^\star)+W_2(\nu,\mu^\star)\bigr)W_2(\mu,\nu),
\]
and the factor in parentheses is bounded as soon as the second moments of $\mu$ and $\nu$ are. Gathering constants, we obtain the stated estimate.
\end{proof}

\begin{remark}
Proposition~\ref{prop:Phi-wasserstein} establishes the continuity of the functional $\Phi_{\mathrm{ALA}}$ with respect to the measure variable in the Wasserstein topology. We emphasize however that this property alone does not imply the convergence of the graphs of the subdifferentials $\partial_x\Phi_{\mathrm{ALA}}(\cdot,\mu)$. In Section 4, the variational identification will be obtained for a fixed measure $\mu_t^P$, which allows applying directly the classical theory of maximal monotone operators.
\end{remark}

\subsection{Formulation of the selected robust forward--backward variational system}

\begin{remark}[Origin of the variational term]
\label{rem:variational-origin}
The term \(A_{\mathrm{ALA}}(X_t^P,\Upsilon(\mu_t^P))\) introduced in the backward dynamics represents the variational component of the model. It acts as a correction mechanism that penalizes trajectories incompatible with the collective preference described by the asymmetric loss aversion functional. This signal is evaluated at the forward state \(X^P\), not at the backward variable \(Y^P\). This structure constitutes one of the main specificities of the proposed model.
\end{remark}

For $P\in\mathcal{P}$, the selected robust forward--backward variational system is written as follows:
\begin{equation}\label{eq:system_robuste}
\begin{cases}
dX_t^P = b\bigl(t,X_t^P, M_t^{\varphi,P}, u_t\bigr)dt + \sigma\bigl(t,X_t^P, M_t^{\beta,P}, u_t\bigr)dB_t,\\[4pt]
-dY_t^P =
\Bigl[f\bigl(t,X_t^P, M_t^{\gamma,P}, Y_t^P, Z_t^P, M_t^{\delta,P}, u_t\bigr) + \rho\,\Gamma_t^P\Bigr]dt - Z_t^P dB_t,\\[4pt]
X_0^P=\xi,\quad Y_T^P=g\bigl(X_T^P, M_T^{\lambda,P}\bigr),\quad \mu_t^P=\mathcal{L}^P(X_t^P),\\[4pt]
\Gamma_t^P\in A_{\mathrm{ALA}}\bigl(X_t^P,\Upsilon(\mu_t^P)\bigr).
\end{cases}
\end{equation}
The inclusion only concerns the monotone signal \(\Gamma^P\), not the backward variable \(Y^P\). The canonical version used for uniqueness is obtained by taking
\[
\Gamma_t^P=A_{\mathrm{ALA}}^0\bigl(X_t^P,\Upsilon(\mu_t^P)\bigr),
\]
the element of minimal norm in \(A_{\mathrm{ALA}}(X_t^P,\Upsilon(\mu_t^P))\).

\begin{remark}
The system \eqref{eq:system_robuste} is studied under each admissible probability $P\in\mathcal P$ considered separately. The analysis developed in this paper therefore does not require the full theory of 2BSDEs in the sense of Soner--Touzi--Zhang. The results are established uniformly in $P$ and then aggregated at the level of the family $\mathcal P$.
\end{remark}

\subsection{Model coefficients and assumptions}

\subsubsection{Model coefficients}

Throughout the paper, we consider the measurable deterministic coefficients

\[
b : [0,T] \times \mathbb{R}^{d} \times \mathbb{R}^{d_\varphi} \times U \longrightarrow \mathbb R^d,
\]
\[
\sigma : [0,T] \times \mathbb{R}^{d} \times \mathbb{R}^{d_\beta} \times U \longrightarrow \mathbb R^{d \times m},
\]
\[
f : [0,T] \times \mathbb{R}^{d} \times \mathbb{R}^{d_\gamma} \times \mathbb R \times \mathbb R^{m} \times \mathbb{R}^{d_\delta} \times U \longrightarrow \mathbb R,
\]
as well as the terminal cost function
\[
g : \mathbb R^{d} \times \mathbb{R}^{d_\lambda} \longrightarrow \mathbb R.
\]

\subsubsection{Assumptions}

The Lipschitz constants are uniform in $P\in\mathcal{P}$.

\begin{hypothesis}[Initial data]\label{H1}
$\xi$ is $\mathcal{F}_0$-measurable, $\sup_P\mathbb{E}^P[|\xi|^2]<\infty$.
\end{hypothesis}

\begin{hypothesis}[Fourth moment]\label{H1bis}
\[
\sup_{P\in\mathcal P}\mathbb E^P[|\xi|^4]<\infty .
\]
\end{hypothesis}

\begin{hypothesis}[Drift and diffusion]\label{H2}
$b,\sigma$ are measurable, Lipschitz in $(x, m_\varphi, m_\beta)$, and have linear growth in $|x|+|m_\varphi|+|m_\beta|+|u|$, where $m_\varphi\in\R^{d_\varphi}$ and $m_\beta\in\R^{d_\beta}$.
\end{hypothesis}

\begin{hypothesis}[Backward generator]\label{H3}
$f$ is measurable, Lipschitz in $(x, m_\gamma, y, z, m_\delta)$, and has linear growth in $|x|+|m_\gamma|+|y|+|z|+|m_\delta|$, where $m_\gamma\in\R^{d_\gamma}$ and $m_\delta\in\R^{d_\delta}$.
\end{hypothesis}

\begin{hypothesis}[Terminal condition]\label{H4}
$g$ is Lipschitz in $(x, m_\lambda)$, with $|g(x,m_\lambda)|\le C_g(1+|x|+|m_\lambda|)$, where $m_\lambda\in\R^{d_\lambda}$.
\end{hypothesis}

\begin{hypothesis}[Monotonicity of $f$]\label{H5}
$\exists\gamma>0$ such that $\langle f(y,z)-f(y',z), y-y'\rangle\le -\gamma|y-y'|^2$, uniformly.
\end{hypothesis}

\begin{hypothesis}[Fourth-order integrability of the control]\label{Hu4}
\[
\sup_{P\in\mathcal P} \mathbb E^P\left[\int_0^T |u_t|^4 dt\right] < \infty.
\]
\end{hypothesis}

\begin{hypothesis}[Peng--Wu type monotone domination]\label{H6}
There exist constants $\varepsilon>0$, $\gamma>0$, $\rho>0$ such that:
\begin{itemize}
\item the operator $A_{\mathrm{ALA}}$ is strongly monotone with constant $\varepsilon$;
\item the generator $f$ satisfies the monotonicity assumption \ref{H5} with constant $\gamma$;
\item the Lipschitz constants of the coefficients $b$, $\sigma$ and $f$ are sufficiently small relative to the total dissipation induced by $\rho\varepsilon$ and $\gamma$.
\end{itemize}
More precisely, there exists a universal constant $C_{\mathrm{PW}}>0$, depending only on the Peng--Wu energy calculations, such that
\begin{equation}\label{eq:H6}
\rho\varepsilon+\gamma \;>\; C_{\mathrm{PW}}\bigl( L_b^2+L_\sigma^2+L_f^2+1 \bigr).
\end{equation}
This assumption will be used only in the analysis of the coupled backward component (Section 4). The results of Section 3 (existence and uniqueness of the forward component) require only assumptions {\rm(H1)}--{\rm(H2)}.
\end{hypothesis}

\begin{remark}\label{rem:PW-monotonicity}
Assumption \ref{H6} is a classical monotone domination condition in the theory of Peng--Wu type monotone FBSDE. It allows closing the coupled energy estimates obtained after applying It\^o's formula to a functional of the type
\[
|\delta Y_t|^2 + \alpha |\delta X_t|^2 + 2\beta\langle\delta X_t,\delta Y_t\rangle,
\]
and guarantees the absorption of terms arising from the Lipschitz constants of the system coefficients.
\end{remark}

Throughout the paper, generic constants $C$ may vary from line to line. Constants $C_T$ depend on $T$, the coefficients and the parameters of the model, but are independent of $P\in\mathcal P$, $N$ and $\lambda$. The constants $C_{FG}$ and $C_{prop}$ are respectively associated with the Fournier--Guillin theorem and the propagation of chaos estimates.

\section{Existence, uniqueness and stability of the robust forward dynamics}

We analyze the robust forward component by first freezing the collective observables, then reconstructing the law dependence by a fixed point in the Wasserstein space. This strategy provides the existence, uniqueness and stability of the McKean--Vlasov dynamics under constants uniform in $P\in\mathcal P$.

\subsection{Frozen forward equation}

Let $M^\varphi=(M_t^\varphi)_{0\le t\le T}$ and $M^\beta=(M_t^\beta)_{0\le t\le T}$ be given processes, where for each $t$, $M_t^\varphi\in\R^{d_\varphi}$ and $M_t^\beta\in\R^{d_\beta}$ are fixed, with $\sup_t(|M_t^\varphi|+|M_t^\beta|)<\infty$. Under assumptions {\rm(H1)--(H2)}, the coefficients are progressively measurable, Lipschitz in $x$ and have linear growth. The classical existence and uniqueness theorem for Lipschitz SDEs (see \cite{CarmonaDelarue2018I}) then implies the existence of a unique strong solution
\[
X_t^{P,M^\varphi,M^\beta} = \xi + \int_0^t b\bigl(s,X_s^{P,M^\varphi,M^\beta}, M_s^\varphi, u_s\bigr)ds + \int_0^t \sigma\bigl(s,X_s^{P,M^\varphi,M^\beta}, M_s^\beta, u_s\bigr)dB_s \tag{3.1}\label{eq:frozen-forward}
\]
in $L^2(P;C([0,T];\mathbb{R}^d))$. The uniform estimate
\[
\sup_{P\in\mathcal{P}}\mathbb{E}^P\Big[\sup_{0\le t\le T}|X_t^{P,M^\varphi,M^\beta}|^2\Big]
\le C_T\Big(1+\sup_P\mathbb{E}^P[|\xi|^2]+\sup_P\mathbb{E}^P\Big[\int_0^T|u_t|^2dt\Big]+\sup_t(|M_t^\varphi|^2+|M_t^\beta|^2)\Big) \tag{3.2}\label{eq:frozen-estimate}
\]
follows directly from the Lipschitz and linear growth assumptions.

\subsection{Construction of the McKean--Vlasov fixed point}

The analysis of the frozen forward equation constitutes the first step of the construction. To reintroduce the law dependence characteristic of McKean--Vlasov systems, it remains to solve a fixed point equation in the space of probability flows.

For each admissible probability $P\in\mathcal P$, we consider the space
\[
\mathcal M
=
\Bigl\{
\mu=(\mu_t)_{0\le t\le T}
\subset \mathcal P_2(\mathbb R^d)
:
t\mapsto\mu_t
\ \text{continuous for }W_2
\Bigr\},
\]
equipped with the distance
\[
d_{\mathcal M}(\mu,\nu)
=
\sup_{0\le t\le T}
W_2(\mu_t,\nu_t).
\]

\begin{lemma}[Completeness of the space of measure flows]
\label{lem:completeness-M}

The space $(\mathcal M,d_{\mathcal M})$ is a complete metric space.
\end{lemma}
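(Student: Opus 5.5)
The plan rests on one classical fact: $(\mathcal P_2(\mathbb R^d),W_2)$ is a complete metric space (see \cite{Villani2009,AmbrosioGigliSavare2005}). From it we show that $\mathcal M$ is a closed subset of the space of bounded maps $[0,T]\to\mathcal P_2(\mathbb R^d)$ under the uniform distance. This is the same argument that shows continuous functions into a complete metric space form a complete space under the sup distance.

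First we check that $d_{\mathcal M}$ is a genuine finite metric on $\mathcal M$. For $\mu\in\mathcal M$ fixed, the map $t\mapsto W_2(\mu_t,\delta_0)$ is continuous on the compact interval $[0,T]$, hence bounded. By the triangle inequality for $W_2$, $\sup_t W_2(\mu_t,\nu_t)\le \sup_t W_2(\mu_t,\delta_0)+\sup_t W_2(\nu_t,\delta_0)<\infty$. Symmetry, the triangle inequality and separation then follow pointwise in $t$ from the corresponding properties of $W_2$.

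Next, let $(\mu^n)$ be Cauchy for $d_{\mathcal M}$. For each fixed $t$, the sequence $(\mu^n_t)$ is Cauchy in $(\mathcal P_2,W_2)$, so it converges to some $\mu_t\in\mathcal P_2(\mathbb R^d)$. Fix $\eta>0$ and choose $n_0$ with $W_2(\mu^n_s,\mu^m_s)\le\eta$ for all $s$ and all $n,m\ge n_0$. Letting $m\to\infty$ gives $\sup_s W_2(\mu^n_s,\mu_s)\le\eta$ for $n\ge n_0$, so the convergence is uniform. Continuity of the limit follows from the $\eta/3$ argument:
\[
W_2(\mu_t,\mu_s)\le W_2(\mu_t,\mu^{n}_t)+W_2(\mu^{n}_t,\mu^{n}_s)+W_2(\mu^{n}_s,\mu_s)\le 2\eta+W_2(\mu^{n}_t,\mu^{n}_s),
\]
and the last term is small for $s$ close to $t$ because $\mu^{n}$ is $W_2$-continuous. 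Hence $\mu\in\mathcal M$ and $d_{\mathcal M}(\mu^n,\mu)\to0$.

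The only nontrivial ingredient is the completeness of $(\mathcal P_2,W_2)$ itself, which we would cite rather than reprove. If a self-contained argument were wanted, we would proceed as follows. A $W_2$-Cauchy sequence has uniformly bounded second moments, and is uniformly $2$-integrable because $W_2$-closeness controls the tails. This gives tightness, so by Prokhorov a subsequence converges weakly. Lower semicontinuity of $W_2$ under weak convergence then gives $W_2$-convergence of the whole sequence. Since the space $\mathcal M$ does not depend on $P$, the lemma holds uniformly for every $P\in\mathcal P$.
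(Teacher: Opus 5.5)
Your proof is correct and follows the paper's argument essentially step for step: pointwise limits come from completeness of $(\mathcal P_2(\mathbb R^d),W_2)$, uniform convergence from letting $m\to\infty$ in the Cauchy bound, and $W_2$-continuity of the limit from the three-term triangle inequality. Your preliminary check that $d_{\mathcal M}$ is finite, via boundedness of $t\mapsto W_2(\mu_t,\delta_0)$ on $[0,T]$, is a small point the paper leaves implicit and is worth keeping.
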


\begin{proof}
Let $(\mu^n)_{n\ge1}$ be a Cauchy sequence in $(\mathcal M,d_{\mathcal M})$. Then, for every $t\in[0,T]$, the sequence $(\mu_t^n)_{n\ge1}$ is Cauchy in $(\mathcal P_2(\mathbb R^d),W_2)$. Since $(\mathcal P_2(\mathbb R^d),W_2)$ is complete, there exists, for each $t\in[0,T]$, a measure $\mu_t\in\mathcal P_2(\mathbb R^d)$ such that $W_2(\mu_t^n,\mu_t)\longrightarrow0$.

It remains to show that the convergence is uniform in time and that the limiting flow is continuous. Since $(\mu^n)$ is Cauchy in $d_{\mathcal M}$, for every $\varepsilon>0$, there exists $N_\varepsilon\ge1$ such that, for all $n,m\ge N_\varepsilon$, $\sup_{0\le t\le T} W_2(\mu_t^n,\mu_t^m) \le \varepsilon$. Letting $m\to\infty$, we obtain, for every $n\ge N_\varepsilon$, $\sup_{0\le t\le T} W_2(\mu_t^n,\mu_t) \le \varepsilon$. Thus $d_{\mathcal M}(\mu^n,\mu)\longrightarrow0$.

It remains to verify that $t\mapsto\mu_t$ is continuous for $W_2$. Let $s,t\in[0,T]$. For every $n$,
\[
W_2(\mu_t,\mu_s)
\le
W_2(\mu_t,\mu_t^n)
+
W_2(\mu_t^n,\mu_s^n)
+
W_2(\mu_s^n,\mu_s).
\]
Fix $\varepsilon>0$. First choose $n$ large enough so that $\sup_{r\in[0,T]}W_2(\mu_r^n,\mu_r) \le \varepsilon$. Since $\mu^n\in\mathcal M$, the map $t\mapsto\mu_t^n$ is continuous for $W_2$. There exists therefore $\delta>0$ such that, if $|t-s|<\delta$, then $W_2(\mu_t^n,\mu_s^n)\le\varepsilon$. Consequently, $W_2(\mu_t,\mu_s)\le 3\varepsilon$. The continuity of $t\mapsto\mu_t$ is proved. Thus $\mu\in\mathcal M$, and $(\mathcal M,d_{\mathcal M})$ is complete.
\end{proof}

Let now $\mu\in\mathcal M$ be a flow of probabilities. We define the associated collective observables
\[
M_t^{\varphi,\mu}
=
\Psi_\varphi(\mu_t)
=
\int_{\mathbb R^d}\varphi(x)\,\mu_t(dx),
\qquad
M_t^{\beta,\mu}
=
\Psi_\beta(\mu_t)
=
\int_{\mathbb R^d}\beta(x)\,\mu_t(dx).
\]

The solution of the frozen forward equation \eqref{eq:frozen-forward} associated with these observables will be denoted $X^{P,\mu}$. We then define the map
\[
\mathcal T^P:\mathcal M\longrightarrow\mathcal M
\]
by
\[
\mathcal T^P(\mu)_t
=
\mathcal L^P(X_t^{P,\mu}),
\qquad
0\le t\le T.
\]

Thus, a fixed point of \(\mathcal T^P\) corresponds exactly to a solution of the robust McKean--Vlasov equation.

\begin{lemma}[Invariance of the space of measure flows]
\label{lem:invariance}

Assume that {\rm(H1)} and {\rm(H2)} are satisfied. Then, for every $P\in\mathcal P$,
\[
\mathcal T^P(\mathcal M) \subset \mathcal M.
\]

\end{lemma}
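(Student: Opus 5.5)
To prove Lemma~\ref{lem:invariance}, we fix $P\in\mathcal P$ and $\mu\in\mathcal M$ and check the two defining properties of $\mathcal M$ for the flow $t\mapsto\mathcal L^P(X_t^{P,\mu})$: each marginal lies in $\mathcal P_2(\mathbb R^d)$, and the flow is continuous for $W_2$.

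\textbf{Step 1: the frozen observables are admissible.} We first show that $M^{\varphi,\mu}$ and $M^{\beta,\mu}$ are bounded in time, so that \eqref{eq:frozen-forward} and \eqref{eq:frozen-estimate} apply. Since $\varphi$ is Lipschitz, $|\varphi(x)|\le|\varphi(0)|+L_\varphi|x|$, and hence
\[
|M_t^{\varphi,\mu}|\le|\varphi(0)|+L_\varphi\Bigl(\int|x|^2\mu_t(dx)\Bigr)^{1/2}.
\]
The second moment satisfies $\int|x|^2\mu_t(dx)=W_2^2(\mu_t,\delta_0)$, and $t\mapsto W_2(\mu_t,\delta_0)$ is continuous on the compact interval $[0,T]$, so it is bounded. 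The same argument applies to $\beta$. Alternatively, Lemma~\ref{lem:stability-observables} gives the Lipschitz bound $|M^{\varphi,\mu}_t-M^{\varphi,\mu}_s|\le L_\varphi W_2(\mu_t,\mu_s)$, which shows that the observables are continuous and bounded. Then \eqref{eq:frozen-estimate}, together with (H1) and the integrability of $u\in\mathcal U$, yields
\[
\mathbb E^P\Bigl[\sup_t|X_t^{P,\mu}|^2\Bigr]<\infty .
\]
In particular $\mathcal L^P(X_t^{P,\mu})\in\mathcal P_2(\mathbb R^d)$ for every $t$.

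\textbf{Step 2: time continuity.} For $s<t$, the synchronous coupling $(X_s^{P,\mu},X_t^{P,\mu})$ gives
\[
W_2^2\bigl(\mathcal T^P(\mu)_t,\mathcal T^P(\mu)_s\bigr)\le\mathbb E^P\bigl[|X_t^{P,\mu}-X_s^{P,\mu}|^2\bigr].
\]
We bound the increment by
\[
2\,\mathbb E^P\Bigl[\Bigl|\int_s^t b\,dr\Bigr|^2\Bigr]+2\,\mathbb E^P\Bigl[\Bigl|\int_s^t\sigma\,dB_r\Bigr|^2\Bigr].
\]
For the drift, Cauchy--Schwarz and linear growth give at most
\[
C(t-s)\int_s^t\bigl(1+\mathbb E^P|X_r|^2+|M_r^{\varphi}|^2+\mathbb E^P|u_r|^2\bigr)dr .
\]
For the stochastic integral, the Itô isometry under $P=P^a$ with $a_r\le\overline a$ gives at most
\[
C\int_s^t\bigl(1+\mathbb E^P|X_r|^2+|M_r^\beta|^2+\mathbb E^P|u_r|^2\bigr)dr .
\]

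\textbf{Main obstacle.} The control $u$ is only square-integrable in time, not bounded, so the increment is not $O(t-s)$. We handle this by absolute continuity of the Lebesgue integral. The function $r\mapsto 1+\mathbb E^P|X_r|^2+|M_r|^2+\mathbb E^P|u_r|^2$ is integrable on $[0,T]$, by Step 1 and because $u\in\mathcal U$. Hence $\int_s^t(\cdots)\,dr\to0$ as $|t-s|\to0$, and both bounds above vanish. This gives continuity of $t\mapsto\mathcal T^P(\mu)_t$ in $W_2$, so $\mathcal T^P(\mu)\in\mathcal M$. If (Hu4) is assumed, Hölder's inequality gives an explicit modulus of order $|t-s|^{1/2}$, but this refinement is not needed for the lemma.
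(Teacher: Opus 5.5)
Your proof is correct and follows the same route as the paper: the second-moment bound from the frozen estimate \eqref{eq:frozen-estimate} gives $\mathcal T^P(\mu)_t\in\mathcal P_2(\mathbb R^d)$, and the synchronous coupling $W_2^2(\mathcal T^P(\mu)_t,\mathcal T^P(\mu)_s)\le\mathbb E^P|X_t-X_s|^2$ gives time continuity. You also justify two points the paper only asserts: that the frozen observables $\Psi_\varphi(\mu_t)$ and $\Psi_\beta(\mu_t)$ are bounded, so that \eqref{eq:frozen-estimate} actually applies, and that $\mathbb E^P|X_t-X_s|^2\to0$ even though $u$ is only square-integrable in time.
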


\begin{proof}
Let $\mu\in\mathcal{M}$ and $P\in\mathcal{P}$ be fixed. Denote $X = X^{P,\mu}$ the solution of \eqref{eq:frozen-forward} with $M_t^\varphi = \Psi_\varphi(\mu_t)$ and $M_t^\beta = \Psi_\beta(\mu_t)$.

\medskip
\noindent\textbf{Verification of membership in $\mathcal{P}_2(\mathbb{R}^d)$.}
For every $t\in[0,T]$, $\mathcal T^P(\mu)_t = \mathcal{L}^P(X_t)$ is a probability measure on $\mathbb{R}^d$. By estimate \eqref{eq:frozen-estimate},
\[
\int |x|^2 \,\mathcal T^P(\mu)_t(dx) = \mathbb{E}^P[|X_t|^2] \le \mathbb{E}^P\Big[\sup_{0\le s\le T}|X_s|^2\Big] \le R_0^2 < \infty.
\]
Thus, $\mathcal T^P(\mu)_t \in \mathcal{P}_2(\mathbb{R}^d)$ for every $t$.

\medskip
\noindent\textbf{Verification of time continuity for $W_2$.}
Let $s,t\in[0,T]$. By the Wasserstein inequality and the $L^2$-continuity of trajectories,
\[
W_2^2(\mathcal T^P(\mu)_t, \mathcal T^P(\mu)_s) \le \mathbb{E}^P[|X_t - X_s|^2].
\]
As $t\to s$, we have $\mathbb{E}^P[|X_t - X_s|^2] \to 0$ because $X$ is a solution of an SDE with Lipschitz coefficients. Continuity is therefore established.

\medskip
\noindent\textbf{Uniform control of $W_2(\mathcal T^P(\mu)_t,\delta_0)$.}
By estimate \eqref{eq:frozen-estimate},
\[
W_2^2(\mathcal T^P(\mu)_t,\delta_0) \le \mathbb{E}^P[|X_t|^2] \le R_0^2,
\]
so $\sup_t W_2(\mathcal T^P(\mu)_t,\delta_0) \le R_0$.

\medskip
\noindent\textbf{Conclusion.}
The three points show that $\mathcal T^P(\mu) \in \mathcal{M}$, which completes the proof.
\end{proof}

The following result constitutes the decisive point of the fixed point construction.

\begin{proposition}[Uniform contraction]
\label{prop:contraction-uniforme}

Assume that assumptions {\rm(H1)}--{\rm(H2)} are satisfied. Then there exists a time horizon $T_0>0$ and a constant $\kappa\in(0,1)$, independent of $P\in\mathcal P$, such that
\[
d_{\mathcal M}\bigl( \mathcal T^P(\mu), \mathcal T^P(\nu) \bigr) \le \kappa\, d_{\mathcal M}(\mu,\nu)
\]
for all $\mu,\nu\in\mathcal M$ on the interval $[0,T_0]$.

\end{proposition}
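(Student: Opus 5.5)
We couple the two frozen solutions synchronously and estimate their difference. Fix $P\in\mathcal P$ and $\mu,\nu\in\mathcal M$. Let $X=X^{P,\mu}$ and $X'=X^{P,\nu}$ be the solutions of \eqref{eq:frozen-forward} driven by the observables $(\Psi_\varphi(\mu_t),\Psi_\beta(\mu_t))$ and $(\Psi_\varphi(\nu_t),\Psi_\beta(\nu_t))$ respectively. Both use the same initial datum $\xi$, the same control $u$ and the same canonical process $B$ under $P$. This pair $(X_t,X'_t)$ is an admissible coupling of $\mathcal T^P(\mu)_t$ and $\mathcal T^P(\nu)_t$, so
\[
W_2^2\bigl(\mathcal T^P(\mu)_t,\mathcal T^P(\nu)_t\bigr)\le \mathbb E^P\bigl[|X_t-X'_t|^2\bigr],
\]
and it suffices to bound $\mathbb E^P[\sup_{s\le t}|X_s-X'_s|^2]$.

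Write $\delta X=X-X'$. We split the drift difference as
\[
b(s,X_s,\Psi_\varphi(\mu_s),u_s)-b(s,X'_s,\Psi_\varphi(\nu_s),u_s),
\]
and similarly for $\sigma$. By (H2) and Lemma~\ref{lem:stability-observables}, its modulus is at most $L_b\bigl(|\delta X_s|+L_\varphi W_2(\mu_s,\nu_s)\bigr)$, and the diffusion difference is at most $L_\sigma\bigl(|\delta X_s|+L_\beta W_2(\mu_s,\nu_s)\bigr)$. The stochastic integral is handled under $P=P^a$, where $d\langle B\rangle_s=a_s\,ds$ with $a_s\le\overline a$. The Burkholder--Davis--Gundy inequality then bounds $\mathbb E^P[\sup_{r\le t}|\int_0^r(\cdot)\,dB_s|^2]$ by $C_{BDG}\,|\overline a|\,\mathbb E^P\int_0^t|\Delta\sigma_s|^2ds$. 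This constant is the only place where $P$ enters, and it is uniform over $\mathcal P$. Combining this with Cauchy--Schwarz on the drift term ($(\int_0^t)^2\le t\int_0^t$), we get
\[
\phi(t):=\mathbb E^P\Bigl[\sup_{s\le t}|\delta X_s|^2\Bigr]\le K(t+1)\int_0^t\phi(s)\,ds+K(t+1)\,t\,d_{\mathcal M}(\mu,\nu)^2 .
\]
Here $K$ depends only on $L_b,L_\sigma,L_\varphi,L_\beta,\overline a$ and the BDG constant, not on $P$. The control and $\xi$ cancel in the difference.

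Gronwall's lemma on $[0,T_0]$ gives
\[
\sup_{t\le T_0}\phi(t)\le K(T_0+1)T_0\,e^{K(T_0+1)T_0}\,d_{\mathcal M}(\mu,\nu)^2 .
\]
Taking square roots yields the contraction constant $\kappa=\bigl(K(T_0+1)T_0e^{K(T_0+1)T_0}\bigr)^{1/2}$. This tends to $0$ as $T_0\downarrow0$, so we choose $T_0$ small enough that $\kappa<1$. The choice is independent of $P$ because $K$ is. Note that $\phi$ is finite a priori by \eqref{eq:frozen-estimate}, which is what legitimates the use of Gronwall.

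The main point requiring care is the uniformity in $P$ of the stochastic-integral bound under the non-dominated family. Each $P^a$ must be treated separately, with BDG applied to a genuine $P$-martingale whose quadratic variation is controlled through $\underline a\le a\le\overline a$ and the weighted norm $|\cdot|_{a_t}$ of Definition~\ref{def:H2P}. One should also check that the frozen solutions are well defined under every $P$ simultaneously, but the contraction itself is computed for fixed $P$ with $P$-independent constants, as the statement requires.
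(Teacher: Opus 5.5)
Your proposal is correct and follows essentially the same route as the paper. The shared steps are a synchronous coupling of the two frozen solutions under a fixed $P=P^a$, a BDG bound whose constant is uniform over $\mathcal P$, the Lipschitz estimates combined with Lemma~\ref{lem:stability-observables}, Gronwall on $[0,T_0]$, and a choice of $T_0$ small enough. Your bookkeeping is in fact slightly more careful than the paper's: you keep the factor $t$ from Cauchy--Schwarz on the drift, you make the dependence on $\overline a$ explicit, and you take a square root so that $\kappa$ is a contraction constant for $d_{\mathcal M}$ rather than for $d_{\mathcal M}^2$.
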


\begin{proof}
Let $\mu, \nu \in \mathcal{M}$ and $P \in \mathcal{P}$ be fixed. Denote $X = X^{P,\mu}$ and $\bar X = X^{P,\nu}$ the two solutions of the frozen forward equation \eqref{eq:frozen-forward} associated respectively with the flows $\mu$ and $\nu$, with observables $M_t^{\varphi,\mu}=\Psi_\varphi(\mu_t)$, $M_t^{\beta,\mu}=\Psi_\beta(\mu_t)$, and similarly for $\nu$. Their differences satisfy
\[
\begin{aligned}
\delta X_t
&:= X_t-\bar X_t\\
&= \int_0^t
\bigl[
b(s,X_s,M_s^{\varphi,\mu},u_s)
-b(s,\bar X_s,M_s^{\varphi,\nu},u_s)
\bigr]\,ds\\
&\quad
+ \int_0^t
\bigl[
\sigma(s,X_s,M_s^{\beta,\mu},u_s)
-\sigma(s,\bar X_s,M_s^{\beta,\nu},u_s)
\bigr]\,dB_s.
\end{aligned}
\]

\medskip
\noindent\textbf{Energy inequality.}
We apply It\^o's formula to $|\delta X_t|^2$ and take expectation under $P$. The stochastic term yields, via the Burkholder--Davis--Gundy inequality uniform in $P$ (see \cite{SonerTouziZhang2011}, Lemma 3.1):
\[
\mathbb{E}^P\!\left[\sup_{0 \le r \le t}|\delta X_r|^2\right]
\;\le\; C_{\mathrm{BDG}}\,\mathbb{E}^P\!\left[\int_0^t
\bigl(|\delta b_s|^2 + |\delta\sigma_s|^2\bigr)\,ds\right],
\]
where the constant $C_{\mathrm{BDG}}$ depends only on the dimension and is uniform over the non-dominated family $\mathcal P$. By (H2), $b$ and $\sigma$ are $L$-Lipschitz. We therefore obtain
\[
|\delta b_s|^2 + |\delta\sigma_s|^2
\;\le\; 2L^2\bigl(|\delta X_s|^2 + |M_s^{\varphi,\mu}-M_s^{\varphi,\nu}|^2 + |M_s^{\beta,\mu}-M_s^{\beta,\nu}|^2\bigr).
\]

By Lemma~\ref{lem:stability-observables},
\[
|M_s^{\varphi,\mu}-M_s^{\varphi,\nu}| \le L_\varphi W_2(\mu_s,\nu_s),\qquad
|M_s^{\beta,\mu}-M_s^{\beta,\nu}| \le L_\beta W_2(\mu_s,\nu_s).
\]

Thus,
\[
|\delta b_s|^2 + |\delta\sigma_s|^2 \le 2L^2\bigl(|\delta X_s|^2 + (L_\varphi^2+L_\beta^2) W_2^2(\mu_s,\nu_s)\bigr).
\]

Substituting and setting $\Phi(t) = \mathbb{E}^P[\sup_{0 \le r \le t} |\delta X_r|^2]$,
\[
\Phi(t) \;\le\; 2L^2 C_{\mathrm{BDG}} \int_0^t \Phi(s)\,ds
            + 2L^2 C_{\mathrm{BDG}} (L_\varphi^2+L_\beta^2) \int_0^t \sup_{0\le r\le s} W_2^2(\mu_r,\nu_r)\,ds. \tag{3.3}\label{eq:forward-contraction-integral}
\]

\medskip
\noindent\textbf{Gronwall inequality.}
Applying Gronwall's lemma to \eqref{eq:forward-contraction-integral} on $[0, T_0]$ gives
\[
\Phi(T_0) \;\le\; 2L^2 C_{\mathrm{BDG}} (L_\varphi^2+L_\beta^2) T_0\, e^{2L^2 C_{\mathrm{BDG}} T_0}
\cdot \sup_{0 \le t \le T_0} W_2^2(\mu_t,\nu_t). \tag{3.4}\label{eq:forward-contraction-gronwall}
\]

\medskip
\noindent\textbf{Passage to the Wasserstein distance.}
For every $t \in [0, T_0]$, the natural coupling $(X_t, \bar X_t)$ is admissible for $W_2(\mathcal{L}^P(X_t), \mathcal{L}^P(\bar X_t))$, hence
\[
W_2^2\!\bigl(\mathcal{L}^P(X^{P,\mu}_t),\,\mathcal{L}^P(X^{P,\nu}_t)\bigr)
\;\le\; \mathbb{E}^P[|\delta X_t|^2]
\;\le\; \mathbb{E}^P\!\left[\sup_{0 \le r \le t}|\delta X_r|^2\right].
\]

Taking the supremum over $t \in [0, T_0]$, we conclude via \eqref{eq:forward-contraction-gronwall}:
\[
\sup_{0 \le t \le T_0}
W_2^2\!\bigl(\mathcal{L}^P(X^{P,\mu}_t),\,\mathcal{L}^P(X^{P,\nu}_t)\bigr)
\;\le\; \kappa(T_0)\cdot \sup_{0 \le t \le T_0} W_2^2(\mu_t, \nu_t),
\]
with
\[
\kappa(T_0) := 2L^2 C_{\mathrm{BDG}} (L_\varphi^2+L_\beta^2) T_0\, e^{2L^2 C_{\mathrm{BDG}} T_0}.
\]

The function $T_0 \mapsto \kappa(T_0)$ is continuous, zero at $T_0 = 0$, and strictly increasing. There exists therefore $T_0 > 0$ small enough, \textbf{depending only on $L$, $L_\varphi$, $L_\beta$ and $C_{\mathrm{BDG}}$} (and not on $P$), such that $\kappa(T_0) < 1$. The constant $\kappa = \kappa(T_0) \in (0,1)$ is indeed \textbf{uniform in $P \in \mathcal{P}$}.
\end{proof}

\subsection{Existence and global uniqueness of the robust forward dynamics}

\begin{lemma}[Pasting of local fixed points]
\label{lem:pasting}

Let \(T_0>0\) be such that the McKean--Vlasov map \(\mathcal T^P\) is contractive on every interval of length \(T_0\), with a contraction constant independent of \(P\in\mathcal P\). Assume that, for each \(k=0,\ldots,n-1\), there exists a unique local fixed point \(\mu^{(k)}\) of \(\mathcal T^P\) on the interval
\[
I_k=[kT_0,(k+1)T_0]\cap[0,T],
\]
constructed with the initial condition transmitted by the previous solution. Then the local flows \(\mu^{(k)}\) paste together into a unique global flow \(\mu^P\in\mathcal M\) on \([0,T]\), which is a fixed point of \(\mathcal T^P\).

\end{lemma}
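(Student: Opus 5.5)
We define the global flow by concatenation and check three things: it is well defined, it lies in \(\mathcal M\), and it is the unique fixed point of \(\mathcal T^P\) on \([0,T]\). The tool throughout is pathwise uniqueness of the frozen equation \eqref{eq:frozen-forward} together with the flow property of SDE solutions.

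\emph{Construction.} Set \(t_k=kT_0\wedge T\) and proceed by induction on \(k\).
\begin{itemize}
\item On \(I_0\), Proposition~\ref{prop:contraction-uniforme} and Lemma~\ref{lem:completeness-M} (Banach fixed point, applied to the restriction of \(\mathcal M\) to \(I_0\)) give \(\mu^{(0)}\) and the associated process \(X^{(0)}=X^{P,\mu^{(0)}}\) on \(I_0\).
\item On \(I_k\), we solve \eqref{eq:frozen-forward} started at time \(t_k\) from the random initial condition \(X^{(k-1)}_{t_k}\). This variable is \(\mathcal F_{t_k}\)-measurable and square integrable by \eqref{eq:frozen-estimate}, so the contraction argument, which never used the specific initial law, yields \(\mu^{(k)}\) and \(X^{(k)}\).
\item We set \(\mu^P_t=\mu^{(k)}_t\) and \(X_t=X^{(k)}_t\) for \(t\in I_k\).
\end{itemize}
This is consistent at the junction points: \(\mu^{(k)}_{t_k}=\mathcal L^P(X^{(k)}_{t_k})=\mathcal L^P(X^{(k-1)}_{t_k})=\mu^{(k-1)}_{t_k}\).

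\emph{Membership in \(\mathcal M\).} Each \(\mu^{(k)}\) is \(W_2\)-continuous on \(I_k\), and consecutive pieces agree at \(t_k\). A piecewise continuous function that matches at finitely many junction points is continuous, and finitely many second moments are bounded. Hence \(\mu^P\in\mathcal M\).

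\emph{Fixed point property.} By construction \(X\) is continuous and adapted and satisfies
\[
X_t=X_{t_k}+\int_{t_k}^t b\bigl(s,X_s,\Psi_\varphi(\mu^P_s),u_s\bigr)ds+\int_{t_k}^t\sigma\bigl(s,X_s,\Psi_\beta(\mu^P_s),u_s\bigr)dB_s
\]
on each \(I_k\). Summing these relations telescopically shows that \(X\) solves \eqref{eq:frozen-forward} on \([0,T]\) with frozen observables \(\Psi_\varphi(\mu^P),\Psi_\beta(\mu^P)\). By pathwise uniqueness of the frozen SDE, \(X=X^{P,\mu^P}\). Therefore \(\mathcal T^P(\mu^P)_t=\mathcal L^P(X_t)=\mu^P_t\).

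\emph{Uniqueness.} Let \(\nu\in\mathcal M\) be another fixed point on \([0,T]\), and write \(\bar X=X^{P,\nu}\). We show by induction that \(\nu=\mu^P\) on \(I_0\cup\dots\cup I_k\).
\begin{itemize}
\item On \(I_0\), the restriction of \(\nu\) is a fixed point of the contraction, so \(\nu=\mu^{(0)}\) there. The frozen equations then share the same observables on \(I_0\), and pathwise uniqueness gives \(\bar X=X\) on \(I_0\).
\item Assume \(\nu=\mu^P\) and \(\bar X=X\) on \([0,t_k]\). On \(I_k\), the restriction of \(\nu\) is a fixed point of the local map started from \(\bar X_{t_k}=X_{t_k}\). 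Local uniqueness gives \(\nu=\mu^{(k)}\) on \(I_k\), and pathwise uniqueness again gives \(\bar X=X\) on \(I_k\).
\end{itemize}

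\emph{Main obstacle.} The delicate step is making the local map on \(I_k\) precise. It must be defined with a random \(\mathcal F_{t_k}\)-measurable initial condition, not the original \(\xi\). One must also check that Proposition~\ref{prop:contraction-uniforme} applies verbatim on a shifted interval with the same \(T_0\) and \(\kappa\). This holds because the constants in \eqref{eq:forward-contraction-gronwall} depend only on \(L,L_\varphi,L_\beta,C_{\mathrm{BDG}}\) and the length of the interval, and not on the initial condition or the starting time. Since \(T_0\) is independent of \(P\), only \(n=\lceil T/T_0\rceil\) steps are needed, uniformly in \(P\).
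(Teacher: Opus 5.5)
Your proof is correct and follows essentially the same route as the paper: restart the frozen equation on each \(I_k\) from the transmitted value \(X^{(k-1)}_{kT_0}\), paste the local flows using their agreement at the junction times, and obtain global uniqueness by induction over the intervals via local uniqueness. Your explicit appeal to pathwise uniqueness and the flow property makes rigorous two points the paper only asserts: that the concatenated process coincides with \(X^{P,\mu^P}\) (the paper says ``by construction''), and that two fixed points transmit the same random variable \(\bar X_{t_k}=X_{t_k}\) at each junction, not merely the same law.
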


\begin{proof}
We construct the global flow \(\mu^P\) piecewise. On the first interval \(I_0=[0,T_0]\), the contraction of \(\mathcal T^P\) provides a unique local fixed point \(\mu^{(0)}\). Let \(X^{(0)}\) be the corresponding forward solution.

Assume now that the solution is constructed up to time $kT_0$. The a priori estimate of the forward equation gives $\mathbb E^P[|X_{kT_0}^{(k-1)}|^2]<\infty$. We can therefore restart the equation on the interval $I_k=[kT_0,(k+1)T_0]\cap[0,T]$ with initial condition $X_{kT_0}^{(k-1)}$. The coefficients keep the same Lipschitz and growth constants. The local contraction Proposition therefore applies again on $I_k$, with the same contraction constant.

We thus obtain a unique local fixed point $\mu^{(k)}$ on $I_k$. By construction, the initial condition of the problem on $I_k$ is exactly the terminal value of the solution constructed on $I_{k-1}$. Consequently,
\[
\mu^{(k)}_{kT_0} = \mu^{(k-1)}_{kT_0}.
\]

The local flows therefore paste together without discontinuity at the junction times.

We then define
\[
\mu_t^P = \mu_t^{(k)} \quad \text{if } t\in I_k.
\]

Since each $t\mapsto\mu_t^{(k)}$ is continuous for $W_2$, and since the values coincide at the junction points, the global flow $t\mapsto\mu_t^P$ is continuous for $W_2$. Thus $\mu^P\in\mathcal M$.

It remains to verify that $\mu^P$ is indeed a global fixed point. On each interval $I_k$, by construction,
\[
\mathcal T^P(\mu^P)_t = \mu_t^P, \qquad t\in I_k.
\]

Since the intervals $I_k$ cover $[0,T]$, we obtain
\[
\mathcal T^P(\mu^P)=\mu^P \quad \text{on }[0,T].
\]

Finally, global uniqueness follows from local uniqueness. Indeed, if $\nu^P$ is another global fixed point, then its restrictions to $I_0$ and those of $\mu^P$ are two fixed points of the same local contraction. They therefore coincide on $I_0$. In particular, the two solutions have the same value at time $T_0$. Restarting on $I_1$, local uniqueness gives equality on $I_1$. Iterating the argument, we obtain $\nu_t^P=\mu_t^P$, $0\le t\le T$.

The proof is complete.
\end{proof}

\begin{theorem}[Existence and uniqueness of the robust forward dynamics]
\label{thm:fixed-point-global}

Assume {\rm(H1)--(H2)}. Then, for each probability $P\in\mathcal P$, there exists a unique flow $\mu^P\in\mathcal M$ and a unique process $X^P$ solution of the McKean--Vlasov forward equation
\[
X_t^P = \xi + \int_0^t b\bigl(s,X_s^P, M_s^{\varphi,P}, u_s\bigr)ds + \int_0^t \sigma\bigl(s,X_s^P, M_s^{\beta,P}, u_s\bigr)dB_s,
\]
with $M_t^{\varphi,P} = \Psi_\varphi(\mu_t^P)$, $M_t^{\beta,P} = \Psi_\beta(\mu_t^P)$ and $\mu_t^P = \mathcal{L}^P(X_t^P)$.

\end{theorem}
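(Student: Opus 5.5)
The plan is to obtain the global solution by combining the uniform local contraction of Proposition~\ref{prop:contraction-uniforme} with the pasting Lemma~\ref{lem:pasting}. Uniqueness of the process then follows from uniqueness of the flow together with strong uniqueness for the frozen equation \eqref{eq:frozen-forward}.

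\textbf{Step 1 (local existence).} Fix \(P\in\mathcal P\). By Lemma~\ref{lem:invariance}, \(\mathcal T^P\) maps \(\mathcal M\) into itself. Restricted to flows on \([0,T_0]\), with the corresponding space still complete by Lemma~\ref{lem:completeness-M}, it is a strict contraction by Proposition~\ref{prop:contraction-uniforme}. The Banach fixed point theorem then yields a unique \(\mu^{(0)}\) on \([0,T_0]\). Note that \(T_0\) depends only on \(L,L_\varphi,L_\beta,C_{\mathrm{BDG}}\); in particular it depends neither on \(P\) nor on the initial law.

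\textbf{Step 2 (iteration).} On \(I_k=[kT_0,(k+1)T_0]\cap[0,T]\), I restart the frozen equation from the \(\mathcal F_{kT_0}\)-measurable square-integrable variable \(X^{(k-1)}_{kT_0}\). Square-integrability comes from \eqref{eq:frozen-estimate}. The contraction proof does not use \(t=0\) specifically; it only uses the Lipschitz constants and a BDG bound on an interval of length \(T_0\). Hence the same \(\kappa\) applies on \(I_k\), and after \(n=\lceil T/T_0\rceil\) steps Lemma~\ref{lem:pasting} produces a global flow \(\mu^P\in\mathcal M\) with \(\mathcal T^P(\mu^P)=\mu^P\). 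I then set \(X^P:=X^{P,\mu^P}\). The process pieced together on the intervals solves the frozen equation on \([0,T]\) with observables \(\Psi_\varphi(\mu^P_t),\Psi_\beta(\mu^P_t)\). By strong uniqueness of \eqref{eq:frozen-forward}, it coincides with \(X^{P,\mu^P}\). The fixed point property gives \(\mathcal L^P(X^P_t)=\mu^P_t\), so \(X^P\) solves the McKean--Vlasov equation.

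\textbf{Step 3 (uniqueness).} Let \((\tilde X,\tilde\mu)\) be another solution with \(\tilde\mu_t=\mathcal L^P(\tilde X_t)\).
\begin{enumerate}
\item First I check that \(\tilde\mu\in\mathcal M\). Second moments are finite by the a priori estimate, and \(W_2\)-continuity follows from \(W_2^2(\tilde\mu_t,\tilde\mu_s)\le\mathbb E^P|\tilde X_t-\tilde X_s|^2\to0\).
\item By strong uniqueness of the frozen SDE with observables built from \(\tilde\mu\), we have \(\tilde X=X^{P,\tilde\mu}\), so \(\tilde\mu\) is a fixed point of \(\mathcal T^P\).
\item The uniqueness part of Lemma~\ref{lem:pasting} gives \(\tilde\mu=\mu^P\). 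Applying strong uniqueness of the frozen equation once more yields \(\tilde X=X^P\), up to \(P\)-indistinguishability.
\end{enumerate}

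\textbf{Main obstacle.} The delicate point is justifying that the contraction in Step 2 holds on each subinterval \(I_k\) with a constant independent of both \(k\) and the restarted initial datum. I would handle this by rerunning the proof of Proposition~\ref{prop:contraction-uniforme} on \(I_k\) for two flows that agree on \([0,kT_0]\). Both frozen solutions start from the same variable \(X_{kT_0}\), so \(\delta X_{kT_0}=0\), and the Gronwall bound \eqref{eq:forward-contraction-gronwall} holds verbatim with time shifted by \(kT_0\). A secondary technical point is measurability: the restarted problem is posed with the filtration shifted to start at time \(kT_0\), and the BDG constant is unaffected by this shift.
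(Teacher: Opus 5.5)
Your proposal is correct and follows essentially the same route as the paper: a Banach fixed point for $\mathcal T^P$ on intervals of length $T_0$, with $T_0$ uniform in $P$ and in the restarted datum, then iteration and pasting over $\lceil T/T_0\rceil$ subintervals, and finally uniqueness by propagating local uniqueness from one subinterval to the next. Your additional checks simply make explicit steps the paper leaves implicit: that any solution's law flow lies in $\mathcal M$ and is a fixed point of $\mathcal T^P$ by strong uniqueness of the frozen equation, and that the contraction can be rerun on flows agreeing on $[0,kT_0]$.
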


\begin{proof}

The proof relies on the application of Banach's fixed point theorem on sufficiently short time subintervals, followed by a continuation argument.

\medskip
\noindent
\textbf{Local existence and uniqueness.}

By Lemma~\ref{lem:invariance}, the map $\mathcal T^P:\mathcal M\longrightarrow \mathcal M$ is well-defined. By Proposition~\ref{prop:contraction-uniforme}, there exists a time horizon $T_0>0$ and a constant $\kappa\in(0,1)$ such that
\[
d_{\mathcal M}\!\left( \mathcal T^P(\mu), \mathcal T^P(\nu) \right) \le \kappa\, d_{\mathcal M}(\mu,\nu)
\]
for all $\mu,\nu\in\mathcal M$ on the interval $[0,T_0]$. Since $(\mathcal M,d_{\mathcal M})$ is a complete metric space (Lemma~\ref{lem:completeness-M}), Banach's fixed point theorem implies the existence of a unique flow $\mu^{(1)} \in \mathcal M$ such that $\mathcal T^P(\mu^{(1)}) = \mu^{(1)}$ on $[0,T_0]$. We then set $X^{(1)} = X^{P,\mu^{(1)}}$.

\medskip
\noindent
\textbf{Extension of the solution.}

Let $X^{(1)} := X^{P,\mu^{(1)}}$. The uniform estimate obtained in \eqref{eq:frozen-estimate} ensures that $\sup_{0\le t\le T_0} \mathbb E^P[|X_t^{(1)}|^2] < \infty$. In particular, $X_{T_0}^{(1)} \in L^2(P)$.

We can therefore consider the same problem on the interval $[T_0,2T_0]$ with initial condition $X_{T_0}^{(1)}$. Assumptions {\rm(H1)}--{\rm(H2)} remain valid with this new initial condition. The Lipschitz constants are unchanged.

Consequently, Proposition~\ref{prop:contraction-uniforme} applies again on $[T_0,2T_0]$. Banach's theorem then provides a unique fixed point $\mu^{(2)}$ on this interval. Moreover, $\mu^{(2)}_{T_0} = \mu^{(1)}_{T_0}$, since both constructions use the same initial value $X_{T_0}^{(1)}$. The two solutions therefore paste together naturally.

\medskip
\noindent
\textbf{Iteration of the argument.}

We repeat the previous argument on the intervals $[kT_0,(k+1)T_0]$, $k=0,1,\dots,N-1$, where $N = \lceil T/T_0 \rceil$. At each iteration, the a priori estimate \eqref{eq:frozen-estimate} guarantees that the initial condition transmitted to the following subinterval belongs to $L^2(P)$. Banach's theorem then provides a unique local fixed point.

By successive pasting (Lemma~\ref{lem:pasting}), we obtain a flow $\mu^P = (\mu_t^P)_{0\le t\le T}$ defined on the whole interval $[0,T]$ and satisfying $\mathcal T^P(\mu^P) = \mu^P$. Thus, $\mu_t^P = \mathcal L^P(X_t^{P,\mu^P})$ for every $t\in[0,T]$. Existence is proved.

\medskip
\noindent
\textbf{Global uniqueness.}

Let \(\mu^P\) and \(\nu^P\) be two fixed points of \(\mathcal T^P\) on \([0,T]\), and let \(X^P\) and \(\tilde X^P\) be the corresponding forward processes. Their restrictions to \([0,T_0]\) are two fixed points of the same contraction. Uniqueness given by Banach's theorem implies \(\mu_t^P = \nu_t^P\) for \(0\le t\le T_0\). In particular, \(X_{T_0}^P = \tilde X_{T_0}^P\).

The two problems restarted on $[T_0,2T_0]$ therefore have the same initial condition. Local uniqueness then gives $\mu_t^P = \nu_t^P$ for $T_0\le t\le 2T_0$. Repeating this argument on each of the subintervals $[kT_0,(k+1)T_0]$, we obtain $\mu_t^P = \nu_t^P$ for every $t\in[0,T]$, and consequently $X^P = \tilde X^P$. Global uniqueness is established.

\medskip

The theorem is proved.

\end{proof}

\subsection{Uniform estimate for the forward solution}

\begin{proposition}[Uniform estimate of the forward solution]
\label{prop:fixed-point-P-uniform}

For each $P\in\mathcal P$, let $(X^P,\mu^P)$ be the unique solution given by Theorem~\ref{thm:fixed-point-global}. Then
\[
\sup_{P\in\mathcal P}\;\mathbb E^P\!\left[\sup_{0\le t\le T}\bigl|X^P_t\bigr|^2\right] \le C_T \;<\; \infty,
\]
where $C_T$ is independent of $P$.
\end{proposition}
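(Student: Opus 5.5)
The plan is to apply It\^o's formula, or rather the direct integral representation, to $X^P$ and bound each term using the linear growth in (H2). The key point is that the law dependence enters only through the observables $M_t^{\varphi,P}=\mathbb E^P[\varphi(X_t^P)]$ and $M_t^{\beta,P}=\mathbb E^P[\beta(X_t^P)]$. Because $\varphi,\beta$ are Lipschitz, these have linear growth in the second moment:
\[
|M_t^{\varphi,P}|^2+|M_t^{\beta,P}|^2\le C\bigl(1+\mathbb E^P[|X_t^P|^2]\bigr),
\]
with $C$ depending only on $|\varphi(0)|,|\beta(0)|,L_\varphi,L_\beta$. This is the analogue of Corollary~\ref{cor:Psi-law} with $Y\equiv0$. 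This bound lets us avoid using \eqref{eq:frozen-estimate} directly: its right-hand side involves $\sup_t|M_t|^2$, which is itself unknown a priori. Instead we close the estimate on the fixed point itself.

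\textbf{Step 1: integral inequality.} Fix $P\in\mathcal P$ and set $\Phi^P(t)=\mathbb E^P[\sup_{0\le r\le t}|X_r^P|^2]$. From the equation,
\[
\sup_{r\le t}|X_r^P|^2\le 3|\xi|^2+3T\int_0^t|b_s|^2ds+3\sup_{r\le t}\Bigl|\int_0^r\sigma_s\,dB_s\Bigr|^2 .
\]
For the martingale term, under $P=P^a$ we have $d\langle B\rangle=a_tdt$ with $a_t\le\overline a$. The BDG inequality therefore gives a bound by $C_{\mathrm{BDG}}\,|\overline a|\,\mathbb E^P\int_0^t|\sigma_s|^2ds$, with a constant independent of $P$. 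This is exactly the uniformity already used in Proposition~\ref{prop:contraction-uniforme}.

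\textbf{Step 2: linear growth and the law term.} By (H2),
\[
|b_s|^2+|\sigma_s|^2\le C\bigl(1+|X_s^P|^2+|M_s^{\varphi,P}|^2+|M_s^{\beta,P}|^2+|u_s|^2\bigr).
\]
Taking expectations and inserting the bound on the observables, we get $\mathbb E^P[|M_s|^2]\le C(1+\Phi^P(s))$. Hence
\[
\Phi^P(t)\le C\Bigl(1+\mathbb E^P|\xi|^2+\mathbb E^P\!\int_0^T|u_s|^2ds\Bigr)+C\int_0^t\Phi^P(s)\,ds,
\]
where $C$ depends only on $T,\overline a$, the growth constants and $L_\varphi,L_\beta$.

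\textbf{Step 3: Gronwall and supremum over $P$.} Gronwall's lemma yields
\[
\Phi^P(T)\le Ce^{CT}\Bigl(1+\sup_{P}\mathbb E^P|\xi|^2+\sup_P\mathbb E^P\!\int_0^T|u_s|^2ds\Bigr).
\]
The right-hand side is finite by (H1) and $u\in\mathcal U$, and it is independent of $P$. Taking $\sup_{P\in\mathcal P}$ concludes.

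\textbf{Main obstacle.} The delicate point is justifying Gronwall, which requires $\Phi^P(T)<\infty$ a priori. This holds because $X^P=X^{P,\mu^P}$ solves a frozen Lipschitz SDE with $\mu^P\in\mathcal M$. Continuity of $t\mapsto\mu^P_t$ in $W_2$ makes $\sup_t|M_t|$ finite, so \eqref{eq:frozen-estimate} gives finiteness, although not uniformly in $P$.

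The second subtlety is ensuring that no constant depends on $P$. Every constant comes from (H2), the Lipschitz data, and the uniform upper bound $\overline a$ in the BDG step. An alternative is to localize with a stopping time $\tau_n=\inf\{t:|X_t^P|\ge n\}$, run the same argument, and let $n\to\infty$ by Fatou. Localization, however, is awkward for the law term, so I would rely on the a priori finiteness from the frozen estimate.
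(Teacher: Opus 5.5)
Your proof is correct and follows essentially the same route as the paper: BDG plus the linear growth in (H2), the bound $|M_t^{\varphi,P}|^2+|M_t^{\beta,P}|^2\le C(1+\mathbb E^P|X_t^P|^2)$ from the Lipschitz observables, Gronwall on $\Phi^P(t)=\mathbb E^P[\sup_{r\le t}|X_r^P|^2]$, and a supremum over $P$, with constants depending only on $\overline a$ and the data. You also make explicit two points the paper's proof leaves implicit: the a priori finiteness $\Phi^P(T)<\infty$ needed for Gronwall (via the frozen estimate and the $W_2$-continuity of $\mu^P$), and keeping the expectation on the control term.
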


\begin{proof}
Let $X^P$ denote the forward solution. It satisfies equation \eqref{eq:frozen-forward} with observables $M_t^{\varphi,\mu^P} = \Psi_\varphi(\mu^P_t)$ and $M_t^{\beta,\mu^P} = \Psi_\beta(\mu^P_t)$.

By It\^o's formula and the Burkholder--Davis--Gundy inequality,
\[
\begin{aligned}
\mathbb{E}^P\!\left[\sup_{0\le s\le t}|X^P_s|^2\right]
&\le C_T\Bigg(
1+\mathbb{E}^P[|\xi|^2]
+\int_0^t \mathbb{E}^P[|X^P_s|^2]\,ds\\
&\qquad
+\int_0^t |M_s^{\varphi,\mu^P}|^2\,ds
+\int_0^t |M_s^{\beta,\mu^P}|^2\,ds
+\int_0^t |u_s|^2\,ds
\Bigg).
\end{aligned}
\]

By Lemma~\ref{lem:stability-observables} and the linear growth of $\varphi,\beta$,
\[
|M_t^{\varphi,\mu^P}| \le |\varphi(0)| + L_\varphi \mathbb{E}^P[|X^P_t|] \le C_T(1 + \mathbb{E}^P[|X^P_t|^2]^{1/2}),
\]
and similarly for $\beta$. Thus,
\[
|M_t^{\varphi,\mu^P}|^2 + |M_t^{\beta,\mu^P}|^2 \le C_T\left(1 + \mathbb{E}^P[|X^P_t|^2]\right).
\]

Setting $\Phi(t) = \mathbb{E}^P[\sup_{0\le s\le t}|X^P_s|^2]$, we obtain
\[
\Phi(t) \le C_T\left(1 + \mathbb{E}^P[|\xi|^2] + \int_0^t \Phi(s)ds + \int_0^T |u_s|^2 ds\right).
\]

Gronwall's lemma then gives
\[
\Phi(T) \le C_T\left(1 + \mathbb{E}^P[|\xi|^2] + \int_0^T |u_s|^2 ds\right) e^{C_T T}.
\]

Taking the supremum over $P\in\mathcal P$, we obtain
\[
\sup_{P\in\mathcal P}\Phi(T) \le C_T\left(1 + \sup_P\mathbb{E}^P[|\xi|^2] + \sup_P\mathbb{E}^P\left[\int_0^T |u_s|^2 ds\right]\right).
\]

The announced uniform estimate is proved.
\end{proof}

\begin{remark}\label{rem:family-muP}
The family $\{\mu^P\}_{P\in\mathcal{P}}$ depends on $P$ (no dominating measure). All estimates are uniform in $P$.
\end{remark}

\subsection{Robust stability of the forward dynamics}

This subsection establishes the continuous dependence of the McKean--Vlasov forward solution with respect to initial data and control.

\subsubsection{Fundamental stability inequality}

Consider two systems associated respectively with data $(\xi,u)$ and $(\bar\xi,\bar u)$. For $P\in\mathcal{P}$, denote $(X^P,\mu^P)$ and $(\bar X^P,\bar\mu^P)$ the solutions of Theorem~\ref{thm:fixed-point-global}. Set
\[
\Delta X_t := X_t^P - \bar X_t^P,\quad \Delta\xi := \xi - \bar\xi,\quad \Delta u_t := u_t - \bar u_t.
\]

By difference of the forward equations,
\[
\Delta X_t = \Delta\xi + \int_0^t \Delta b_s\,ds + \int_0^t \Delta\sigma_s\,dB_s,
\]
with
\[
\Delta b_s = b\bigl(s,X_s^P, \Psi_\varphi(\mu_s^P), u_s\bigr) - b\bigl(s,\bar X_s^P, \Psi_\varphi(\bar\mu_s^P), \bar u_s\bigr),
\]
and $\Delta\sigma_s$ defined similarly.

Applying successively Jensen's inequality, the uniform Burkholder--Davis--Gundy inequality (see \cite{SonerTouziZhang2011}) and the Lipschitz assumption (H2), we obtain, for each $P\in\mathcal{P}$,
\[
\begin{aligned}
\mathbb{E}^P\Big[\sup_{0\le r\le t}|\Delta X_r|^2\Big]
&\le C_T\mathbb{E}^P[|\Delta\xi|^2]
+ C_T\int_0^t\mathbb{E}^P\Big[\sup_{0\le q\le s}|\Delta X_q|^2\Big]ds \\
&\quad + C_T\int_0^t W_2^2(\mu_s^P,\bar\mu_s^P)ds
+ C_T\int_0^t\mathbb{E}^P[|\Delta u_s|^2]ds,
\end{aligned}
\tag{3.5}\label{eq:forward-stability-basic}
\]
where the constant $C_T$ is independent of $P$.

By the Wasserstein inequality, the canonical coupling gives
\[
W_2^2(\mu_t^P,\bar\mu_t^P) \le \mathbb{E}^P[|X_t^P-\bar X_t^P|^2] \le \mathbb{E}^P\Big[\sup_{0\le r\le t}|\Delta X_r|^2\Big]. \tag{3.6}\label{eq:forward-stability-wasserstein}
\]

Define the function
\[
\Phi(t) := \sup_{P\in\mathcal{P}}\mathbb{E}^P\Big[\sup_{0\le r\le t}|\Delta X_r|^2\Big].
\]

Taking the supremum over $P$ in \eqref{eq:forward-stability-basic} and using \eqref{eq:forward-stability-wasserstein}, we obtain
\[
\Phi(t) \le C_T\|\Delta\xi\|_{L^2_{\mathcal P}}^2 + 2C_T\int_0^t\Phi(s)\,ds + C_T\|\Delta u\|_{\mathcal H^2_{\mathcal P}}^2. \tag{3.7}\label{eq:forward-stability-gronwall}
\]

\subsubsection{Robust stability theorem}

\begin{theorem}[Robust stability of the forward dynamics]
\label{thm:stability-robust}

Under assumptions {\rm(H1)}--{\rm(H2)} of Section 2 concerning the forward coefficients and collective observables, there exists a constant $C_T>0$, depending only on $T$, the Lipschitz constants of the coefficients and the uniform Burkholder--Davis--Gundy constant, such that
\[
\sup_{P\in\mathcal P} \mathbb E^P\Bigl[ \sup_{0\le t\le T} |X_t^P-\bar X_t^P|^2 \Bigr] \le C_T \Bigl( \|\Delta\xi\|_{L^2_{\mathcal P}}^2 + \|\Delta u\|_{\mathcal H^2_{\mathcal P}}^2 \Bigr).
\]

In particular, the map associating to each datum $(\xi,u)$ the corresponding robust forward solution is Lipschitz.

\end{theorem}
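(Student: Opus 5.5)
The plan is to close the integral inequality \eqref{eq:forward-stability-gronwall} with Gronwall's lemma. Everything preceding the statement already reduces the problem to this, so the work consists of justifying \eqref{eq:forward-stability-basic} carefully and checking that no constant depends on $P$.

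\textbf{Step 1: pathwise bound for fixed $P$.} Fix $P\in\mathcal P$ and write the difference equation for $\Delta X$. By $(a+b+c)^2\le 3(a^2+b^2+c^2)$ and Jensen on the $ds$-integral,
\[
\mathbb E^P\Bigl[\sup_{r\le t}|\Delta X_r|^2\Bigr]
\le 3\,\mathbb E^P[|\Delta\xi|^2]+3t\,\mathbb E^P\Bigl[\int_0^t|\Delta b_s|^2ds\Bigr]+3\,\mathbb E^P\Bigl[\sup_{r\le t}\Bigl|\int_0^r\Delta\sigma_s\,dB_s\Bigr|^2\Bigr].
\]
For the last term, BDG under $P=P^a$ gives the bound $C\,\mathbb E^P\bigl[\int_0^t\operatorname{Tr}(\Delta\sigma_s a_s\Delta\sigma_s^\top)ds\bigr]\le C|\overline a|\,\mathbb E^P\bigl[\int_0^t|\Delta\sigma_s|^2ds\bigr]$. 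This is the point where uniformity in $P$ enters, and only through the upper bound $\overline a$.

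Hypothesis (H2) must then be read as Lipschitz in $u$ as well; this is needed for the $\Delta u$ term in the statement, and I will state it explicitly. Under this reading,
\[
|\Delta b_s|^2+|\Delta\sigma_s|^2\le C L^2\bigl(|\Delta X_s|^2+(L_\varphi^2+L_\beta^2)W_2^2(\mu^P_s,\bar\mu^P_s)+|\Delta u_s|^2\bigr).
\]
Here the observable differences are controlled by Lemma~\ref{lem:stability-observables}. This yields \eqref{eq:forward-stability-basic} with $C_T$ depending only on $T,L,L_\varphi,L_\beta,\overline a$ and the BDG constant.

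\textbf{Step 2: remove the measure term.} Use \eqref{eq:forward-stability-wasserstein}, i.e.\ Corollary~\ref{cor:Psi-law} with the synchronous coupling under the same $P$. This is legitimate because both $\mu^P$ and $\bar\mu^P$ are laws under the same $P$.

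I will apply Gronwall for fixed $P$ first, and only take $\sup_P$ afterwards. The quantity $\varphi_P(t):=\mathbb E^P[\sup_{r\le t}|\Delta X_r|^2]$ is finite by Proposition~\ref{prop:fixed-point-P-uniform}, and it satisfies
\[
\varphi_P(t)\le C_T\,\mathbb E^P|\Delta\xi|^2+2C_T\int_0^t\varphi_P(s)\,ds+C_T\,\mathbb E^P\int_0^T|\Delta u_s|^2ds .
\]
Gronwall then gives $\varphi_P(T)\le C_Te^{2C_TT}\bigl(\mathbb E^P|\Delta\xi|^2+\mathbb E^P\int_0^T|\Delta u|^2\bigr)$. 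Bounding each expectation on the right by its supremum yields the claim; the sup-norm of $\Delta u$ here is the $\mathcal H^2_{\mathcal P}$-type norm (equivalent to the weighted one by ellipticity, as noted after Definition~\ref{def:H2P}). Proceeding in this order avoids any measurability issue with $t\mapsto\sup_P\varphi_P(t)$ inside the integral, though that route also works.

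\textbf{Main obstacle.} The only delicate point is uniformity of the constant: BDG and the quadratic-variation bound must not depend on the particular $a$. This is settled by $a_t\le\overline a$. The Lipschitz claim for $(\xi,u)\mapsto X$ is then immediate from the final inequality.
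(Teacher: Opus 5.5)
Your proposal is correct and follows essentially the paper's argument. You use the BDG/Lipschitz bound \eqref{eq:forward-stability-basic}, absorb the Wasserstein term via the synchronous coupling \eqref{eq:forward-stability-wasserstein}, and close with Gronwall. The only procedural difference is that you apply Gronwall for each fixed $P$ before taking $\sup_P$, whereas the paper applies it directly to $\Phi(t)=\sup_{P}\mathbb E^P[\sup_{r\le t}|\Delta X_r|^2]$; this is harmless, since that function is nondecreasing.

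Your explicit requirement that $b,\sigma$ be Lipschitz in $u$ is a genuine and necessary correction. As stated, (H2) only gives Lipschitz continuity in $(x,m_\varphi,m_\beta)$, yet the paper silently uses Lipschitz continuity in $u$ to produce the $\Delta u$ term in \eqref{eq:forward-stability-basic}.
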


\begin{proof}
Define $\Phi(t)$ as above. From estimate \eqref{eq:forward-stability-gronwall}, for all $t\in[0,T]$:
\[
\Phi(t) \le C_T \|\Delta\xi\|_{L^2_{\mathcal P}}^2 + 2C_T \int_0^t \Phi(s)\,ds + C_T \|\Delta u\|_{\mathcal H^2_{\mathcal P}}^2.
\]

Gronwall's lemma then gives
\[
\Phi(t) \le e^{2C_T t} \Bigl( C_T \|\Delta\xi\|_{L^2_{\mathcal P}}^2 + C_T \|\Delta u\|_{\mathcal H^2_{\mathcal P}}^2 \Bigr).
\]

For $t = T$, we obtain
\[
\Phi(T) \le C_T \Bigl( \|\Delta\xi\|_{L^2_{\mathcal P}}^2 + \|\Delta u\|_{\mathcal H^2_{\mathcal P}}^2 \Bigr),
\]
where $C_T$ denotes, as usual, a generic constant $C_T := C_T e^{2C_T T}$. By definition of $\Phi(T)$, the result follows.
\end{proof}

\begin{corollary}[Uniform stability of marginal laws]
\label{cor:stability-laws}

Under the assumptions of Theorem~\ref{thm:stability-robust}, there exists a constant $C_T>0$ such that
\[
\sup_{0\le t\le T} \sup_{P\in\mathcal P} W_2(\mu_t^P,\bar\mu_t^P) \le C_T \Bigl( \|\Delta\xi\|_{L^2_{\mathcal P}} + \|\Delta u\|_{\mathcal H^2_{\mathcal P}} \Bigr).
\]

\end{corollary}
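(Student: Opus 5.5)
The estimate follows directly from Theorem~\ref{thm:stability-robust} by passing from pathwise $L^2$ control to the Wasserstein distance through the synchronous coupling.

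\textbf{Step 1 (coupling).} Fix $P\in\mathcal P$ and $t\in[0,T]$. Both solutions $X^P$ and $\bar X^P$ are built on the same canonical space under the same $P$. Hence the pair $(X_t^P,\bar X_t^P)$ defines an admissible coupling in $\Pi(\mu_t^P,\bar\mu_t^P)$. As in \eqref{eq:forward-stability-wasserstein}, this gives
\[
W_2^2(\mu_t^P,\bar\mu_t^P)\le \mathbb E^P\bigl[|X_t^P-\bar X_t^P|^2\bigr]\le \mathbb E^P\Bigl[\sup_{0\le r\le T}|X_r^P-\bar X_r^P|^2\Bigr].
\]

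\textbf{Step 2 (uniformity).} The right-hand side no longer depends on $t$, so we may take the supremum over $t\in[0,T]$ and then over $P\in\mathcal P$; the order of the two suprema is irrelevant. Theorem~\ref{thm:stability-robust} then bounds the result by
\[
C_T\bigl(\|\Delta\xi\|_{L^2_{\mathcal P}}^2+\|\Delta u\|_{\mathcal H^2_{\mathcal P}}^2\bigr),
\]
where $C_T$ is independent of $P$.

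\textbf{Step 3 (square roots).} Taking square roots and using $\sqrt{a+b}\le\sqrt a+\sqrt b$ yields the claim, with constant $\sqrt{C_T}$, which we rename $C_T$. We also use that $\sup_t\sup_P W_2=(\sup_t\sup_P W_2^2)^{1/2}$, which holds because $W_2\ge0$.

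The only point needing care is that both flows are evaluated under the same $P$, so that the synchronous coupling is legitimate. This is exactly how $\mu^P$ and $\bar\mu^P$ are defined. There is no genuine obstacle.
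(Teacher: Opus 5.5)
Your proposal is correct and follows the paper's own proof: under each fixed $P$, the synchronous coupling $(X_t^P,\bar X_t^P)$ gives $W_2^2(\mu_t^P,\bar\mu_t^P)\le \mathbb E^P\bigl[\sup_{0\le s\le T}|X_s^P-\bar X_s^P|^2\bigr]$, and taking suprema and applying Theorem~\ref{thm:stability-robust} concludes. Your square-root step via $\sqrt{a+b}\le\sqrt a+\sqrt b$ simply makes explicit what the paper leaves implicit.
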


\begin{proof}
For every $P \in \mathcal P$ and every $t\in[0,T]$,
\[
W_2^2(\mu_t^P, \bar\mu_t^P) \le \mathbb E^P\bigl[ |X_t^P - \bar X_t^P|^2 \bigr] \le \mathbb E^P\Bigl[ \sup_{0 \le s \le T} |X_s^P - \bar X_s^P|^2 \Bigr].
\]
Taking suprema and applying Theorem~\ref{thm:stability-robust} yields the result.
\end{proof}

\begin{corollary}[Stability of collective observables]
\label{cor:stability-observables}

Under the assumptions of Theorem~\ref{thm:stability-robust}, there exists a constant $C_T>0$ such that
\[
\sup_{0\le t\le T} \sup_{P\in\mathcal P} \left| \Psi_\varphi(\mu_t^P) - \Psi_\varphi(\bar\mu_t^P) \right| \le C_T \Bigl( \|\Delta\xi\|_{L^2_{\mathcal P}} + \|\Delta u\|_{\mathcal H^2_{\mathcal P}} \Bigr),
\]
and similarly for $\Psi_\beta$, $\Psi_\gamma$ and $\Psi_\delta$.

\end{corollary}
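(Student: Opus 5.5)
The plan is to combine Lemma~\ref{lem:stability-observables} with Corollary~\ref{cor:stability-laws}. For $\Psi_\varphi$, $\Psi_\beta$ and $\Psi_\gamma$ the argument is immediate. For every $P\in\mathcal P$ and $t\in[0,T]$, Lemma~\ref{lem:stability-observables} applied to the Lipschitz test function $\varphi$ gives
\[
\bigl|\Psi_\varphi(\mu_t^P)-\Psi_\varphi(\bar\mu_t^P)\bigr|\le L_\varphi\,W_2(\mu_t^P,\bar\mu_t^P).
\]
We then take the supremum over $P$ and $t$ and invoke Corollary~\ref{cor:stability-laws}. This yields the claim with constant $L_\varphi C_T$, which is still independent of $P$. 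The cases $\beta$ and $\gamma$ are identical, with $L_\beta$ and $L_\gamma$. The same argument also covers $\Psi_\lambda$ at $t=T$ and the ALA reference $\Upsilon$, since both are built on Lipschitz test functions of $X$.

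The observable $\Psi_\delta$ needs separate treatment and is the main obstacle. It acts on the law $\nu_t^P$ of the backward component $Y^P$, not on $\mu_t^P$. Forward stability alone therefore does not control it; what is needed is a stability estimate for $Y$. I would proceed in three steps.

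\textbf{First}, apply Lemma~\ref{lem:stability-observables} in dimension one with the Lipschitz function $\delta$:
\[
|\Psi_\delta(\nu_t^P)-\Psi_\delta(\bar\nu_t^P)|\le L_\delta W_2(\nu_t^P,\bar\nu_t^P)\le L_\delta\,\mathbb E^P[|Y_t^P-\bar Y_t^P|^2]^{1/2}.
\]

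\textbf{Second}, bound $\sup_P\mathbb E^P[\sup_t|\Delta Y_t|^2]$ by a standard BSDE stability argument. Apply It\^o's formula to $|\Delta Y_t|^2$ using the weighted norm $|z|_{a_t}^2$. The monotonicity (H5) absorbs the $y$-dependence. The Lipschitz property in $(x,m_\gamma,z,m_\delta)$ from (H3) is handled with Young's inequality. The terminal difference is controlled through (H4) and Corollary~\ref{cor:Psi-law}. The forward terms are then bounded by Theorem~\ref{thm:stability-robust}. The term $\rho(\Gamma-\bar\Gamma)$ requires control of the selection difference. Here I would stay at fixed Yosida level $\lambda$, where the signal is Lipschitz in $(x,\Upsilon)$, or else invoke the backward stability result of Section~4 under (H6).

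\textbf{Third}, the $m_\delta$-term produces $L_fL_\delta\,\mathbb E^P[|\Delta Y_s|^2]$ on the right-hand side. This term is absorbed by backward Gronwall, or by the dissipation $\gamma$.

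Since the minimal-norm selection is not Lipschitz across the kink, the $\Psi_\delta$ part is exactly the step that is delicate. If no uniform-in-$\lambda$ backward estimate is available, I would restrict the statement for $\Psi_\delta$ to the regularized system, or to the constants supplied by the Section~4 stability theorem.
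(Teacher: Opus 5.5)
Your argument for $\Psi_\varphi$, $\Psi_\beta$, $\Psi_\gamma$ (and likewise for $\Psi_\lambda$ at $t=T$ and for $\Upsilon$) is exactly the paper's proof: Lemma~\ref{lem:stability-observables} followed by Corollary~\ref{cor:stability-laws}, with constant $L_\varphi C_T$. The paper handles $\Psi_\delta$ with the single word ``similarly'', and your objection to this is justified. In fact the situation is worse than you suggest. Since $\delta$ is defined on $\mathbb R$ and $M_t^{\delta,P}=\mathbb E^P[\delta(Y_t)]$, the clause is meant for the laws $\nu_t^P,\bar\nu_t^P$ of the backward components; the paper itself invokes it that way in the proof of Theorem~\ref{thm:existence-backward}. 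These laws are not even defined under (H1)--(H2) alone. Once (H3)--(H5) and (H5bis) are added, the linear bound is false for the canonical solution. Here is an example.
\begin{itemize}
\item Data: $d=1$, $b=\sigma=0$, $\chi\equiv0$, $g\equiv0$, $f=-\tfrac12 y$, $\delta(y)=y/2$, $\bar u=u$, $\xi=0$, $\bar\xi=h>0$.
\item Signals: $X\equiv0$ sits on the kink, so $\Gamma=A^0_{\mathrm{ALA}}(0,0)=0$. In contrast, $\bar\Gamma=A^0_{\mathrm{ALA}}(h,0)=\kappa_++\varepsilon h$.
\item Backward components: $Y\equiv0$, while $\bar Y_0=2\rho(\kappa_++\varepsilon h)(1-e^{-T/2})$.
\item Conclusion: $|\Psi_\delta(\nu_0^P)-\Psi_\delta(\bar\nu_0^P)|\ge\rho\kappa_+(1-e^{-T/2})$ for every $P$, while $\|\Delta\xi\|_{L^2_{\mathcal P}}=h\to0$.
\end{itemize}
The Yosida-limit selection of Theorem~\ref{thm:existence-backward} coincides with $A^0_{\mathrm{ALA}}$ here. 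Also, (H6) can be satisfied by taking $\rho\varepsilon$ large. So neither of these rescues the claim.

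Only the first of your two remedies is sound.

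Theorem~\ref{thm:stability-backward} cannot help. It compares data $(g^i,f^i)$ along one and the same forward trajectory and signal: the $\rho\Gamma$ term cancels in its proof, and (H6) is never used there. If you encode the change of forward input as a generator perturbation, then $\Delta f$ contains $\rho(\Gamma-\bar\Gamma)$. The theorem then merely returns $\rho^2\sup_P\mathbb E^P\int_0^T|\Gamma_s-\bar\Gamma_s|^2ds$, which is exactly the quantity the example shows is not $O(\|\Delta\xi\|^2_{L^2_{\mathcal P}})$.

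At fixed $\lambda$, your It\^o--Young--Gronwall scheme does close. The resolvent can be written as
$J_\lambda^r(x)=r\mathbf 1+\bigl((1+\lambda\varepsilon)I+\lambda\partial\vartheta\bigr)^{-1}\bigl(x-(1+\lambda\varepsilon)r\mathbf 1\bigr)$.
From this, $A^\lambda_{\mathrm{ALA}}$ is Lipschitz in $(x,r)$ with constant of order $1/\lambda$, so you obtain the bound with a constant $C_\lambda$. The blow-up of $C_\lambda$ is genuine: in the example, $A^\lambda_{\mathrm{ALA}}(h,0)=h/\lambda$ for $h\le\lambda\kappa_+$. If $\Delta u\neq0$, you additionally need $f$ to be Lipschitz in $u$, which (H3) does not provide; the same tacit condition on $b,\sigma$ already underlies Theorem~\ref{thm:stability-robust}.

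So the correct conclusion is the one you hedge toward. The corollary holds as stated for the forward observables. The $\Psi_\delta$ clause must either be restricted to the regularized system with a $\lambda$-dependent constant, or be supplemented by an extra structural assumption such as nondegeneracy or non-contact near the kinks.
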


\begin{proof}
By Lemma~\ref{lem:stability-observables},
\[
\big| \Psi_\varphi(\mu_t^P) - \Psi_\varphi(\bar\mu_t^P) \big| \le L_\varphi W_2(\mu_t^P,\bar\mu_t^P).
\]
The result then follows from Corollary~\ref{cor:stability-laws}.
\end{proof}

\begin{remark}
The collective observables thus inherit the Wasserstein stability of the marginal laws. This property will allow controlling the mean-field terms appearing in the backward generator and in the terminal condition.
\end{remark}

\section{Variational analysis of the selected robust backward component}

We now turn to the backward component of the system. Unlike a classical BSVI, the multi-valued monotone operator arising from the ALA functional is evaluated at the forward state \(X^P\), not at the backward variable \(Y^P\). The backward component is therefore treated as a selected backward variational system driven by an exogenous monotone signal. The analysis relies on variational techniques, Yosida regularization and monotonicity arguments for maximal monotone operators.

The delicate point is the non-smooth term arising from $\Phi_{\mathrm{ALA}}$: it requires temporarily replacing the multi-valued signal by its Yosida approximations, then identifying the limit using maximal monotonicity. This section establishes the existence, canonical uniqueness and stability of the selected robust backward variational system.

\subsection{Asymmetric loss aversion functional}

Recall the definition of $\Phi_{\mathrm{ALA}}$ given in Section 2. Let $\chi:\R^d\to\R$ be an $L_\chi$-Lipschitz function (denoted $\psi$ in Section 2). We define the collective observable
\[
\Upsilon(\mu) = \int_{\R^d} \chi(y)\,\mu(dy).
\]

The asymmetric loss aversion functional is then
\[
\Phi_{\mathrm{ALA}}(x,\mu) = \vartheta\bigl(x - \Upsilon(\mu)\mathbf{1}\bigr) + \frac{\varepsilon}{2}|x|^2 + \frac{\beta}{2}W_2^2(\mu,\mu^\star),
\]
where $\mathbf{1}=(1,\ldots,1)\in\mathbb R^d$, $\vartheta:\R^d\to\R$ is defined componentwise by
\[
\vartheta(z) = \sum_{i=1}^d \vartheta_0(z_i),\qquad \vartheta_0(r) = \kappa_+ r_+ + \kappa_- r_-,
\]
with $\kappa_- > \kappa_+ > 0$.

This functional is convex and non-differentiable on the hyperplane \(\{x=\Upsilon(\mu)\mathbf 1\}\). Since the Wasserstein term does not depend on \(x\), the subdifferential with respect to \(x\) depends on the law only through the scalar reference \(r=\Upsilon(\mu)\). For \(r\in\mathbb R\), we define the reduced ALA operator by
\[
A_{\mathrm{ALA}}(x,r)
:=
\prod_{i=1}^d \partial\vartheta_0(x_i-r)+\varepsilon x.
\]
Thus
\[
\partial_x\Phi_{\mathrm{ALA}}(x,\mu)
=
A_{\mathrm{ALA}}\bigl(x,\Upsilon(\mu)\bigr),
\]
where $\partial\vartheta_0$ is the subdifferential of the scalar convex function $\vartheta_0$:
\[
\partial\vartheta_0(r) = 
\begin{cases}
\{\kappa_+\}, & r > 0,\\
[-\kappa_-, \kappa_+], & r = 0,\\
\{-\kappa_-\}, & r < 0.
\end{cases}
\]

Thus, for every fixed \(r\in\mathbb R\), the map \(x\mapsto A_{\mathrm{ALA}}(x,r)\) is a maximal monotone multi-valued operator.

\begin{lemma}[Properties of $A_{\mathrm{ALA}}$]
\label{lem:ALA-properties}
For every fixed \(r\in\mathbb R\), the operator \(A_{\mathrm{ALA}}(\cdot,r):\mathbb R^d\rightrightarrows\mathbb R^d\) satisfies:

\begin{enumerate}
\item \textbf{Linear growth:} For every \(p\in A_{\mathrm{ALA}}(x,r)\),
\[
|p| \le \max(\kappa_+,\kappa_-)\sqrt{d} + \varepsilon|x|.
\]

\item \textbf{Strong monotonicity:} For all \(x,x'\in\mathbb R^d\), \(p\in A_{\mathrm{ALA}}(x,r)\), \(p'\in A_{\mathrm{ALA}}(x',r)\),
\[
\langle p-p',\, x-x'\rangle \ge \varepsilon |x-x'|^2.
\]

\item \textbf{Maximality:} For every \(r\in\mathbb R\), the operator \(x\mapsto A_{\mathrm{ALA}}(x,r)\) is maximal monotone on \(\mathbb R^d\).
\end{enumerate}
\end{lemma}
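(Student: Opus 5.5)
The plan is to work componentwise, using the product structure $A_{\mathrm{ALA}}(x,r)=\prod_{i=1}^d\partial\vartheta_0(x_i-r)+\varepsilon x$, so that every element can be written $p=q+\varepsilon x$ with $q_i\in\partial\vartheta_0(x_i-r)$.

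\textbf{Linear growth.} From the explicit formula for $\partial\vartheta_0$ we have $\partial\vartheta_0(s)\subset[-\kappa_-,\kappa_+]$ for every $s\in\mathbb R$. Hence $|q_i|\le\max(\kappa_+,\kappa_-)$ for each $i$, and therefore $|q|\le\max(\kappa_+,\kappa_-)\sqrt d$. The triangle inequality then gives $|p|\le|q|+\varepsilon|x|$, which is the bound in item 1.

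\textbf{Strong monotonicity.} Write $p=q+\varepsilon x$ and $p'=q'+\varepsilon x'$. Expanding,
\[
\langle p-p',x-x'\rangle=\sum_{i=1}^d (q_i-q_i')\bigl((x_i-r)-(x_i'-r)\bigr)+\varepsilon|x-x'|^2 .
\]
Each summand is nonnegative because $\partial\vartheta_0$ is the subdifferential of the convex function $\vartheta_0$ and is therefore monotone on $\mathbb R$. One can also check this directly: $\partial\vartheta_0$ is a nondecreasing set-valued map, since its values are $\{-\kappa_-\}$, then $[-\kappa_-,\kappa_+]$, then $\{\kappa_+\}$. Summing the nonnegative terms yields item 2.

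\textbf{Maximality.} This is the only step that needs an external result rather than a computation. Define $F(x):=\vartheta(x-r\mathbf 1)+\frac{\varepsilon}{2}|x|^2$, which is convex, finite and continuous on all of $\mathbb R^d$. I will invoke three standard facts.
\begin{itemize}
\item By the sum rule (Rockafellar), which applies because both terms are finite everywhere, $\partial F(x)=\partial\vartheta(x-r\mathbf 1)+\varepsilon x$.
\item Because $\vartheta$ is separable, its subdifferential is the Cartesian product $\partial\vartheta(z)=\prod_i\partial\vartheta_0(z_i)$.
\item By Rockafellar's theorem (also in Br\'ezis), the subdifferential of a proper lower semicontinuous convex function is maximal monotone.
\end{itemize}
Together these give $A_{\mathrm{ALA}}(\cdot,r)=\partial F$, which is therefore maximal monotone.

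If a self-contained argument is preferred, I would use Minty's criterion instead: show that $I+A_{\mathrm{ALA}}(\cdot,r)$ is surjective. The equation $y\in x+\varepsilon x+\partial\vartheta(x-r\mathbf 1)$ decouples into $d$ scalar inclusions $y_i\in(1+\varepsilon)x_i+\partial\vartheta_0(x_i-r)$. Each scalar map $s\mapsto(1+\varepsilon)s+\partial\vartheta_0(s-r)$ is a nondecreasing set-valued map with closed interval values and no gaps, tending to $\pm\infty$ as $s\to\pm\infty$. Its range is therefore all of $\mathbb R$, so each scalar inclusion has a solution and surjectivity follows.
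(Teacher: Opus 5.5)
Your proposal is correct and follows the paper's proof: a componentwise bound $\partial\vartheta_0\subset[-\kappa_-,\kappa_+]$ plus the triangle inequality gives the growth estimate, monotonicity of the scalar subdifferential summed over coordinates plus $\varepsilon|x-x'|^2$ gives strong monotonicity, and Rockafellar's theorem gives maximality. Where you differ is that you identify $A_{\mathrm{ALA}}(\cdot,r)$ with $\partial F$, $F(x)=\vartheta(x-r\mathbf 1)+\tfrac{\varepsilon}{2}|x|^2$, explicitly via the sum rule and separability (with the Minty surjectivity check as a self-contained backup), which makes rigorous a step the paper only asserts before citing Rockafellar.
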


\begin{proof}
The linear growth follows from the bound \( |\theta|\le\max(\kappa_+,\kappa_-)\) for every \(\theta\in\partial\vartheta_0\) and the fact that the vector \((\theta_1,\ldots,\theta_d)\) has at most \(\sqrt d\) times this bound. For strong monotonicity, we write componentwise. For each \(i=1,\ldots,d\), let \(\theta_i\in\partial\vartheta_0(x_i-r)\) and \(\theta_i'\in\partial\vartheta_0(x_i'-r)\). The monotonicity of \(\partial\vartheta_0\) gives \((\theta_i-\theta_i')(x_i-x_i')\ge 0\). Summing over \(i\),
\[
\langle p-p', x-x'\rangle = \sum_{i=1}^d (\theta_i-\theta_i')(x_i-x_i') + \varepsilon|x-x'|^2 \ge \varepsilon|x-x'|^2.
\]
Maximality is a classical result of Rockafellar: the subdifferential of a proper lower semicontinuous convex function is maximal monotone.
\end{proof}

\subsection{Yosida regularization}

Since the operator is multi-valued, we replace it by a regularized version that preserves its essential properties.

For \(\lambda>0\) and \(r\in\mathbb R\) fixed, we define the resolvent operator
\[
J_\lambda^r = (I + \lambda A_{\mathrm{ALA}}(\cdot,r))^{-1}.
\]
The Yosida approximation is then
\[
A^\lambda_{\mathrm{ALA}}(x,r) = \frac1\lambda\bigl(x - J_\lambda^r(x)\bigr).
\]

\begin{remark}[Raw Yosida approximation and later smoothing]
In the present well-posedness article, the raw Yosida approximation \(A^\lambda_{\mathrm{ALA}}\) is sufficient, because the argument only uses Lipschitz continuity, monotonicity, uniform estimates and graph convergence. In the subsequent control article, where differentiability of the regularized dynamics is needed to derive a Pontryagin maximum principle, one introduces a smoothed approximation \(A_{\mathrm{ALA}}^{\lambda,\varepsilon}\). Thus, \(A^\lambda_{\mathrm{ALA}}\) here should be understood as the raw Yosida approximation, while the smoothed object belongs to the differentiability analysis of the control problem.
\end{remark}

\begin{lemma}[Fundamental properties of the Yosida approximation]
\label{lem:yosida-properties}
For every \(\lambda>0\), \(r\in\mathbb R\) and \(x,y\in\mathbb R^d\), we have:

\begin{enumerate}
\item \textbf{Lipschitz continuity:}
\[
|A^\lambda_{\mathrm{ALA}}(x,r) - A^\lambda_{\mathrm{ALA}}(y,r)| \le \frac1\lambda |x-y|.
\]

\item \textbf{Regularized strong monotonicity:}
\[
\left\langle A^\lambda_{\mathrm{ALA}}(x,r) - A^\lambda_{\mathrm{ALA}}(y,r),\, x-y\right\rangle \ge \frac{\varepsilon}{1+\lambda\varepsilon} |x-y|^2.
\]

\item \textbf{Inclusion:}
\[
A^\lambda_{\mathrm{ALA}}(x,r) \in A_{\mathrm{ALA}}(J_\lambda^r(x),r).
\]

\item \textbf{Uniform linear growth:}
\[
|A^\lambda_{\mathrm{ALA}}(x,r)| \le C(1+|x|),
\]
where $C$ is independent of $\lambda$.

\item \textbf{Pointwise convergence:}
For every \(x\in\mathbb R^d\), \(A^\lambda_{\mathrm{ALA}}(x,r)\to A^0_{\mathrm{ALA}}(x,r)\) as \(\lambda\to0\), where \(A^0_{\mathrm{ALA}}(x,r)\) is the minimal norm element in \(A_{\mathrm{ALA}}(x,r)\).
\end{enumerate}
\end{lemma}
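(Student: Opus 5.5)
The plan is to exploit the fact that, for fixed $r$, the operator $A_{\mathrm{ALA}}(\cdot,r)$ is the subdifferential of the proper, convex, finite function $\phi_r(x)=\vartheta(x-r\mathbf 1)+\frac{\varepsilon}{2}|x|^2$. Lemma~\ref{lem:ALA-properties} shows that it is maximal monotone and $\varepsilon$-strongly monotone. Everything should then follow from standard Brézis theory for maximal monotone operators on a Hilbert space, here $\mathbb R^d$. The computations are even explicit, because $A_{\mathrm{ALA}}(\cdot,r)$ splits componentwise into scalar operators $a(s)=\partial\vartheta_0(s-r)+\varepsilon s$. I would first recall that $J^r_\lambda$ is single-valued and $1$-Lipschitz by maximality (Minty) and monotonicity. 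I would also recall the identity $x=J^r_\lambda x+\lambda A^\lambda_{\mathrm{ALA}}(x,r)$ with $A^\lambda_{\mathrm{ALA}}(x,r)\in A_{\mathrm{ALA}}(J^r_\lambda x,r)$. The latter is exactly item 3 and follows directly from the definition of the resolvent.

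For items 1 and 2, I would set $p=A^\lambda(x)$, $q=A^\lambda(y)$, $\bar x=J_\lambda x$ and $\bar y=J_\lambda y$. Item 3 together with strong monotonicity gives $\langle p-q,\bar x-\bar y\rangle\ge\varepsilon|\bar x-\bar y|^2$. Substituting $\bar x-\bar y=(x-y)-\lambda(p-q)$ yields
\[
\langle p-q,x-y\rangle\ge\lambda|p-q|^2+\varepsilon|x-y-\lambda(p-q)|^2 .
\]
Dropping the second term gives $\lambda|p-q|^2\le |p-q|\,|x-y|$, which is item 1. For item 2 I would expand the square and set $t=\langle p-q,x-y\rangle$. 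Using $|p-q|^2\le t/\lambda$, we get $t(1+2\lambda\varepsilon)\ge \lambda(1+\lambda\varepsilon)|p-q|^2+\varepsilon|x-y|^2\ge (1+\lambda\varepsilon)t\cdot\frac{\lambda^2}{\lambda}\cdot\frac{|p-q|^2}{t}+\dots$. The cleaner route is different: the strong monotonicity says that $A-\varepsilon I$ is monotone, so $A=B+\varepsilon I$ with $B$ maximal monotone. I would then solve the resolvent explicitly, $J_\lambda^A=J^B_{\lambda/(1+\lambda\varepsilon)}\circ\frac{1}{1+\lambda\varepsilon}$, and deduce
\[
A^\lambda=\frac{\varepsilon}{1+\lambda\varepsilon}I+\frac{1}{1+\lambda\varepsilon}B^{\mu}\Bigl(\frac{x}{1+\lambda\varepsilon}\Bigr), \qquad \mu=\frac{\lambda}{1+\lambda\varepsilon}.
\]
Monotonicity of $B^\mu$ then gives item 2 immediately. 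This algebraic decomposition is the step I expect to require the most care.

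For item 4, I would use the standard bound $|A^\lambda(x)|\le |A^0(x)|$, valid for any maximal monotone operator with $x\in D(A)=\mathbb R^d$. Combined with item 1 of Lemma~\ref{lem:ALA-properties}, this gives $|A^\lambda(x)|\le \max(\kappa_+,\kappa_-)\sqrt d+\varepsilon|x|\le C(1+|x|)$, with $C$ independent of $\lambda$ and $r$. To prove $|A^\lambda x|\le|A^0x|$, I would apply monotonicity to the pair $(J_\lambda x,A^\lambda x)$ and $(x,A^0x)$. This gives $\langle A^\lambda x-A^0x,-\lambda A^\lambda x\rangle\ge0$, and hence $|A^\lambda x|^2\le\langle A^0x,A^\lambda x\rangle$.

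For item 5, I would use this boundedness. Since $|J_\lambda x-x|=\lambda|A^\lambda x|\le\lambda|A^0x|\to0$, any cluster point $p$ of $A^\lambda x$ satisfies $p\in A(x)$, because the graph of a maximal monotone operator is closed. Moreover $|p|\le|A^0x|$. By uniqueness of the minimal norm element (the set $A(x)$ is closed and convex), we get $p=A^0x$. Compactness in $\mathbb R^d$ then gives convergence of the whole family. As an alternative check, the scalar case can be computed explicitly and agrees with this conclusion.
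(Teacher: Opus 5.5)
Your proof is correct. The main difference from the paper is that yours is self-contained: the paper cites Brézis (Proposition~2.6) for items 1--3 and 5 and asserts the constant $\varepsilon/(1+\lambda\varepsilon)$ ``in particular.'' It then proves item 4 from the inclusion $A^\lambda_{\mathrm{ALA}}(x,r)\in A_{\mathrm{ALA}}(J^r_\lambda x,r)$ and the growth bound of Lemma~\ref{lem:ALA-properties}, namely $|A^\lambda_{\mathrm{ALA}}(x,r)|\le\max(\kappa_+,\kappa_-)\sqrt d+\varepsilon|J^r_\lambda x|$.

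Your route differs in two places.
\begin{itemize}
\item For item 2 you give an actual argument, via $A=B+\varepsilon I$ and $J^A_\lambda x=J^B_\mu\bigl(x/(1+\lambda\varepsilon)\bigr)$ with $\mu=\lambda/(1+\lambda\varepsilon)$. This is worth having, because the strong-monotonicity constant of the Yosida approximation is not literally part of the cited proposition.
\item For item 4 you use $|A^\lambda x|\le|A^0x|$. This gives the explicit bound $\max(\kappa_+,\kappa_-)\sqrt d+\varepsilon|x|$, uniform in both $\lambda$ and $r$, and needs no strong monotonicity. The paper's chain instead tacitly requires a $\lambda$-uniform bound $|J^r_\lambda x|\le C(1+|x|)$. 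That bound is true, e.g.\ $|J^r_\lambda x|\le|x|+\max(\kappa_+,\kappa_-)\sqrt d/\varepsilon$ from the resolvent equation, but the paper does not state it.
\end{itemize}

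Your arguments for items 1, 3 and 5 are the standard ones underlying the citation.

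One presentational point: your first attempt at item 2 trails off into an incoherent chain and should be deleted. If you prefer that direct route, it can be closed. Use Cauchy--Schwarz, $t\le|p-q|\,|x-y|$, together with $t\ge\lambda|p-q|^2\ge0$. Your inequality then becomes $\lambda(1+\lambda\varepsilon)s^2-(1+2\lambda\varepsilon)s+\varepsilon\le0$ for $s=t/|x-y|^2$. The discriminant equals $1$, so the roots are exactly $\varepsilon/(1+\lambda\varepsilon)$ and $1/\lambda$, and this yields items 2 and 1 simultaneously.
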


\begin{proof}
Properties (i)--(iii) and (v) follow from the general theory of resolvents of maximal monotone operators (see \cite{Brezis1973}, Proposition 2.6). In particular, the regularized strong monotonicity constant is \(\varepsilon/(1+\lambda\varepsilon)\). For (iv), we use \(A^\lambda_{\mathrm{ALA}}(x,r)\in A_{\mathrm{ALA}}(J_\lambda^r(x),r)\) and Lemma~\ref{lem:ALA-properties}:
\[
|A^\lambda_{\mathrm{ALA}}(x,r)|
\le
\max(\kappa_+,\kappa_-)\sqrt d+\varepsilon|J_\lambda^r(x)|
\le C(1+|x|).
\]
\end{proof}

\begin{lemma}[Strong convergence of Yosida approximations]
\label{lem:yosida-strong-convergence}

Let \(X\in \mathcal S_{\mathcal P}^{2}\) and let \(R=(R_t)_{0\le t\le T}\) be a measurable real-valued reference process, for instance \(R_t=\Upsilon(\mu_t)\). Then

\[
A_{\mathrm{ALA}}^{\lambda}(X,R)
\longrightarrow
A_{\mathrm{ALA}}^{0}(X,R)
\quad\text{in }\mathcal H_{\mathcal P}^{2},
\]

as \(\lambda\to0\), where \(A_{\mathrm{ALA}}^{0}(X_t,R_t)\) denotes the minimal norm element of \(A_{\mathrm{ALA}}(X_t,R_t)\).
\end{lemma}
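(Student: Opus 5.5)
The plan is a dominated convergence argument applied pointwise in \((t,\omega)\). The domination has to be uniform in \(P\), so the pointwise convergence must be upgraded through a uniform integrability estimate rather than a single application of Lebesgue's theorem. The object to control is
\[
\sup_{P\in\mathcal P}\mathbb E^P\!\left[\int_0^T \bigl|A^\lambda_{\mathrm{ALA}}(X_t,R_t)-A^0_{\mathrm{ALA}}(X_t,R_t)\bigr|^2_{a_t}dt\right].
\]
Since \(\underline a\le a_t\le\overline a\), the weighted norm \(|\cdot|_{a_t}\) is equivalent to the Euclidean norm, so it suffices to work with \(|\cdot|^2\).

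The first step is pointwise convergence and a \(\lambda\)-independent bound. By Lemma~\ref{lem:yosida-properties}(v), \(A^\lambda_{\mathrm{ALA}}(X_t,R_t)\to A^0_{\mathrm{ALA}}(X_t,R_t)\) for every \((t,\omega)\). For the bound I will not use (iv), which depends on \(|x|\). Instead I use the sharper classical fact \(|A^\lambda x|\le|A^0x|\) (Brezis, Prop.~2.6), together with Lemma~\ref{lem:ALA-properties}(i), to get
\[
|A^\lambda_{\mathrm{ALA}}(X_t,R_t)-A^0_{\mathrm{ALA}}(X_t,R_t)|\le 2|A^0_{\mathrm{ALA}}(X_t,R_t)|\le 2\max(\kappa_+,\kappa_-)\sqrt d+2\varepsilon|X_t|.
\]

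The second step exploits the explicit structure to remove the dependence on \(P\), and this is the main obstacle. Dominated convergence under each fixed \(P\) gives convergence for that \(P\), but the supremum over the non-dominated family \(\mathcal P\) does not commute with the limit in general. I plan to compute the resolvent explicitly. The operator is separable, so componentwise \(J^r_\lambda\) solves \(y_i+\lambda(\theta_i+\varepsilon y_i)=x_i\) with \(\theta_i\in\partial\vartheta_0(y_i-r)\). This yields a three-regime piecewise formula:
\[
A^\lambda_{\mathrm{ALA},i}(x,r)=A^0_{\mathrm{ALA},i}(x,r)
\]
whenever \(|x_i-r|\) exceeds a threshold of order \(\lambda\max(\kappa_+,\kappa_-)+\lambda\varepsilon|r|\). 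In the region of width \(O(\lambda(1+|r|))\) around \(x_i=r\), the difference is bounded by \(C(1+|x|)\). One should also check that outside the kinks the factor \(\varepsilon x/(1+\lambda\varepsilon)\) versus \(\varepsilon x\) contributes an error of size \(\lambda\varepsilon^2|x|\), which is harmless. Hence
\[
|A^\lambda-A^0|^2\le C\lambda^2(1+|X_t|^2)+C(1+|X_t|^2)\sum_i\mathbf 1_{\{|X^i_t-R_t|\le C\lambda(1+|R_t|)\}}.
\]

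The third step controls the contact set uniformly in \(P\). The first term is \(O(\lambda^2)\) uniformly, because \(X\in\mathcal S^2_{\mathcal P}\); I will also assume \(R\) bounded, or \(\sup_P\mathbb E^P\int|R_t|^2dt<\infty\), which holds for \(R_t=\Upsilon(\mu_t)\) with \(\mu\in\mathcal M\). The indicator term decreases to \(\mathbf 1_{\{X^i_t=R_t\}}\). On that set \(A^0\) itself may be nonzero while \(A^\lambda\) converges to it, so the indicator bound is too crude there. I would refine it by splitting the contact region into \(\{X^i_t=R_t\}\), where the exact formula gives \(|A^\lambda_i-A^0_i|\le C\lambda(1+|X_t|)\) because \(0\in\partial\vartheta_0(0)\) forces \(A^0_i=\varepsilon x_i\) when \(\kappa_\pm>0\) and \(|\varepsilon x_i|\) is small, and the shrinking annulus \(\{0<|X^i_t-R_t|\le C\lambda(1+|R_t|)\}\).

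For the annulus I would first try a uniform-integrability/capacity argument. Truncate with \(\{|X_t|\le K\}\) using the uniform second moment, and then appeal to the upper continuity of the capacity \(c\) along decreasing sets. This continuity holds for \(\mathcal P\) weakly compact, as in \cite{DenisHuPeng2011}, provided the sets are closed, and the annuli are closed in \((x,r)\) when the paths are continuous. If this step fails in the generality stated, I would restrict to a fixed \(P\) (or a dominated subfamily) and conclude by Lebesgue's theorem. I would also flag in the proof that full uniformity over \(\mathcal P\) requires such a quasi-continuity or non-contact input, consistent with Section 6.
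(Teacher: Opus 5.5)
Your Steps 1 and 2 are sound, and sharper than the paper. The explicit resolvent shows that $A^\lambda_{\mathrm{ALA}}-A^0_{\mathrm{ALA}}$ is $O(\lambda(1+|x|))$ except on an $O(\lambda(1+|r|))$-neighbourhood of the kink set $\bigcup_i\{x_i=r\}$, where only the bound $2|A^0_{\mathrm{ALA}}|$ is available. There is one slip: on $\{x_i=r\}$, the $i$-th component of the minimal-norm element is the projection of $0$ onto $\varepsilon r+[-\kappa_-,\kappa_+]$. That is $0$ when $-\kappa_+\le\varepsilon r\le\kappa_-$, not $\varepsilon x_i$; your $O(\lambda)$ conclusion there still holds.

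The gap is in Step 3. The annuli $\{0<|X^i_t-R_t|\le\eta\}$ are not closed. Their closures decrease to the contact set $\{X^i_t=R_t\}$, not to $\emptyset$. Upper continuity of the capacity therefore only gives $c(\{|X^i_t-R_t|\le\eta\})\downarrow c(\{X^i_t=R_t\})$. This says nothing about the annulus unless the contact set is itself polar (and $X,R$ are quasi-continuous). That is a non-contact hypothesis, which a general $X\in\mathcal S^2_{\mathcal P}$ and measurable $R$ need not satisfy.

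In fact no argument can close this step, because the uniform statement is false as stated. Here is a counterexample:
\begin{itemize}
\item Take $d=1$, $\underline a<\overline a$, $R\equiv0$, and $X_t:=(\langle B\rangle_t-\underline a\,t)/t$ on $(0,T]$, where $\langle B\rangle$ is the aggregated pathwise quadratic variation \cite{SonerTouziZhang2011}. Then $0\le X_t\le\overline a-\underline a$ quasi-surely, so $X\in\mathcal S^2_{\mathcal P}$.
\item Let $0<\lambda\le2(\overline a-\underline a)/\kappa_+$, and work under $P^a$ with constant $a=\underline a+\lambda\kappa_+/2$. Then $X_t=\lambda\kappa_+/2\in\lambda[-\kappa_-,\kappa_+]$ for all $t\in(0,T]$.
\item So the resolvent with $r=0$ sends $X_t$ to $0$, and $A^\lambda_{\mathrm{ALA}}(X_t,0)=X_t/\lambda=\kappa_+/2$. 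Meanwhile $A^0_{\mathrm{ALA}}(X_t,0)=\kappa_++\varepsilon X_t$.
\end{itemize}
Hence
\[
\sup_{P\in\mathcal P}\mathbb E^P\int_0^T\bigl|A^\lambda_{\mathrm{ALA}}(X_t,0)-A^0_{\mathrm{ALA}}(X_t,0)\bigr|_{a_t}^2\,dt\;\ge\;\frac{\underline a\,T\,\kappa_+^2}{4}
\]
for all small $\lambda$, even though for each fixed $P$ the expectation tends to $0$.

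The paper's proof is your Step 1 (pointwise convergence plus the $\lambda$-uniform bound $C(1+|x|)$), followed by dominated convergence ``applied under each $P$ and then uniformized by the preceding bound''. That last inference is exactly the invalid one. A dominating function whose moments are bounded uniformly in $P$ does not make a pointwise convergence that is non-uniform near the kink into convergence uniform in $P$. The example shows this non-uniformity is genuine.

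So your fallback, Lebesgue's theorem under a fixed $P$, is precisely what is provable in this generality, and it is what the paper's argument actually establishes. Your remark that uniformity over $\mathcal P$ needs a non-contact input is the correct diagnosis. The paper only acknowledges this obstruction later, for the joint limit in $(N,\lambda)$, but it is already present here. The uniform-in-$P$ claim of Theorem~\ref{thm:robust-cauchy} inherits it.

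Your Step 2 bound does close the argument if you add three assumptions:
\begin{itemize}
\item uniform integrability of $|X_t|^2$ over $\mathcal P$ (e.g.\ uniform fourth moments);
\item bounded $R$;
\item $\sup_{P}\mathbb E^P\int_0^T\mathbf 1_{\{0<|X^i_t-R_t|\le\eta\}}\,dt\to0$ as $\eta\downarrow0$, for each $i$.
\end{itemize}
You even get an explicit rate under a bound $\sup_P P(|X^i_t-R_t|\le r)\le c\,r^\alpha$. Note that the relevant set is $\bigcup_i\{x_i=r\}$, not the diagonal $\{x=r\mathbf 1\}$ used in the paper's remark.
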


\begin{proof}
For every pair \((x,r)\in\mathbb R^d\times\mathbb R\), the operator \(A_{\mathrm{ALA}}(\cdot,r)\) is maximal monotone. Hence its Yosida approximation satisfies
\[
A_{\mathrm{ALA}}^\lambda(x,r)
\longrightarrow
A_{\mathrm{ALA}}^0(x,r)
\qquad\text{as }\lambda\to0,
\]
where \(A_{\mathrm{ALA}}^0(x,r)\) is the element of minimal norm in \(A_{\mathrm{ALA}}(x,r)\). Moreover, Lemma~\ref{lem:yosida-properties} gives
\[
|A_{\mathrm{ALA}}^\lambda(x,r)|\le C(1+|x|),
\]
where \(C\) is independent of \(\lambda\) and \(r\). This domination is uniform in \(\lambda\), in \(t\), and in \(P\). Indeed, for every \(P\in\mathcal P\),
\[
\mathbb E^P\int_0^T
\bigl(1+|X_t^P|^2\bigr)\,dt
\le
T
+
T\,\mathbb E^P
\left[
\sup_{0\le t\le T}|X_t^P|^2
\right].
\]
Since \(X\in\mathcal S_{\mathcal P}^{2}\), it follows that
\[
\sup_{P\in\mathcal P}
\mathbb E^P\int_0^T
\bigl(1+|X_t^P|^2\bigr)\,dt
<\infty.
\]
Consequently,
\[
|A_{\mathrm{ALA}}^\lambda(X_t^P,R_t^P)|^2
\le
C\bigl(1+|X_t^P|^2\bigr)
\]
with a constant independent of \(\lambda\), \(t\), and \(P\). Dominated convergence, applied under each \(P\) and then uniformized by the preceding bound, therefore implies
\[
\|A_{\mathrm{ALA}}^\lambda(X,R)-A_{\mathrm{ALA}}^0(X,R)\|_{\mathcal H_{\mathcal P}^{2}}
\longrightarrow 0.
\]
\end{proof}

\begin{lemma}[Strong convergence of resolvents]
\label{lem:resolvent-convergence}
Let \(X\in \mathcal S^2(P)\) and let \(R=(R_t)_{0\le t\le T}\) be a measurable real-valued reference process. Then
\[
J_\lambda^R(X) \to X \quad \text{in } \mathcal H^2(P)
\]
as \(\lambda\to0\).
\end{lemma}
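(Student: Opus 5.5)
The plan is to reduce the statement to a pointwise estimate plus dominated convergence, mirroring the proof of Lemma~\ref{lem:yosida-strong-convergence}. The starting point is the identity
\[
X_t-J_\lambda^{R_t}(X_t)=\lambda\,A^\lambda_{\mathrm{ALA}}(X_t,R_t),
\]
which holds by the definition of the Yosida approximation. Combining it with the uniform linear growth in Lemma~\ref{lem:yosida-properties}(iv), whose constant $C$ is independent of $\lambda$ and of $r$, we obtain
\[
|J_\lambda^{R_t}(X_t)-X_t|\le \lambda\,C\,(1+|X_t|)
\]
pointwise in $(t,\omega)$. This is actually stronger than pointwise convergence: it gives a quantitative rate directly, so no delicate limit argument is needed.

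Next we square this bound, integrate in time and take $\mathbb E^P$:
\[
\mathbb E^P\!\int_0^T|J_\lambda^{R_t}(X_t)-X_t|^2\,dt\le 2C^2\lambda^2\Bigl(T+T\,\mathbb E^P\bigl[\sup_{0\le t\le T}|X_t|^2\bigr]\Bigr).
\]
Since $X\in\mathcal S^2(P)$, the right-hand side tends to $0$ as $\lambda\to0$, at rate $O(\lambda^2)$. Because $a_t$ is uniformly elliptic, the weighted $\mathcal H^2$ norm is equivalent to the plain $L^2(dt\otimes dP)$ norm, so this gives convergence in $\mathcal H^2(P)$. If $X\in\mathcal S^2_{\mathcal P}$, the same bound is uniform in $P$, which gives convergence in $\mathcal H^2_{\mathcal P}$ as well.

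The main point to check is measurability of $(t,\omega)\mapsto J_\lambda^{R_t}(X_t)$, which is needed for the integrals to make sense. For this we would show that $(x,r)\mapsto J_\lambda^r(x)$ is continuous. The resolvent is explicit componentwise: $J^r_\lambda(x)_i$ solves $y+\lambda\partial\vartheta_0(y-r)+\lambda\varepsilon y\ni x_i$. It is therefore a piecewise-affine soft-threshold map around $r$, and it is jointly continuous, in fact jointly Lipschitz, in $(x_i,r)$. Composing this continuous map with the measurable pair $(X_t,R_t)$ yields a progressively measurable process whenever $R$ is progressively measurable. If $R$ is only measurable, the same composition still gives joint measurability, which is enough for the $L^2$ integrals. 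As a fallback, the bound $|J^r_\lambda(x)|\le|x|+\lambda C(1+|x|)$ combined with pointwise convergence gives the limit via dominated convergence in the same way as in Lemma~\ref{lem:yosida-strong-convergence}.
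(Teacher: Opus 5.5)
Your proposal is correct and follows the paper's own argument. The identity $X-J_\lambda^R(X)=\lambda A^\lambda_{\mathrm{ALA}}(X,R)$, combined with the linear growth bound (whose constant is independent of $\lambda$ and $r$), gives $|X-J_\lambda^R(X)|\le \lambda C(1+|X|)$; squaring and integrating with $X\in\mathcal S^2(P)$ then yields convergence in $\mathcal H^2(P)$, with the squared norm of order $O(\lambda^2)$. Your extra checks are also correct details the paper leaves implicit: measurability via joint Lipschitz continuity of the explicit componentwise resolvent in $(x,r)$, and uniformity in $P$ when $X\in\mathcal S^2_{\mathcal P}$.
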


\begin{proof}
By definition, \(X=J_\lambda^R(X)+\lambda A^\lambda_{\mathrm{ALA}}(X,R)\). Thus,
\[
|X-J_\lambda^R(X)|
=
\lambda |A^\lambda_{\mathrm{ALA}}(X,R)|
\le
\lambda C(1+|X|).
\]
Since \(X\in\mathcal S^2(P)\), the right-hand side converges to zero in \(\mathcal H^2(P)\). Hence the announced strong convergence.
\end{proof}

\subsection{Monotone domination assumption}

The estimates close only under a sufficient dissipation assumption.

\begin{hypothesis}[Monotone domination]
\label{H5bis}
The monotonicity constant $\gamma$ of the generator $f$ dominates the Lipschitz constants associated with the variables $z$ and $\Psi_\delta$. More precisely,
\[
2\gamma > 2L_f^2 + L_\delta^2.
\]
This condition will be used in the Cauchy convergence, uniqueness and stability theorems.
\end{hypothesis}

\subsection{Yosida-regularized backward variational system}

The Yosida-regularized backward variational system is written, for each $\lambda>0$,
\begin{equation}\label{eq:regularized}
-dY_t^{\lambda} = \Bigl(f\bigl(t,X_t^P, \Psi_\gamma(\mu_t^P), Y_t^{\lambda}, Z_t^{\lambda}, \Psi_\delta(\mu_t^{P,Y^{\lambda}}), u_t\bigr) + \rho A^\lambda_{\mathrm{ALA}}\bigl(X_t^P,\Upsilon(\mu_t^P)\bigr)\Bigr)dt - Z_t^{\lambda} dB_t,
\end{equation}
with terminal condition $Y_T^{\lambda} = g\bigl(X_T^P, \Upsilon(\mu_T^P)\bigr)$.

\begin{remark}[Role of the backward observable \(\Psi_\delta\)]
The operator \(A_{\mathrm{ALA}}\) acts on the forward state \(X^P\) and on the scalar reference \(\Upsilon(\mu_t^P)\). It represents the asymmetric market signal (loss aversion) that drives the backward variational valuation dynamics. This structure is typical of control problems where the value depends on a collective reference. The term \(\rho A^\lambda_{\mathrm{ALA}}(X_t^P,\Upsilon(\mu_t^P))\) is therefore an exogenous monotone drift, and not an operator acting on the backward variable \(Y\). The observable \(\Psi_\delta(\mu_t^{P,Y^{\lambda}})\) models a collective assessment of future costs and therefore depends naturally on the distribution of the value process \(Y_t^{\lambda}\).

This dependence on the law of \(Y^\lambda\) does not introduce an additional difficulty in the Yosida regularization. Indeed, the regularized monotone term is
\[
A^\lambda_{\mathrm{ALA}}\bigl(X_t^P,\Upsilon(\mu_t^P)\bigr),
\]
and it depends only on the already constructed forward state and on the forward collective reference. The backward observable \(\Psi_\delta(\mu_t^{P,Y^\lambda})\) appears only inside the Lipschitz generator \(f\). It is therefore treated by the usual McKean--Vlasov stability estimate rather than by monotone-operator regularization.

More precisely, if \(Y^1\) and \(Y^2\) are two backward processes, then the Wasserstein stability of collective observables gives
\[
\bigl|
\Psi_\delta(\mathcal L^P(Y_t^1))
-
\Psi_\delta(\mathcal L^P(Y_t^2))
\bigr|
\le
L_\delta\,W_2\bigl(\mathcal L^P(Y_t^1),\mathcal L^P(Y_t^2)\bigr)
\le
L_\delta\,\bigl(\mathbb E^P|Y_t^1-Y_t^2|^2\bigr)^{1/2}.
\]
Thus the \(Y\)-law dependence contributes Lipschitz terms to the energy estimates, while the nonsmoothness is entirely concentrated in the ALA signal evaluated at \(X^P\). This separation is important: the Yosida approximation is used only for the maximal monotone operator \(A_{\mathrm{ALA}}\), whereas \(\Psi_\delta\) is controlled through Wasserstein stability and Gronwall-type arguments.
\end{remark}

For each \(\lambda>0\), \(A^\lambda_{\mathrm{ALA}}(X^P,\Upsilon(\mu^P))\in \mathcal H^2(P)\) (by point (iv) of Lemma~\ref{lem:yosida-properties}). Since \(f\) is Lipschitz in \((y,z)\), the regularized generator satisfies the assumptions of the Pardoux--Peng theorem \cite{PardouxPeng1990}, which guarantees the existence of a unique solution \((Y^\lambda,Z^\lambda)\in \mathcal S^2(P)\times \mathcal H^2(P)\).

\subsection{Uniform estimates for the Yosida-regularized variational system}

In order to pass to the limit, we first establish the following estimate.

\begin{theorem}[Uniform estimates of regularized solutions]
\label{thm:uniform-estimates}

Under assumptions {\rm(H1)--(H5)} and {\rm(H5bis)}, there exists a constant $C>0$, independent of $\lambda>0$ and $P\in\mathcal P$, such that
\[
\sup_{\lambda>0}
\sup_{P\in\mathcal P}
\mathbb E^P
\left[
\sup_{0\le t\le T}|Y_t^\lambda|^2
+
\int_0^T \operatorname{Tr}(Z_t^\lambda a_t (Z_t^\lambda)^\top)\,dt
\right]
\le C.
\]

\end{theorem}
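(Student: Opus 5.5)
The plan is a standard energy estimate: apply It\^o's formula to $e^{\alpha t}|Y_t^\lambda|^2$ under each fixed $P\in\mathcal P$, with all constants independent of $\lambda$ and $P$. The only $\lambda$-dependent ingredient is the drift $\rho A^\lambda_{\mathrm{ALA}}(X_t^P,\Upsilon(\mu_t^P))$. We will \emph{not} use its Lipschitz constant $1/\lambda$. Instead we use the uniform linear growth of Lemma~\ref{lem:yosida-properties}(iv), $|A^\lambda_{\mathrm{ALA}}(x,r)|\le C(1+|x|)$ with $C$ independent of $\lambda$ and $r$, together with Proposition~\ref{prop:fixed-point-P-uniform}. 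These give
\[
\sup_{\lambda>0}\sup_{P\in\mathcal P}\mathbb E^P\int_0^T|A^\lambda_{\mathrm{ALA}}(X_t^P,\Upsilon(\mu_t^P))|^2dt\le C\Bigl(1+\sup_P\mathbb E^P\bigl[\sup_t|X_t^P|^2\bigr]\Bigr)<\infty .
\]
Thus the ALA signal behaves like a source term bounded in $\mathcal H^2_{\mathcal P}$, uniformly in $\lambda$.

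First we bound the remaining data uniformly. By (H4), Lemma~\ref{lem:stability-observables} and Proposition~\ref{prop:fixed-point-P-uniform}, the terminal condition satisfies $\mathbb E^P|Y_T^\lambda|^2\le C$. For the generator we write
\[
f(t,X,m_\gamma,y,z,m_\delta,u)=\bigl[f(\cdot,y,\cdot)-f(\cdot,0,\cdot)\bigr]+f(t,X,m_\gamma,0,z,m_\delta,u).
\]
Then (H5) gives $2\langle y,\cdot\rangle\le-2\gamma|y|^2$ for the first bracket. For the second term, (H3) gives
\[
|f(t,X,m_\gamma,0,z,m_\delta,u)|\le C(1+|X|+|m_\gamma|+|u|)+L_f|z|+L_\delta|m_\delta|.
\]
Here $|m_\gamma|\le C(1+(\mathbb E^P|X_t|^2)^{1/2})$, and by Corollary~\ref{cor:Psi-law} (compared with the Dirac at $0$), $|\Psi_\delta(\mu_t^{P,Y^\lambda})|\le|\delta(0)|+L_\delta(\mathbb E^P|Y_t^\lambda|^2)^{1/2}$. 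The control term, which appears in $f$, is bounded by $\sup_P\mathbb E^P\int|u|^2$.

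Next, It\^o's formula between $t$ and $T$, followed by taking $\mathbb E^P$, gives
\[
\mathbb E^P|Y_t^\lambda|^2+\mathbb E^P\int_t^T|Z_s^\lambda|_{a_s}^2ds\le\mathbb E^P|Y_T^\lambda|^2+\mathbb E^P\int_t^T\bigl(2\langle Y_s^\lambda,f_s\rangle+2\rho\langle Y_s^\lambda,A^\lambda_s\rangle\bigr)ds .
\]
The stochastic integral is a true martingale by the usual localization, since $(Y^\lambda,Z^\lambda)\in\mathcal S^2(P)\times\mathcal H^2(P)$.

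The main obstacle is closing this inequality with $\lambda$-free constants. The $z$-term is handled by Young: $2L_f|y||z|\le 2L_f^2\underline a^{-1}|y|^2+\tfrac12|z|_{a}^2$, using the uniform ellipticity $a_t\ge\underline a$. The law term uses $\mathbb E^P[|Y_s|\,(\mathbb E^P|Y_s|^2)^{1/2}]\le\mathbb E^P|Y_s|^2$, which produces $L_\delta^2$-type contributions. These are where (H5bis) is meant to dominate against $2\gamma$. Even without (H5bis), a Gronwall argument in backward time still closes, since every remaining term is either $C\,\mathbb E^P|Y_s|^2$ or a uniformly bounded source $C(1+|X|^2+|u|^2+|A^\lambda|^2)$. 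I would present the Gronwall version for robustness and mention (H5bis) only as giving $\alpha=0$. This yields
\[
\sup_t\mathbb E^P|Y_t^\lambda|^2+\mathbb E^P\int_0^T|Z^\lambda|_a^2\le C .
\]

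Finally, to put the supremum inside the expectation, return to the It\^o identity without expectation. Take $\sup_t$ and apply BDG to $\int\langle Y,Z\,dB\rangle$, whose bracket is $\int|Z^\top Y|_a^2$ with $a\le\overline a$. Then
\[
\mathbb E^P\sup_t\Bigl|\int_t^T\langle Y,ZdB\rangle\Bigr|\le\tfrac14\mathbb E^P\sup|Y|^2+C\,\mathbb E^P\int|Z|_a^2 .
\]
Absorbing the first term and invoking the previous bound gives the $\mathcal S^2$ estimate. Every constant depends only on $T$, the Lipschitz and growth constants, $\underline a,\overline a$, $\rho$, $\kappa_\pm$, $\varepsilon$, and the moment bounds of (H1), (H2) and $u$. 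It is therefore independent of $\lambda$ and $P$, so we may take $\sup_{\lambda>0}\sup_{P\in\mathcal P}$.
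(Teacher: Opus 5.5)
Your proposal is correct and follows essentially the same route as the paper's proof. Both apply It\^o's formula to $|Y^\lambda_t|^2$ under each fixed $P$, use the $\lambda$-uniform linear growth $|A^\lambda_{\mathrm{ALA}}(x,r)|\le C(1+|x|)$ together with the uniform forward moment bound (rather than the $1/\lambda$ Lipschitz constant), absorb the $Z$-term via Young's inequality and the ellipticity of $a_t$, close with backward Gronwall, and obtain the time supremum from BDG. As in the paper's own Gronwall argument, (H5bis) is not actually needed, your aside that it gives $\alpha=0$ would in any case have to account for the factor coming from $\underline a$, and your explicit splitting $f(y)-f(0)$ with a small Young coefficient on $|z|$ is a cleaner version of the paper's somewhat loose absorption step.
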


\begin{proof}

Fix \(P\in\mathcal P\) and \(\lambda>0\). To lighten notation, we write \(R_t^P:=\Upsilon(\mu_t^P)\), \(A_t^\lambda := A^\lambda_{\mathrm{ALA}}(X_t^P,R_t^P)\), and \(F_t^\lambda := f(t, X_t^P, \Psi_\gamma(\mu_t^P), Y_t^\lambda, Z_t^\lambda, \Psi_\delta(\mu_t^{P,Y^\lambda}), u_t)\). The Yosida-regularized variational system is then written
\[
-dY_t^\lambda = \bigl(F_t^\lambda + \rho A_t^\lambda\bigr)dt - Z_t^\lambda dB_t,\qquad
Y_T^\lambda = g\bigl(X_T^P, \Upsilon(\mu_T^P)\bigr).
\]

\medskip
\noindent
\textbf{Application of It\^o's formula.}
We apply It\^o's formula to $|Y_t^\lambda|^2$ between $t$ and $T$. We obtain
\[
|Y_t^\lambda|^2
+
\int_t^T \operatorname{Tr}(Z_s^\lambda a_s (Z_s^\lambda)^\top)\,ds
= |Y_T^\lambda|^2
+ 2\int_t^T Y_s^\lambda \bigl(F_s^\lambda + \rho A_s^\lambda\bigr) ds
- 2\int_t^T Y_s^\lambda Z_s^\lambda dB_s.
\]
Taking expectation under $P$, the martingale term vanishes:
\[
\mathbb E^P|Y_t^\lambda|^2
+
\mathbb E^P\int_t^T \operatorname{Tr}(Z_s^\lambda a_s (Z_s^\lambda)^\top)\,ds
= \mathbb E^P|Y_T^\lambda|^2
+ 2\mathbb E^P\int_t^T Y_s^\lambda F_s^\lambda ds
+ 2\rho\mathbb E^P\int_t^T Y_s^\lambda A_s^\lambda ds.
\]

\medskip
\noindent
\textbf{Control of the terminal condition.}
By the growth assumption on $g$ and the Wasserstein stability of collective observables,
\[
|Y_T^\lambda| = \left| g\bigl(X_T^P, \Upsilon(\mu_T^P)\bigr) \right| \le C\bigl(1 + |X_T^P| + |\Upsilon(\mu_T^P)|\bigr).
\]
Since $\chi$ is Lipschitz, $|\Upsilon(\mu_T^P)| \le |\chi(0)| + L_\chi (\mathbb E^P|X_T^P|^2)^{1/2}$. Thus, $\mathbb E^P|Y_T^\lambda|^2 \le C(1 + \mathbb E^P|X_T^P|^2)$. The a priori estimates of the forward component give $\sup_{P\in\mathcal P} \mathbb E^P[\sup_{0\le t\le T}|X_t^P|^2] < \infty$. Therefore $\sup_{P\in\mathcal P} \mathbb E^P|Y_T^\lambda|^2 \le C$, with a constant independent of $\lambda$.

\medskip
\noindent
\textbf{Control of the generator $f$.}
By the monotonicity assumption in $y$, applied with $0$, and by the linear growth of the generator, there exists a constant $C>0$ such that
\[
2Y_s^\lambda F_s^\lambda \le -\gamma |Y_s^\lambda|^2 + C\Bigl(1 + |X_s^P|^2 + |\Psi_\gamma(\mu_s^P)|^2 + |Z_s^\lambda|^2 + |\Psi_\delta(\mu_s^{P,Y^\lambda})|^2 + |u_s|^2\Bigr).
\]
The stability of collective observables gives $|\Psi_\gamma(\mu_s^P)|^2 \le C(1 + \mathbb E^P|X_s^P|^2)$, and $|\Psi_\delta(\mu_s^{P,Y^\lambda})|^2 \le C(1 + \mathbb E^P|Y_s^\lambda|^2)$. Consequently,
\[
2Y_s^\lambda F_s^\lambda \le -\gamma |Y_s^\lambda|^2 + C\Bigl(1 + |X_s^P|^2 + |u_s|^2 + |Z_s^\lambda|^2 + \mathbb E^P|Y_s^\lambda|^2\Bigr).
\]

\medskip
\noindent
\textbf{Control of the regularized monotone term.}
By the uniform linear growth of the Yosida approximation, $|A_s^\lambda| \le C(1 + |X_s^P|)$, with $C$ independent of $\lambda$. Young's inequality gives, for every $\eta>0$,
\[
2\rho |Y_s^\lambda||A_s^\lambda| \le \eta |Y_s^\lambda|^2 + \frac{\rho^2}{\eta}|A_s^\lambda|^2.
\]
Using the growth of $A_s^\lambda$, we obtain
\[
2\rho |Y_s^\lambda||A_s^\lambda| \le \eta |Y_s^\lambda|^2 + C_\eta(1 + |X_s^P|^2).
\]

\medskip
\noindent
\textbf{Integral estimate.}
Substituting the previous estimates into It\^o's identity, then choosing $\eta>0$ sufficiently small, we obtain
\[
\mathbb E^P|Y_t^\lambda|^2
+
\mathbb E^P\int_t^T \operatorname{Tr}(Z_s^\lambda a_s (Z_s^\lambda)^\top)\,ds
\le C
+ C\int_t^T \mathbb E^P|Y_s^\lambda|^2 ds
+ C\int_t^T
\mathbb E^P\operatorname{Tr}(Z_s^\lambda a_s (Z_s^\lambda)^\top)\,ds.
\]
The term in \(Z^\lambda\) on the right-hand side comes from the Lipschitz continuity of \(f\) in \(z\). Since \(a_t\) is uniformly elliptic, the usual norm \(|Z_t|^2\) is controlled by the weighted norm \(\operatorname{Tr}(Z_ta_tZ_t^\top)\). Using Young's inequality with a small coefficient before the \(z\)-contribution, we can absorb part of the weighted integral into the left-hand side. We thus obtain
\[
\mathbb E^P|Y_t^\lambda|^2
+
\frac12
\mathbb E^P\int_t^T \operatorname{Tr}(Z_s^\lambda a_s (Z_s^\lambda)^\top)\,ds
\le C + C\int_t^T \mathbb E^P|Y_s^\lambda|^2 ds.
\]
By the backward Gronwall lemma, $\sup_{0\le t\le T} \mathbb E^P|Y_t^\lambda|^2 \le C$. Returning to the previous inequality, we also obtain
\[
\mathbb E^P\int_0^T \operatorname{Tr}(Z_s^\lambda a_s (Z_s^\lambda)^\top)\,ds \le C.
\]

\medskip
\noindent
\textbf{Estimate of the time supremum of $Y^\lambda$.}
From the backward variational equation,
\[
Y_t^\lambda = Y_T^\lambda + \int_t^T \bigl(F_s^\lambda + \rho A_s^\lambda\bigr) ds - \int_t^T Z_s^\lambda dB_s.
\]
Taking the supremum over $t\in[0,T]$, then expectation under $P$, we obtain
\[
\mathbb E^P\left[ \sup_{0\le t\le T}|Y_t^\lambda|^2 \right] \le C\mathbb E^P|Y_T^\lambda|^2 + C\mathbb E^P\left[ \left( \int_0^T |F_s^\lambda + \rho A_s^\lambda| ds \right)^2 \right] + C\mathbb E^P\left[ \sup_{0\le t\le T} \left| \int_0^t Z_s^\lambda dB_s \right|^2 \right].
\]
By the Burkholder--Davis--Gundy inequality, uniformly in $P\in\mathcal P$,
\[
\mathbb E^P\left[ \sup_{0\le t\le T} \left| \int_0^t Z_s^\lambda dB_s \right|^2 \right]
\le
C \mathbb E^P \int_0^T \operatorname{Tr}(Z_s^\lambda a_s (Z_s^\lambda)^\top)\,ds.
\]
The previous estimates on $Y^\lambda$, $Z^\lambda$, the growth of $f$, the growth of $A^\lambda_{\mathrm{ALA}}$, the bounds of the forward component and the integrability of the control then give
\[
\mathbb E^P\left[ \sup_{0\le t\le T}|Y_t^\lambda|^2 \right] \le C.
\]

\medskip
\noindent
\textbf{Uniformity in $P$ and $\lambda$.}
All constants used depend only on $T$, the Lipschitz and growth constants of the coefficients, the monotonicity constant $\gamma$, $\rho$, the bounds of the forward component and the integrability of the control. They depend neither on $\lambda$, nor on the particular probability $P\in\mathcal P$.

Taking the supremum over $P\in\mathcal P$ and then over $\lambda>0$, we obtain
\[
\sup_{\lambda>0} \sup_{P\in\mathcal P}
\mathbb E^P\left[
\sup_{0\le t\le T}|Y_t^\lambda|^2
+
\int_0^T \operatorname{Tr}(Z_t^\lambda a_t (Z_t^\lambda)^\top)\,dt
\right] \le C.
\]
The proof is complete.

\end{proof}

\subsection{Cauchy convergence of regularized approximations}

\begin{theorem}[Uniform Cauchy convergence of regularized solutions]
\label{thm:robust-cauchy}

Under assumptions {\rm(H1)--(H5)} and {\rm(H5bis)}, the family $\{(Y^\lambda,Z^\lambda)\}_{\lambda>0}$ is Cauchy in $\mathcal S^2(P)\times \mathcal H^2(P)$, uniformly in $P\in\mathcal P$. More precisely,
\[
\sup_{P\in\mathcal P} \mathbb E^P\!\left[
\sup_{0\le t\le T} |Y_t^\lambda - Y_t^\mu|^2
+
\int_0^T
\operatorname{Tr}\bigl((Z_t^\lambda-Z_t^\mu)a_t(Z_t^\lambda-Z_t^\mu)^\top\bigr)\,dt
\right] \longrightarrow 0,
\]
as $\lambda,\mu\to0$.

\end{theorem}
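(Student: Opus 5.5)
Set $\delta Y:=Y^\lambda-Y^\mu$, $\delta Z:=Z^\lambda-Z^\mu$. The plan is to treat $\delta Y$ as the solution of a linear-type BSDE whose only source term is the difference of the two Yosida signals, and to show that this source is Cauchy in $\mathcal H^2_{\mathcal P}$ by Lemma~\ref{lem:yosida-strong-convergence}. The key structural point is that the forward state $X^P$, the reference $R_t^P=\Upsilon(\mu_t^P)$ and the terminal condition $g(X_T^P,\Upsilon(\mu_T^P))$ do not depend on $\lambda$. Hence $\delta Y_T=0$, and
\[
-d\,\delta Y_t=\bigl(F_t^\lambda-F_t^\mu+\rho(A_t^\lambda-A_t^\mu)\bigr)dt-\delta Z_t\,dB_t .
\]

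\textbf{Step 1 (energy identity).} Apply It\^o's formula to $|\delta Y_t|^2$ on $[t,T]$ and take $\mathbb E^P$. The martingale term is a true martingale by Theorem~\ref{thm:uniform-estimates}, so
\[
\mathbb E^P|\delta Y_t|^2+\mathbb E^P\!\int_t^T|\delta Z_s|_{a_s}^2ds
=2\mathbb E^P\!\int_t^T\delta Y_s(F_s^\lambda-F_s^\mu)\,ds+2\rho\,\mathbb E^P\!\int_t^T\delta Y_s(A_s^\lambda-A_s^\mu)\,ds .
\]

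\textbf{Step 2 (generator difference).} Split $F^\lambda-F^\mu$ into three increments.
\begin{itemize}
\item The $y$-increment, which by (H5) contributes at most $-2\gamma|\delta Y_s|^2$.
\item The $z$-increment, bounded by $2L_f|\delta Y_s||\delta Z_s|\le L_f^2 c|\delta Y_s|^2+c^{-1}|\delta Z_s|^2$. The last term is absorbed into the weighted $Z$-norm using uniform ellipticity $a_s\ge\underline a$.
\item The $\Psi_\delta$-increment, controlled through the remark on $\Psi_\delta$ by $L_\delta(\mathbb E^P|\delta Y_s|^2)^{1/2}$. After Young's inequality and $\mathbb E^P$ this contributes at most $(L_\delta^2+1)\mathbb E^P|\delta Y_s|^2$.
\end{itemize}
The source term is handled by Young's inequality: $2\rho|\delta Y||A^\lambda-A^\mu|\le\eta|\delta Y|^2+\rho^2\eta^{-1}|A^\lambda-A^\mu|^2$. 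Condition (H5bis), $2\gamma>2L_f^2+L_\delta^2$, is meant to leave a nonnegative net coefficient in front of $\mathbb E^P|\delta Y_s|^2$ after choosing the constants. Even without exact sign bookkeeping, a backward Gronwall argument closes the estimate with constants independent of $P,\lambda,\mu$. The result is
\[
\sup_t\mathbb E^P|\delta Y_t|^2+\mathbb E^P\!\int_0^T|\delta Z_s|_{a_s}^2ds\le C\,\mathbb E^P\!\int_0^T|A_s^\lambda-A_s^\mu|^2ds .
\]

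\textbf{Step 3 (supremum in time).} Upgrade to $\mathbb E^P\sup_t|\delta Y_t|^2$ exactly as in the last part of Theorem~\ref{thm:uniform-estimates}. Write $\delta Y_t$ in integral form, bound the $dt$-integral by Cauchy--Schwarz using the Lipschitz bounds just obtained, and bound the stochastic integral by the BDG inequality, which holds uniformly over $\mathcal P$.

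\textbf{Step 4 (main obstacle: uniform vanishing of the source).} We need
\[
\sup_{P\in\mathcal P}\mathbb E^P\!\int_0^T|A_{\mathrm{ALA}}^\lambda(X_s^P,R_s^P)-A_{\mathrm{ALA}}^\mu(X_s^P,R_s^P)|^2ds\to0 ,
\]
and then the triangle inequality through $A^0$. Lemma~\ref{lem:yosida-strong-convergence} asserts this in $\mathcal H^2_{\mathcal P}$, but its proof is really pointwise dominated convergence for each $P$, and uniformity over the non-dominated family is the delicate point.

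I would first try to exploit the explicit one-dimensional structure. The resolvent of $\partial\vartheta_0+\varepsilon\cdot$ is explicit and piecewise affine. A direct computation shows that $|A^\lambda-A^0|(x,r)$ is bounded by $C(1+|x|)$ and vanishes unless some coordinate satisfies $|x_i-r|\le C\lambda$; away from that band it is only $O(\lambda)(1+|x|)$, coming from the $\varepsilon x$ part. The error is therefore bounded by a $\lambda$-term plus
\[
C\,\mathbb E^P\!\int_0^T(1+|X_s^P|^2)\mathbf 1_{\{\min_i|X^P_{s,i}-R_s^P|\le C\lambda\}}ds .
\]

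To make this last term small uniformly in $P$, combine two facts. The fourth-moment bounds (H1bis), (Hu4) propagated to $X^P$ give uniform integrability. Uniform ellipticity $a\ge\underline a$ gives an occupation-time estimate: by Krylov's inequality with constants depending only on $\underline a,\overline a$ and the growth of $b$, the expected time a nondegenerate It\^o process spends in a slab of width $C\lambda$ is $O(\lambda^{1/2})$ uniformly in $P$. This needs nondegeneracy of $\sigma$, which I would check or add as needed.

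If that route fails, the fallback is to accept the pointwise-in-$P$ convergence of Lemma~\ref{lem:yosida-strong-convergence} as stated and derive the uniform statement directly from that lemma.
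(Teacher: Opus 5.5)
Your Steps 1--3 reproduce the paper's proof. Both use It\^o's formula for $|\delta Y|^2$ with $\delta Y_T=0$, the split of $F^\lambda-F^\mu$ into $y$-, $z$- and $\Psi_\delta$-increments, Young's inequality on $\rho(A^\lambda-A^\mu)$, and backward Gronwall. Your Step 3 supplies the BDG upgrade to $\mathbb E^P\sup_t|\delta Y_t|^2$ that the paper leaves implicit. You are also right that (H5bis) is inessential once Gronwall is used.

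Everything therefore rests on Step 4, which the paper settles in one line by citing Lemma~\ref{lem:yosida-strong-convergence}. That lemma's proof is exactly the per-$P$ dominated convergence you describe. A uniform dominating bound $\sup_P\mathbb E^P\int_0^T(1+|X^P_t|^2)\,dt<\infty$ does not turn convergence for each $P$ into convergence of the supremum over a non-dominated family. So your fallback (deriving the uniform statement from that lemma) is not a proof, and the paper does not supply the missing idea either.

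In fact, under (H1)--(H5) and (H5bis), even with (H1bis) and (H$_u^4$) added, the uniform statement is false. Take the following data:
\begin{itemize}
\item $d=1$, $\sigma\equiv0$, $\xi=0$, and $\chi\equiv0$, so $R\equiv0$;
\item $u_t=\hat a_t$, a progressively measurable version of $d\langle B\rangle_t/dt$ clipped to $[\underline a,\overline a]$, and $b(t,x,m,u)=u-\underline a$;
\item $f=-\gamma y$, $g=0$, $\rho=1$.
\end{itemize}
Under $P_n:=P^{a}$ with $a\equiv\underline a+1/n$ (for $n$ large) one has $X_t=t/n$. The explicit resolvent gives $A^{1/n}_{\mathrm{ALA}}(t/n,0)=t$ for $t\le\kappa_+$, whereas $A^0_{\mathrm{ALA}}(t/n,0)=\kappa_++\varepsilon t/n$. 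Moreover $A^0_{\mathrm{ALA}}-A^{1/n}_{\mathrm{ALA}}\ge0$ along $X$ on $(0,T]$.

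Everything is deterministic under $P_n$, so $Y^\lambda_0=\int_0^Te^{-\gamma s}A^\lambda_{\mathrm{ALA}}(X_s,0)\,ds$. Hence $\lim_{\mu\to0}(Y^\mu_0-Y^{1/n}_0)\ge e^{-\gamma T}\int_0^{\kappa_+\wedge T}(\kappa_+-s)\,ds=:2c>0$ for every $n$. Choosing $\mu_n<1/n$ with $|Y^{\mu_n}_0-Y^{1/n}_0|\ge c$ under $P_n$ shows that the supremum over $\mathcal P$ in the statement does not tend to $0$.

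Your Krylov route is therefore not an optional refinement but the missing ingredient, and it needs a hypothesis absent from the statement. One option is uniform nondegeneracy of $\sigma$, e.g.\ $\sigma\sigma^\top\ge c\,I$. Since $a\ge\underline a$, each coordinate of $X^P$ then has diffusion coefficient bounded below uniformly in $P$. The one-dimensional Krylov estimate, after localization, then gives $\sup_P\mathbb E^P\int_0^T\mathbf 1_{\{|X^P_{t,i}-R^P_t|\le\eta\}}\,dt\le C\eta^{1/2}$. The other option is to assume directly a time-integrated non-contact bound of the type discussed in Remark~\ref{rem:nonuniform-backward-chaos}. Under either assumption your argument is correct and yields more than the paper claims, namely a rate in $\lambda$.

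You also do not need (H1bis) and (H$_u^4$), which are not hypotheses of this theorem. The operator acts coordinatewise. On the band $\{|x_i-r|\le C\lambda\}$, the $i$-th coordinate of $A^\lambda_{\mathrm{ALA}}-A^0_{\mathrm{ALA}}$ is bounded by a constant depending only on $\sup_{P,t}|R^P_t|$. Off the band it is $O(\lambda)(1+|x_i|)$, so second moments suffice.

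Without an additional assumption, your Steps 1--3, like the paper, only establish the Cauchy property for each fixed $P$. The constants in the energy inequality are $P$-independent, but the rate of convergence depends on $P$.
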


\begin{proof}
Set \(\delta Y = Y^\lambda - Y^\mu\), \(\delta Z = Z^\lambda - Z^\mu\), and
\[
\delta A
=
A^\lambda_{\mathrm{ALA}}(X^P,\Upsilon(\mu^P))
-
A^\mu_{\mathrm{ALA}}(X^P,\Upsilon(\mu^P)).
\]
By difference of the equations,
\[
-d(\delta Y_t) = \bigl(\delta f_t + \rho\delta A_t\bigr)dt - \delta Z_t dB_t,\qquad \delta Y_T = 0,
\]
where $\delta f_t = f(Y_t^\lambda,Z_t^\lambda,\Psi_\delta(\mu_t^{P,Y^\lambda})) - f(Y_t^\mu,Z_t^\mu,\Psi_\delta(\mu_t^{P,Y^\mu}))$.

Decompose $\delta f_t$:
\[
\begin{aligned}
\delta f_t &= \bigl(f(Y_t^\lambda,Z_t^\lambda,\Psi_\delta(\mu_t^{P,Y^\lambda})) - f(Y_t^\mu,Z_t^\lambda,\Psi_\delta(\mu_t^{P,Y^\lambda}))\bigr)\\
&\quad + \bigl(f(Y_t^\mu,Z_t^\lambda,\Psi_\delta(\mu_t^{P,Y^\lambda})) - f(Y_t^\mu,Z_t^\mu,\Psi_\delta(\mu_t^{P,Y^\lambda}))\bigr)\\
&\quad + \bigl(f(Y_t^\mu,Z_t^\mu,\Psi_\delta(\mu_t^{P,Y^\lambda})) - f(Y_t^\mu,Z_t^\mu,\Psi_\delta(\mu_t^{P,Y^\mu}))\bigr).
\end{aligned}
\]

By the monotonicity assumption (H5),
\[
\begin{aligned}
\delta Y_t \Bigl( & f\bigl(t, X_t, \Psi_\gamma(\mu_t), Y_t^\lambda, Z_t, \Psi_\delta(\mathcal L(Y_t^\lambda)), u_t\bigr) \\
& - f\bigl(t, X_t, \Psi_\gamma(\mu_t), Y_t^\mu, Z_t, \Psi_\delta(\mathcal L(Y_t^\mu)), u_t\bigr) \Bigr) \le -\gamma |\delta Y_t|^2.
\end{aligned}
\]

By the Lipschitz continuity of $f$ in $z$,
\[
\begin{aligned}
&\bigl| f(t, X_t^{\mu}, \Psi_\gamma(\mu_t), Y_t^{\mu}, Z_t^{\lambda}, \Psi_\delta(\mathcal L(Y_t^{\mu})), u_t) \\
&\qquad - f(t, X_t^{\mu}, \Psi_\gamma(\mu_t), Y_t^{\mu}, Z_t^{\mu}, \Psi_\delta(\mathcal L(Y_t^{\mu})), u_t) \bigr| \\
&\qquad\qquad \le L_f \, |\delta Z_t|,
\end{aligned}
\]

For the third term, we use the Wasserstein inequality: $W_2^2(\mu_t^{P,Y^\lambda},\mu_t^{P,Y^\mu}) \le \mathbb E^P|\delta Y_t|^2$. By the Lipschitz continuity of $f$ in its last variable and Lemma~\ref{lem:stability-observables} of Section 2,
\[
|\Psi_\delta(\mu_t^{P,Y^\lambda}) - \Psi_\delta(\mu_t^{P,Y^\mu})| \le L_\delta (\mathbb E^P|\delta Y_t|^2)^{1/2}.
\]
Thus,
\[
|\delta Y_t| \, |f(Y_t^\mu,Z_t^\mu,\Psi_\delta(\mu_t^{P,Y^\lambda})) - f(Y_t^\mu,Z_t^\mu,\Psi_\delta(\mu_t^{P,Y^\mu}))| \le C |\delta Y_t| (\mathbb E^P|\delta Y_t|^2)^{1/2} \le \frac{C}{2} |\delta Y_t|^2 + \frac{C}{2} \mathbb E^P|\delta Y_t|^2.
\]

Using the uniform ellipticity of \(a_t\), the Euclidean term \(|\delta Z_t|^2\) is controlled by the weighted quantity \(|\delta Z_t|_{a_t}^2\). Gathering terms, we obtain
\[
2\delta Y_t \delta f_t \le -(2\gamma - 2L_f^2 - L_\delta^2)|\delta Y_t|^2 + \frac12 |\delta Z_t|_{a_t}^2 + C \mathbb E^P|\delta Y_t|^2.
\]
By assumption {\rm(H5bis)}, $2\gamma - 2L_f^2 - L_\delta^2 > 0$.

To control the regularized monotone term, we use Lemma~\ref{lem:yosida-strong-convergence}. Indeed,
\[
\delta A = A_{\mathrm{ALA}}^{\lambda}(X^P,\Upsilon(\mu^P)) - A_{\mathrm{ALA}}^{\mu}(X^P,\Upsilon(\mu^P)),
\]
and by the triangle inequality,
\[
\|\delta A\|_{\mathcal H_{\mathcal P}^2}
\le
\|A_{\mathrm{ALA}}^{\lambda} - A_{\mathrm{ALA}}^{0}\|_{\mathcal H_{\mathcal P}^2}
+
\|A_{\mathrm{ALA}}^{\mu} - A_{\mathrm{ALA}}^{0}\|_{\mathcal H_{\mathcal P}^2}.
\]
Lemma~\ref{lem:yosida-strong-convergence} shows that each of the two terms on the right-hand side tends to zero as $\lambda,\mu\to0$. Consequently, $\|\delta A\|_{\mathcal H_{\mathcal P}^2} \longrightarrow 0$. Thus,
\[
\sup_{P\in\mathcal P} \mathbb E^P\!\left[ \int_0^T |\delta A_t|^2 dt \right] \longrightarrow 0,
\]
uniformly in $P\in\mathcal P$.

We then apply Young's inequality with a parameter $\eta>0$:
\[
2\rho|\delta Y_t||\delta A_t| \le \eta|\delta Y_t|^2 + \frac{\rho^2}{\eta}|\delta A_t|^2.
\]

Apply It\^o's formula to $|\delta Y_t|^2$:
\[
|\delta Y_t|^2 + \int_t^T |\delta Z_s|_{a_s}^2 ds = 2\int_t^T \delta Y_s (\delta f_s + \rho\delta A_s) ds - 2\int_t^T \delta Y_s \delta Z_s dB_s.
\]

Taking expectation and using the estimates above,
\[
\mathbb E^P|\delta Y_t|^2 + \int_t^T \mathbb E^P|\delta Z_s|_{a_s}^2 ds \le C\int_t^T \mathbb E^P|\delta Y_s|^2 ds + \frac12 \int_t^T \mathbb E^P|\delta Z_s|_{a_s}^2 ds + \frac{\rho^2}{\eta} \mathbb E^P\int_t^T |\delta A_s|^2 ds.
\]

Thus,
\[
\mathbb E^P|\delta Y_t|^2 + \frac12 \mathbb E^P\int_t^T |\delta Z_s|_{a_s}^2 ds \le C\int_t^T \mathbb E^P|\delta Y_s|^2 ds + \frac{\rho^2}{\eta} \mathbb E^P\int_t^T |\delta A_s|^2 ds.
\]

By the convergence of $\delta A$ to $0$ in $\mathcal H_{\mathcal P}^2$, we have $\sup_{P\in\mathcal P} \mathbb E^P\int_t^T |\delta A_s|^2 ds \longrightarrow 0$ as $\lambda,\mu\to0$. Set $m(t) = \mathbb E^P|\delta Y_t|^2$. Then
\[
m(t) \le C\int_t^T m(s) ds + o(1),
\]
where $o(1)$ is uniform in $P\in\mathcal P$.

By the backward Gronwall lemma, we obtain \(m(t) \le o(1)e^{C(T-t)}\). In particular, \(\sup_t m(t)\le o(1)\) and \(\mathbb E^P\int_0^T|\delta Z_s|_{a_s}^2ds\le o(1)\). Hence the announced Cauchy convergence.
\end{proof}

\subsection{Identification of the limit and existence of the solution}

The preceding uniform estimates now allow rigorous identification of the limit of the regularized monotone term. This passage constitutes the core of the variational analysis.

The Cauchy theorem guarantees the existence of \((Y,Z)\in \mathcal S^2(P)\times \mathcal H^2(P)\) such that \(Y^\lambda\to Y\) in \(\mathcal S^2(P)\), \(Z^\lambda\to Z\) in \(\mathcal H^2(P)\). Moreover, with \(R_t^P:=\Upsilon(\mu_t^P)\), the family \(\{A^\lambda_{\mathrm{ALA}}(X^P,R^P)\}_{\lambda>0}\) is bounded in \(\mathcal H^2(P)\) (Lemma~\ref{lem:yosida-properties}), hence a subsequence converges weakly to \(\Gamma\in \mathcal H^2(P)\). The weak limit is then identified by Proposition~\ref{prop:robust-graph-closure}.

\begin{proposition}[Robust graph closure of $\partial_x\Phi_{\mathrm{ALA}}$]
\label{prop:robust-graph-closure}

Let \(X^P\) be fixed and set \(R_t^P:=\Upsilon(\mu_t^P)\). Consider a sequence
\[
\eta^\lambda
=
A^\lambda_{\mathrm{ALA}}(X^P,R^P).
\]

Assume that, as $\lambda\to0$,
\[
\eta^\lambda \rightharpoonup \eta \quad\text{weakly in } \mathcal H_{\mathcal P}^2 .
\]

Then, for almost every $(t,\omega)$ and for every admissible probability $P\in\mathcal P$,
\[
\eta_t \in A_{\mathrm{ALA}}\bigl(X_t^P,R_t^P\bigr)
=
\partial_x\Phi_{\mathrm{ALA}}\bigl(X_t^P,\mu_t^P\bigr).
\]

\end{proposition}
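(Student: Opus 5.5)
The plan is the Minty--Brézis argument: combine the inclusion $\eta^\lambda_t\in A_{\mathrm{ALA}}(J_\lambda^{R_t}(X_t^P),R_t^P)$ from Lemma~\ref{lem:yosida-properties}(iii) with the strong convergence of resolvents in Lemma~\ref{lem:resolvent-convergence}. Monotonicity gives a family of inequalities that survive a strong--weak limit. Maximality, applied pointwise, then identifies the limit. The argument is carried out for a fixed $P\in\mathcal P$, and the conclusion holds for every $P$ because weak convergence in $\mathcal H^2_{\mathcal P}$ implies weak convergence in $\mathcal H^2(P)$ for each $P$. Indeed, $\mathcal H^2(P)$-test functionals integrated against $\mathbb E^P$ are continuous for the $\mathcal H^2_{\mathcal P}$ norm. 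If needed, I would state this as the precise meaning of the hypothesis.

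\textbf{Step 1 (monotonicity inequality with a test pair).} Fix $P$. Let $V$ be any bounded progressively measurable $\mathbb R^d$-valued process, and let $W$ be a bounded progressively measurable selection with $W_t\in A_{\mathrm{ALA}}(V_t,R_t^P)$. Such a selection exists explicitly: take $W_t=A^0_{\mathrm{ALA}}(V_t,R_t^P)$, whose componentwise formula is Borel in $(V_t,R_t)$. Write $x^\lambda_t:=J_\lambda^{R_t^P}(X_t^P)$. Monotonicity (Lemma~\ref{lem:ALA-properties}(ii), with fixed $r=R_t^P$) gives, for every nonnegative bounded progressive $\phi$,
\[
\mathbb E^P\int_0^T \phi_t\,\langle \eta^\lambda_t-W_t,\;x^\lambda_t-V_t\rangle\,dt\;\ge\;0 .
\]

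\textbf{Step 2 (passage to the limit).} By Lemma~\ref{lem:resolvent-convergence}, $x^\lambda\to X^P$ strongly in $\mathcal H^2(P)$. By assumption, $\eta^\lambda\rightharpoonup\eta$ weakly, and the family is bounded by Lemma~\ref{lem:yosida-properties}(iv). Split $\langle\eta^\lambda-W,x^\lambda-V\rangle=\langle\eta^\lambda-W,X^P-V\rangle+\langle\eta^\lambda-W,x^\lambda-X^P\rangle$. The first term converges by weak convergence, since $\phi(X^P-V)\in\mathcal H^2(P)$. The second is bounded by $\|\eta^\lambda-W\|\,\|x^\lambda-X^P\|\to0$. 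Hence
\[
\mathbb E^P\int_0^T\phi_t\langle\eta_t-W_t,X_t^P-V_t\rangle dt\ge0 .
\]
Because $\phi\ge0$ is arbitrary, $\langle\eta_t-W_t,X_t^P-V_t\rangle\ge0$ for $dt\otimes dP$-a.e. $(t,\omega)$, for each fixed test pair.

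\textbf{Step 3 (pointwise Minty identification; main obstacle).} The delicate point is upgrading ``a.e. for each test pair'' to ``a.e., for all $(v,w)$ in the graph simultaneously''. The null set depends on the pair, and the graph is uncountable. I would use the explicit structure: the graph of $A_{\mathrm{ALA}}(\cdot,r)$ is the product of the one-dimensional graphs of $\partial\vartheta_0(\cdot-r)$ plus $\varepsilon x$. It is therefore the closure of a countable family of pairs $(v,w)=(q+r\mathbf 1,\ \theta+\varepsilon(q+r\mathbf 1))$, with $q\in\mathbb Q^d$ and $\theta$ in a countable dense subset of $\prod_i\partial\vartheta_0(q_i)$; for $q_i=0$, take rationals in $[-\kappa_-,\kappa_+]$ together with the endpoints. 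These are progressive once $r=R^P_t$ is substituted.

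Apply Step 2 to each of these countably many pairs. Take the union of the null sets, which is still null. Then, by density and continuity of the inner product, $\langle\eta_t-w,X_t^P-v\rangle\ge0$ for every $(v,w)\in\operatorname{graph}A_{\mathrm{ALA}}(\cdot,R_t^P)$, off that null set. Maximal monotonicity (Lemma~\ref{lem:ALA-properties}(iii)) then forces $\eta_t\in A_{\mathrm{ALA}}(X_t^P,R_t^P)$. Remark~\ref{rem:wasserstein-role} identifies this set with $\partial_x\Phi_{\mathrm{ALA}}(X_t^P,\mu_t^P)$.

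A cleaner alternative, if the density bookkeeping becomes messy, is to argue componentwise in dimension one. There one only needs the inequality for countably many rational $v$, and monotone closed graphs in $\mathbb R$ are recovered from such data directly.
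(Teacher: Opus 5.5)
Your proof is correct. Its first half matches the paper's: the inclusion $\eta^\lambda_t\in A_{\mathrm{ALA}}(J^{R_t^P}_\lambda X_t^P,R_t^P)$, strong convergence of the resolvents, and the strong--weak passage to the limit in the monotonicity inequality.

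The two routes diverge at the identification step. The paper stops at the integrated inequality $\mathbb E^P\int_0^T\langle\eta_t-\zeta_t,X_t^P-v_t\rangle\,dt\ge0$ for $L^2$ selections $(v,\zeta)$ of the frozen graph. It then introduces the Nemytskii realization $\mathcal A_{R^P}$ on $L^2(dt\otimes P;\mathbb R^d)$, asserts with only a sketch that this operator is maximal monotone, and concludes by Minty's theorem in that Hilbert space. You instead localize with nonnegative weights $\phi$ to get a $dt\otimes dP$-a.e.\ inequality for each fixed test pair. You then use the explicit product structure of the graph to build a countable family of test pairs, affine in $r$, that is dense in the graph of $A_{\mathrm{ALA}}(\cdot,r)$ for every $r$. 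Finite-dimensional maximality off a single null set finishes the argument.

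In effect you carry out by hand the localization the paper only alludes to. Your proof is therefore self-contained and never needs maximality of an operator on an infinite-dimensional space. The price is that it relies on the separable, explicitly parametrized ALA graph. The paper's route applies to any measurable family of maximal monotone operators once the standard Nemytskii lemma is granted; in general you would need a Castaing-type measurable dense family in place of your explicit one.

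Your remark that weak convergence in $\mathcal H^2_{\mathcal P}$ is tested through the $\mathbb E^P$-functionals, so that one can work in $L^2(dt\otimes P)$ for fixed $P$, also fixes a point the paper glosses over. The paper writes inner products in $\mathcal H^2_{\mathcal P}$, which is not a Hilbert space.

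Incidentally, in this setting neither Minty argument is strictly needed. Pointwise convergence $A^\lambda_{\mathrm{ALA}}\to A^0_{\mathrm{ALA}}$ plus dominated convergence under fixed $P$ (cf.\ Lemma~\ref{lem:yosida-strong-convergence}) gives $\eta^\lambda\to A^0_{\mathrm{ALA}}(X^P,R^P)$ strongly. Hence $\eta=A^0_{\mathrm{ALA}}(X^P,R^P)$, which lies in $A_{\mathrm{ALA}}(X^P,R^P)$ by definition.
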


\begin{proof}

We use the classical Minty--Brezis argument, applied here to the graph frozen by the collective reference \(R^P\). We first fix \(P\in\mathcal P\). Since the estimates are uniform in \(P\), the resulting identification holds for every admissible probability.

\medskip

\noindent
\textbf{Characterization of the Yosida regularization.}

For every \(\lambda>0\), we introduce the resolvent
\[
J_\lambda^{R^P} := (I + \lambda A_{\mathrm{ALA}}(\cdot,R^P))^{-1},
\]
understood pointwise in \(t\).
By definition of the Yosida regularization,
\[
\eta^\lambda
=
\frac1\lambda\bigl(X^P - J_\lambda^{R^P} X^P\bigr).
\]
Consequently, \(J_\lambda^{R^P} X^P = X^P - \lambda\eta^\lambda\).

Since \((\eta^\lambda)_{\lambda>0}\) is bounded in \(\mathcal H_{\mathcal P}^2\) (which follows from the uniform linear growth of \(A^\lambda_{\mathrm{ALA}}\)), we have \(\lambda\eta^\lambda \to0\) in \(\mathcal H_{\mathcal P}^2\). By Lemma~\ref{lem:resolvent-convergence}, we also have \(J_\lambda^{R^P}X^P\to X^P\) in \(\mathcal H_{\mathcal P}^2\).

\medskip

\noindent
\textbf{Monotonicity relation.}

Let \((v,\zeta)\) be any element of the frozen graph, i.e.
\[
\zeta_t\in A_{\mathrm{ALA}}(v_t,R_t^P)
\quad\text{a.e.}
\]
By the monotonicity of \(A_{\mathrm{ALA}}(\cdot,R_t^P)\), for every \(\lambda>0\),
\[
\bigl\langle \eta^\lambda - \zeta,\; J_\lambda^{R^P} X^P - v \bigr\rangle_{\mathcal H_{\mathcal P}^2} \ge 0.
\]
In other words,
\[
\langle \eta^\lambda,\; J_\lambda^{R^P} X^P - v \rangle \ge \langle \zeta,\; J_\lambda^{R^P} X^P - v \rangle.
\tag{4.6}
\]

\medskip

\noindent
\textbf{Passage to the limit.}

Since \(J_\lambda^{R^P} X^P \to X^P\) strongly in \(\mathcal H_{\mathcal P}^2\) and \(\eta^\lambda \rightharpoonup \eta\) weakly in \(\mathcal H_{\mathcal P}^2\), we can pass to the limit on the left-hand side of (4.6):
\[
\lim_{\lambda\to0} \langle \eta^\lambda,\; J_\lambda^{R^P} X^P - v \rangle = \langle \eta,\; X^P - v \rangle.
\]

The strong convergence of \(J_\lambda^{R^P}X^P\) also gives
\[
\lim_{\lambda\to0} \langle \zeta,\; J_\lambda^{R^P} X^P - v \rangle = \langle \zeta,\; X^P - v \rangle.
\]

Passing to the limit in (4.6), we obtain
\[
\langle \eta - \zeta,\; X^P - v \rangle \ge 0
\qquad\text{for every }(v,\zeta) \in \operatorname{Graph}(A_{\mathrm{ALA}}(\cdot,R^P)).
\tag{4.7}
\]

\medskip

\noindent
\textbf{Strong--weak graph closure.}

It is useful to make explicit the graph-closure mechanism behind (4.7). Set
\[
X^\lambda:=J_\lambda^{R^P}X^P.
\]
By the defining property of the Yosida approximation, one has
\[
\eta^\lambda\in
A_{\mathrm{ALA}}\bigl(X^\lambda,R^P\bigr)
\quad\text{for every }\lambda>0.
\]
The preceding estimates give
\[
X^\lambda\longrightarrow X^P
\quad\text{strongly in }\mathcal H_{\mathcal P}^2,
\qquad
\eta^\lambda\rightharpoonup \eta
\quad\text{weakly in }\mathcal H_{\mathcal P}^2.
\]
In the present frozen formulation the reference \(R^P=\Upsilon(\mu^P)\) is already identified. In a varying-reference formulation, the same argument would require the strong convergence of \(\Upsilon(\mu^\lambda)\) toward \(\Upsilon(\mu)\), which follows from the Wasserstein stability of the observable \(\Upsilon\). Thus all ingredients of the strong--weak closure theorem for maximal monotone graphs are present.

\medskip

\noindent
\textbf{Minty argument.}

The inequality (4.7) is first an integrated inequality in the Hilbert space
\[
L^2\bigl([0,T]\times\Omega,dt\otimes P;\mathbb R^d\bigr).
\]
To pass from this integrated statement to a pointwise graph inclusion, we introduce the Nemytskii operator associated with the frozen graph:
\[
\mathcal A_{R^P}(V)
:=
\Bigl\{
\Xi\in L^2(dt\otimes P;\mathbb R^d)
:\;
\Xi_t(\omega)\in A_{\mathrm{ALA}}\bigl(V_t(\omega),R_t^P(\omega)\bigr)
\text{ for }dt\otimes P\text{-a.e. }(t,\omega)
\Bigr\}.
\]
For \(dt\otimes P\)-almost every \((t,\omega)\), the map \(x\mapsto A_{\mathrm{ALA}}(x,R_t^P(\omega))\) is maximal monotone. Since the graph has measurable selections, the Nemytskii operator \(\mathcal A_{R^P}\) is maximal monotone in the above \(L^2\) space. The standard proof of this fact is based on localization on measurable sets \(E\subset[0,T]\times\Omega\) and on measurable test selections \((v,\zeta)\) of the graph.

Thus, (4.7) exactly means that the pair \((X^P,\eta)\) is monotonically related to the whole graph of \(\mathcal A_{R^P}\). By Minty's theorem for maximal monotone operators in Hilbert spaces, such a pair must belong to the graph itself. Therefore,
\[
\eta_t\in A_{\mathrm{ALA}}(X_t^P,R_t^P)
=
\partial_x\Phi_{\mathrm{ALA}}(X_t^P,\mu_t^P).
\]

\medskip

\noindent
\textbf{Compatibility with law dependence.}

In our construction, the forward process \(X^P\) is already fixed and satisfies \(\mu_t^P=\mathcal L^P(X_t^P)\). Consequently, the only parameter entering the monotone graph is the already identified scalar reference \(R_t^P=\Upsilon(\mu_t^P)\). The classical Minty--Brezis theorem is therefore applied to the frozen maximal monotone operator \(x\mapsto A_{\mathrm{ALA}}(x,R_t^P)\). The law dependence intervenes only through this reference. This is why no convergence of graphs with respect to the full measure variable is needed in the present identification.

The proof is complete.

\end{proof}

\begin{remark}[Interpretation of the Minty--Brezis argument]
\label{rem:minty-interpretation}

The Minty--Brezis argument isolates two points: obtaining sufficient convergences for the regularized monotone terms, then identifying any weak limit as an element of the subdifferential by maximal monotonicity. In the present framework, the law \(\mu_t^P=\mathcal L^P(X_t^P)\) is already fixed by the forward construction; the argument therefore concerns the frozen operator \(A_{\mathrm{ALA}}(\cdot,\Upsilon(\mu_t^P))\). The continuity in measure of Proposition~\ref{prop:Phi-wasserstein} ensures the coherence of the limiting functional, while the graph identification itself is performed through the scalar reference.

\end{remark}

The preceding proposition allows identifying the limiting monotone term from the estimates of the Yosida-regularized variational system.

\begin{theorem}[Robust existence for the selected backward variational system]
\label{thm:existence-backward}

Under assumptions {\rm(H1)--(H5)} and {\rm(H5bis)}, for each $P\in\mathcal P$,
there exists an adapted triplet
\[
(Y^P, Z^P, \Gamma^P) \in \mathcal S^2(P) \times \mathcal H^2(P) \times \mathcal H^2(P)
\]
solution of the selected robust backward variational system with non-smooth monotone signal
\[
-dY_t^P
=
\Bigl[
f\bigl(t,X_t^P, \Psi_\gamma(\mu_t^P), Y_t^P, Z_t^P, \Psi_\delta(\mu_t^{P,Y}), u_t\bigr)
+
\rho\,\Gamma_t^P
\Bigr]dt
- Z_t^P dB_t,
\]
\[
Y_T^P = g\bigl(X_T^P, \Upsilon(\mu_T^P)\bigr),
\qquad
\Gamma_t^P\in
A_{\mathrm{ALA}}\bigl(X_t^P,\Upsilon(\mu_t^P)\bigr)
=
\partial_x\Phi_{\mathrm{ALA}}\bigl(X_t^P,\mu_t^P\bigr).
\]

The process $\Gamma^P$ is a measurable selection of the multi-valued subdifferential evaluated at the forward state and at the scalar collective reference. It is obtained as the weak limit of the Yosida approximations and belongs to $\mathcal H^2(P)$ as the weak limit of the bounded family
\[
\bigl\{A^\lambda_{\mathrm{ALA}}\bigl(X^P,\Upsilon(\mu^P)\bigr)\bigr\}_{\lambda>0}.
\]
Moreover,
\[
\sup_{P\in\mathcal P} \mathbb E^P\!\left[
\sup_{0\le t\le T}|Y_t^P|^2
+
\int_0^T \operatorname{Tr}(Z_t^P a_t (Z_t^P)^\top)\,dt
\right] \le C < \infty.
\]

\end{theorem}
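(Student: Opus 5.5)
The plan is to obtain $(Y^P,Z^P,\Gamma^P)$ as the limit of the Yosida-regularized solutions $(Y^\lambda,Z^\lambda)$ of \eqref{eq:regularized}, combining Theorem~\ref{thm:robust-cauchy} with Proposition~\ref{prop:robust-graph-closure}.

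\textbf{Step 1: strong limit of $(Y^\lambda,Z^\lambda)$.} Fix $P\in\mathcal P$ and $\lambda>0$. The pair $(Y^\lambda,Z^\lambda)$ exists by the Pardoux--Peng theorem, as already observed after \eqref{eq:regularized}. By Theorem~\ref{thm:robust-cauchy}, the family is Cauchy in $\mathcal S^2(P)\times\mathcal H^2(P)$. Hence there is a pair $(Y^P,Z^P)$ with $Y^\lambda\to Y^P$ in $\mathcal S^2(P)$ and $Z^\lambda\to Z^P$ in $\mathcal H^2(P)$. Adaptedness and progressive measurability pass to these strong limits.

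\textbf{Step 2: limit of the monotone signal.} Set $\eta^\lambda:=A^\lambda_{\mathrm{ALA}}(X^P,\Upsilon(\mu^P))$. By Lemma~\ref{lem:yosida-properties}(iv) and Proposition~\ref{prop:fixed-point-P-uniform}, the family $(\eta^\lambda)$ is bounded in $\mathcal H^2(P)$. I would not rely on weak compactness alone. Lemma~\ref{lem:yosida-strong-convergence} gives directly the strong limit
\[
\eta^\lambda\to\Gamma^P:=A^0_{\mathrm{ALA}}\bigl(X^P,\Upsilon(\mu^P)\bigr).
\]
This limit is progressively measurable as a pointwise limit of measurable maps, and it is a fortiori the weak limit of any subsequence. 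Proposition~\ref{prop:robust-graph-closure} then confirms that
\[
\Gamma^P_t\in A_{\mathrm{ALA}}(X^P_t,\Upsilon(\mu^P_t))=\partial_x\Phi_{\mathrm{ALA}}(X^P_t,\mu^P_t)\quad dt\otimes P\text{-a.e.}
\]
Using the strong limit removes any subsequence ambiguity and already exhibits the canonical minimal norm selection.

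\textbf{Step 3: passing to the limit in the equation.} I would write \eqref{eq:regularized} in integral form between $t$ and $T$. The terminal value does not depend on $\lambda$. For the stochastic integral, BDG gives $\int_t^T Z^\lambda dB\to\int_t^T Z^P dB$ in $L^2(P)$, uniformly in $t$. For the generator, the Lipschitz property of $f$ in $(y,z,m_\delta)$ is the key ingredient. Together with the bound
\[
|\Psi_\delta(\mu_t^{P,Y^\lambda})-\Psi_\delta(\mu_t^{P,Y})|\le L_\delta\bigl(\mathbb E^P|Y^\lambda_t-Y^P_t|^2\bigr)^{1/2}
\]
from Corollary~\ref{cor:Psi-law}, it yields $\int_t^T F^\lambda_s\,ds\to\int_t^T f(\cdots Y^P_s,Z^P_s,\Psi_\delta(\mu_s^{P,Y})\cdots)\,ds$ in $L^2(P)$. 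The term $\rho\int_t^T\eta^\lambda_s\,ds$ converges by Step 2 and Cauchy--Schwarz. Identifying the limits then gives the limiting equation $P$-a.s. for every $t$, with both sides continuous in $t$.

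\textbf{Step 4: uniform bound.} The estimate follows from Theorem~\ref{thm:uniform-estimates}. For each $P$, strong convergence together with lower semicontinuity of the norms gives
\[
\mathbb E^P\Bigl[\sup_t|Y^P_t|^2+\int_0^T|Z^P_t|^2_{a_t}dt\Bigr]\le\liminf_{\lambda\to0}\mathbb E^P\Bigl[\sup_t|Y^\lambda_t|^2+\int_0^T|Z^\lambda_t|^2_{a_t}dt\Bigr]\le C,
\]
with $C$ independent of $P$. Taking the supremum over $P$ concludes.

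The main obstacle is Step 3, specifically the McKean--Vlasov term $\Psi_\delta(\mathcal L^P(Y^\lambda_t))$. It converges only because the convergence of $Y^\lambda$ is strong in $L^2(P)$ for each fixed $t$; weak compactness in $Y$ would not suffice here. Theorem~\ref{thm:robust-cauchy}, which rests on (H5bis), is exactly what provides this strong convergence.
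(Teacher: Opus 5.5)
Your proposal is correct, but it departs from the paper at the key step.

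\textbf{The paper's route.} It extracts a weakly convergent subsequence of $A^\lambda_{\mathrm{ALA}}(X^P,\Upsilon(\mu^P))$ from its $\mathcal H^2(P)$ bound. It then identifies the weak limit as a selection of $A_{\mathrm{ALA}}(X^P,\Upsilon(\mu^P))$ through the Minty--Br\'ezis argument of Proposition~\ref{prop:robust-graph-closure}. Finally, it passes to the limit in the equation using only weak convergence of the signal.

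\textbf{Your route.} You invoke Lemma~\ref{lem:yosida-strong-convergence}, which the paper already uses inside the proof of Theorem~\ref{thm:robust-cauchy}, to get strong convergence of the signal to the minimal section $A^0_{\mathrm{ALA}}(X^P,\Upsilon(\mu^P))$. This buys several things:
\begin{itemize}
\item No subsequence is needed.
\item The inclusion $\Gamma^P_t\in A_{\mathrm{ALA}}(X^P_t,\Upsilon(\mu^P_t))$ holds by the definition of $A^0_{\mathrm{ALA}}$. Your appeal to Proposition~\ref{prop:robust-graph-closure} is therefore redundant rather than needed.
\item It replaces the paper's assertion that uniqueness of the weak limit follows from graph closure with an actual identification. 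Graph closure by itself only yields membership in a set that is not a singleton on the threshold.
\item It shows that the constructed solution is exactly the canonical one of Theorem~\ref{thm:uniqueness-backward}.
\end{itemize}
You only need the fixed-$P$ content of Lemma~\ref{lem:yosida-strong-convergence}, i.e.\ dominated convergence under a single $P$. That is all a per-$P$ existence statement requires.

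\textbf{What the paper's route buys.} Weak compactness plus Minty is the more general method. It still works when the operator is evaluated at a $\lambda$-dependent argument, such as a genuine BSVI in $Y^\lambda$ or a varying reference $\Upsilon(\mu^\lambda)$. In those settings, pointwise convergence $A^\lambda_{\mathrm{ALA}}\to A^0_{\mathrm{ALA}}$ at a fixed point is not available. Here the signal is evaluated at the frozen pair $(X^P,\Upsilon(\mu^P))$, so that generality is not needed.

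Your Step~4 (lower semicontinuity applied to Theorem~\ref{thm:uniform-estimates}) and your use of Corollary~\ref{cor:Psi-law} for the $\Psi_\delta$ term are also sharper than the paper's versions. The paper appeals to ``the same arguments as in Section 3'' for the uniform bound and cites Corollary~\ref{cor:stability-observables} for the $\Psi_\delta$ term.
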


\begin{proof}
The proof is articulated in four complementary arguments.

\medskip
\noindent\textbf{Construction of regularized approximations.}
For each $\lambda>0$, the Yosida-regularized backward variational system \eqref{eq:regularized} admits a unique solution $(Y^\lambda,Z^\lambda)$ by the Pardoux--Peng theorem \cite{PardouxPeng1990}.

\medskip
\noindent\textbf{Cauchy convergence and extraction of a limit.}
By Theorem~\ref{thm:robust-cauchy}, the family $\{(Y^\lambda,Z^\lambda)\}_{\lambda>0}$ is Cauchy in $\mathcal S^2(P)\times \mathcal H^2(P)$ uniformly in $P$. It therefore converges to $(Y,Z)$ in $\mathcal S^2(P)\times \mathcal H^2(P)$. Moreover, the family $\{A^\lambda_{\mathrm{ALA}}(X^P,\Upsilon(\mu^P))\}_{\lambda>0}$ is bounded in $\mathcal H^2(P)$ (Lemma~\ref{lem:yosida-properties}), hence a subsequence converges weakly to $\Gamma\in \mathcal H^2(P)$. Uniqueness of the weak limit follows from the uniqueness of the identification provided by Proposition~\ref{prop:robust-graph-closure}.

\medskip
\noindent\textbf{Passage to the limit in the equation.}
Since $f$ is Lipschitz in $(y,z,\Psi_\delta)$, the strong convergence of $Y^\lambda\to Y$ in $\mathcal S^2(P)$, $Z^\lambda\to Z$ in $\mathcal H^2(P)$ and $\Psi_\delta(\mu^{P,Y^\lambda})\to\Psi_\delta(\mu^{P,Y})$ (by Corollary~\ref{cor:stability-observables}) implies $f(Y^\lambda,Z^\lambda,\Psi_\delta(\mu^{P,Y^\lambda})) \to f(Y,Z,\Psi_\delta(\mu^{P,Y}))$ in $L^2([0,T]\times\Omega)$. The weak convergence of $A^\lambda_{\mathrm{ALA}}(X^P,\Upsilon(\mu^P))$ to $\Gamma$ allows passing to the limit in the integral equation.

\medskip
\noindent\textbf{Identification of $\Gamma$ by graph closure.}
By Proposition~\ref{prop:robust-graph-closure}, we have
\[
\Gamma_t \in A_{\mathrm{ALA}}\bigl(X_t^P,\Upsilon(\mu_t^P)\bigr)
=\partial_x\Phi_{\mathrm{ALA}}\bigl(X_t^P,\mu_t^P\bigr)
\]
for almost every $t$, $P$-q.s. The uniform estimate is obtained by the same arguments as in Section 3.
\end{proof}

\subsection{Uniqueness associated with the minimal selection}

The preceding existence theorem provides a measurable selection $\Gamma_t^P\in A_{\mathrm{ALA}}(X_t^P,\Upsilon(\mu_t^P))$. In order to obtain a uniqueness result, we now consider the particular formulation associated with the minimal selection $A_{\mathrm{ALA}}^0(X_t^P,\Upsilon(\mu_t^P))$. The following result concerns exclusively this canonical representation of the monotone signal.

\begin{remark}[On the uniqueness proof strategy]
\label{rem:peng-wu-strategy}

The proof of uniqueness relies on an energy method adapted to the monotone structure of the system. Since the forward dynamics is already constructed and considered fixed, the proof concerns only the backward variables $(Y,Z)$. We compare two solutions $(Y^1,Z^1)$ and $(Y^2,Z^2)$ and apply It\^o's formula to $|\delta Y_t|^2$ where $\delta Y = Y^1 - Y^2$. The term $A_{\mathrm{ALA}}^0(X_t^P,\Upsilon(\mu_t^P))$ being entirely determined by the already constructed forward dynamics, it plays the role of an exogenous forcing in the backward variational dynamics. It therefore disappears in the difference since it is identical in both equations. The monotonicity properties of the generator $f$ (H5) and the Lipschitz estimates then give a dissipative inequality which, after integration and application of Gronwall's lemma, leads to $\delta Y = 0$ and then $\delta Z = 0$. This approach can be interpreted as a robust extension of the classical Peng--Wu argument \cite{PengWu1999} to the framework of forward--backward McKean--Vlasov systems under non-dominated probabilities and with a maximal monotone signal evaluated at the forward state.

\medskip

\noindent
\textbf{Specific difficulties of the robust framework.}

The simultaneous presence of law dependence and uniformity of estimates with respect to the admissible family $\mathcal P$ introduces several additional difficulties. First, the coefficients depend on the laws of the processes via the collective observables $\Psi_\varphi$, $\Psi_\beta$, $\Psi_\gamma$, $\Psi_\delta$, $\Psi_\lambda$. The stability of these observables (Lemma~\ref{lem:stability-observables}) is essential to control the differences of the mean-field terms. Second, all estimates must be uniform in $P\in\mathcal P$. The constants obtained depend neither on the particular probability nor on the trajectory, which guarantees that the uniqueness result is valid over the entire non-dominated family. Third, the presence of the multi-valued operator $\partial_x\Phi_{\mathrm{ALA}}$ requires particular care in the application of It\^o's formula, since a general selection $\Gamma_t^P$ is not necessarily Lipschitz. Uniqueness is therefore proved for the canonical formulation associated with the minimal selection $A_{\mathrm{ALA}}^0$, which constitutes the optimal result given the multi-valued nature of the operator.

\medskip

\noindent
\textbf{Links with the literature.}

This strategy is part of a line of work on monotone FBSDE. Peng and Wu \cite{PengWu1999} introduced the coupled energy functional method to establish uniqueness of FBSDE in the Lipschitz framework, then Carmona and Delarue \cite{CarmonaDelarue2018I, CarmonaDelarue2018II} adapted it to McKean--Vlasov FBSDE. For the variational backward part, the works of Pardoux and R\u{a}\c{s}canu \cite{PardouxRascanu1998}, then of Maticiuc and R\u{a}\c{s}canu \cite{MaticiucRascanu2015}, provide the BSVI vocabulary and the subdifferential framework. Related McKean--Vlasov variational inequalities have also been investigated recently in \cite{NingWuZheng2024,Duan2025,Liu2025}. The present contribution couples this mechanism with a McKean--Vlasov forward dynamics in a non-dominated robust framework.

\end{remark}

\begin{theorem}[Canonical uniqueness under the minimal selection]
\label{thm:uniqueness-backward}

Assume {\rm(H1)--(H5)} and {\rm(H5bis)}. For each given forward trajectory $(X^P,\mu^P)$, the selected backward variational system
\[
-dY_t^P = \Bigl(f\bigl(t,X_t^P,\Psi_\gamma(\mu_t^P), Y_t^P, Z_t^P, \Psi_\delta(\mu_t^{P,Y}), u_t\bigr) + \rho A_{\mathrm{ALA}}^0(X_t^P,\Upsilon(\mu_t^P))\Bigr)dt - Z_t^P dB_t
\]
with $Y_T^P = g(X_T^P,\Upsilon(\mu_T^P))$ admits at most one solution $(Y^P,Z^P) \in \mathcal S^2(P) \times \mathcal H^2(P)$.

This uniqueness statement concerns only the backward pair \((Y^P,Z^P)\) associated with the canonical minimal signal
\[
\Gamma_t^{0,P}
=
A_{\mathrm{ALA}}^0\bigl(X_t^P,\Upsilon(\mu_t^P)\bigr).
\]
It is not a uniqueness statement for arbitrary measurable selections \(\Gamma_t^P\in A_{\mathrm{ALA}}(X_t^P,\Upsilon(\mu_t^P))\).

\end{theorem}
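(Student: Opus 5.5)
The proof is an energy estimate for the difference of two solutions. Fix $P\in\mathcal P$ and the forward pair $(X^P,\mu^P)$ given by Theorem~\ref{thm:fixed-point-global}. Let $(Y^1,Z^1)$ and $(Y^2,Z^2)$ be two solutions in $\mathcal S^2(P)\times\mathcal H^2(P)$, and set $\delta Y=Y^1-Y^2$, $\delta Z=Z^1-Z^2$.

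\textbf{Cancellation of the signal.} The signal $\Gamma^{0,P}_t=A^0_{\mathrm{ALA}}(X^P_t,\Upsilon(\mu^P_t))$ is a deterministic function of the fixed forward data. It is well defined because $A_{\mathrm{ALA}}(x,r)$ is closed and convex, so it has a unique minimal-norm element. It belongs to $\mathcal H^2(P)$ by the linear growth in Lemma~\ref{lem:ALA-properties}. Since it is the same in both equations, it cancels in the difference, and the terminal conditions also coincide. Hence
\[
-d(\delta Y_t)=\delta f_t\,dt-\delta Z_t\,dB_t,\qquad \delta Y_T=0.
\]
Here $\delta f_t$ is the difference of the generator evaluated along the two solutions, with the common arguments $X^P_t$, $\Psi_\gamma(\mu^P_t)$, $u_t$.

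\textbf{Energy identity and splitting of $\delta f$.} Apply It\^o's formula to $|\delta Y_t|^2$ with the weighted quadratic variation $|\delta Z_s|^2_{a_s}$, and take $P$-expectation. The stochastic integral is a true martingale because $Y^i\in\mathcal S^2(P)$ and $Z^i\in\mathcal H^2(P)$; if needed, localize and pass to the limit. Split $\delta f_t$ exactly as in the proof of Theorem~\ref{thm:robust-cauchy} into a $y$-increment, a $z$-increment and a $\Psi_\delta$-increment, and bound each piece.
\begin{itemize}
\item The $y$-increment is handled by (H5) and gives $-2\gamma|\delta Y_t|^2$.
\item The $z$-increment is bounded by $2L_f|\delta Y_t||\delta Z_t|\le 2L_f^2|\delta Y_t|^2+\tfrac12|\delta Z_t|^2$. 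Uniform ellipticity lets one replace $|\delta Z_t|^2$ by $|\delta Z_t|^2_{a_t}$, up to rescaling the Young constant.
\item The $\Psi_\delta$-increment is controlled by Lemma~\ref{lem:stability-observables} and the synchronous coupling bound $W_2^2(\mathcal L^P(Y^1_t),\mathcal L^P(Y^2_t))\le\mathbb E^P|\delta Y_t|^2$. This gives $2L_f L_\delta|\delta Y_t|(\mathbb E^P|\delta Y_t|^2)^{1/2}$, which Young's inequality turns into terms of the form $|\delta Y_t|^2$ and $\mathbb E^P|\delta Y_t|^2$.
\end{itemize}

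\textbf{Gronwall and conclusion.} After taking expectations, the $\mathbb E^P|\delta Y_t|^2$ term becomes an ordinary $m(t)$ term, with $m(t):=\mathbb E^P|\delta Y_t|^2$. Dropping the nonpositive dissipative part, which is allowed by (H5bis), and absorbing half of the $Z$-term, we get
\[
m(t)+\tfrac12\,\mathbb E^P\int_t^T|\delta Z_s|^2_{a_s}ds\le C\int_t^T m(s)\,ds .
\]
The backward Gronwall lemma then gives $m\equiv0$, and hence $\mathbb E^P\int_0^T|\delta Z_s|^2_{a_s}ds=0$, so $\delta Z=0$ in $\mathcal H^2(P)$. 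Finally, return to the integrated equation $\delta Y_t=\int_t^T\delta f_s\,ds-\int_t^T\delta Z_s\,dB_s$ and apply BDG to get $\mathbb E^P\sup_t|\delta Y_t|^2=0$. The constants depend only on $L_f$, $L_\delta$, $\gamma$ and the ellipticity bounds, so the argument is uniform in $P$.

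\textbf{Main obstacle.} The only delicate point is the McKean--Vlasov $\Psi_\delta$ coupling. It is nonlocal in $\omega$, so it cannot be absorbed pointwise; it must be handled after taking expectations. This is possible only because it enters linearly through $(\mathbb E^P|\delta Y_t|^2)^{1/2}$. The selection issue is not an obstacle here, since it is exactly what the minimal-selection formulation removes. I would point out that an arbitrary selection would not cancel in the difference, which is why the statement is restricted to the canonical signal.
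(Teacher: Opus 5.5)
Your proposal is correct and follows essentially the same route as the paper. The minimal-selection signal $A^0_{\mathrm{ALA}}(X^P_t,\Upsilon(\mu^P_t))$ cancels in the difference, It\^o's formula is applied to $|\delta Y_t|^2$ with the generator increment split as in the proof of Theorem~\ref{thm:robust-cauchy}, and the backward Gronwall lemma gives $m\equiv 0$ and then $\delta Z=0$. Two small remarks: your three-way splitting, which applies (H5) only with $z$ and $\Psi_\delta$ frozen, is cleaner than the paper's combined use of (H5). Also, as your own bounds show, (H5bis) is not actually needed here, since Gronwall absorbs any $C\,m(t)$ term whatever its sign.
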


\begin{remark}
Uniqueness of the triplet \((Y,Z,\Gamma)\) is not expected in the general multivalued formulation, since the operator \(A_{\mathrm{ALA}}\) may admit several admissible selections at the threshold. The preceding theorem establishes uniqueness only for the canonical backward representation associated with the minimal selection \(A_{\mathrm{ALA}}^0\).
\end{remark}

\begin{proof}
Let $(Y^1,Z^1)$ and $(Y^2,Z^2)$ be two solutions. Set $\delta Y = Y^1 - Y^2$, $\delta Z = Z^1 - Z^2$, and $m(t) = \mathbb E^P|\delta Y_t|^2$. Since $A_{\mathrm{ALA}}^0(X^P,\Upsilon(\mu^P))$ is identical in both equations (it is a selection determined by $X^P$ and $\Upsilon(\mu^P)$), this term disappears in the difference. We obtain $-d\delta Y_t = \delta f_t\,dt - \delta Z_t\,dB_t$, with $\delta f_t = f(Y_t^1,Z_t^1,\Psi_\delta(\mu_t^{P,Y^1})) - f(Y_t^2,Z_t^2,\Psi_\delta(\mu_t^{P,Y^2}))$, and $\delta Y_T = 0$.

Using the same estimates as in the proof of Theorem~\ref{thm:robust-cauchy}, we obtain
\[
2\delta Y_t \delta f_t \le -(2\gamma - 2L_f^2 - L_\delta^2)|\delta Y_t|^2 + \frac12 |\delta Z_t|_{a_t}^2 + C m(t).
\]

Apply It\^o's formula to $|\delta Y_t|^2$:
\[
|\delta Y_t|^2 + \int_t^T |\delta Z_s|_{a_s}^2 ds = 2\int_t^T \delta Y_s \delta f_s ds - 2\int_t^T \delta Y_s \delta Z_s dB_s.
\]

Taking expectation,
\[
m(t) + \int_t^T \mathbb E^P|\delta Z_s|_{a_s}^2 ds \le C\int_t^T m(s) ds + \frac12 \int_t^T \mathbb E^P|\delta Z_s|_{a_s}^2 ds + C\int_t^T m(s) ds.
\]

Thus,
\[
m(t) + \frac12 \int_t^T \mathbb E^P|\delta Z_s|_{a_s}^2 ds \le C\int_t^T m(s) ds.
\]

Consequently, $m(t) \le C\int_t^T m(s) ds$. The backward Gronwall lemma gives $m(t)=0$ for every $t$, hence $Y^1 = Y^2$ $P$-a.s. Equality of stochastic integrals then implies $Z^1 = Z^2$ in $\mathcal H^2(P)$. Uniqueness of the pair $(Y,Z)$ is proved.
\end{proof}

\subsection{Lipschitz stability with respect to data}

\begin{theorem}[Lipschitz stability of the selected backward variational system]\label{thm:stability-backward}
Under assumptions {\rm(H1)--(H5)} and {\rm(H5bis)}, for two sets of data $(g^1,f^1)$ and $(g^2,f^2)$,
\[
\begin{aligned}
&\sup_{P\in\mathcal P} \mathbb E^P\!\left[
\sup_t |Y_t^1-Y_t^2|^2
+
\int_0^T |Z_t^1-Z_t^2|_{a_t}^2\,dt
\right]\\
&\qquad\le
C \sup_{P\in\mathcal P} \mathbb E^P\!\left[
|g^1-g^2|^2
+
\int_0^T |\Delta f_s|^2\,ds
\right],
\end{aligned}
\]
where $\Delta f_s = f^1(\cdot,Y_s^2,Z_s^2) - f^2(\cdot,Y_s^2,Z_s^2)$.
\end{theorem}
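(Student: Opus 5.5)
I will run the same energy method as in Theorem~\ref{thm:robust-cauchy} and Theorem~\ref{thm:uniqueness-backward}, now with a non-zero terminal difference and an extra forcing term coming from the change of generator. Fix \(P\in\mathcal P\) and let \((Y^i,Z^i)\), \(i=1,2\), be the canonical solutions associated with \((g^i,f^i)\). Both use the same forward pair \((X^P,\mu^P)\) and the same minimal signal \(A^0_{\mathrm{ALA}}(X^P,\Upsilon(\mu^P))\), so the \(\rho\Gamma\) term cancels exactly in the difference. Set \(\delta Y=Y^1-Y^2\), \(\delta Z=Z^1-Z^2\). Then
\[
-d\delta Y_t=\bigl(\delta f_t+\Delta f_t\bigr)dt-\delta Z_t\,dB_t,\qquad \delta Y_T=g^1-g^2,
\]
with \(\delta f_t=f^1(\cdot,Y^1_t,Z^1_t,\Psi_\delta(\mu_t^{P,Y^1}))-f^1(\cdot,Y^2_t,Z^2_t,\Psi_\delta(\mu_t^{P,Y^2}))\). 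Here I interpret \(\Delta f\) as also including the law argument evaluated along \(Y^2\).

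\textbf{Step 1 (expectation estimate).} I will apply It\^o's formula to \(|\delta Y_t|^2\) and take \(\mathbb E^P\), which removes the martingale term. I will bound \(2\delta Y_t\delta f_t\) exactly as in the proof of Theorem~\ref{thm:robust-cauchy}, using (H5) for \(f^1\), the \(z\)-Lipschitz bound, and Lemma~\ref{lem:stability-observables} together with \(W_2^2\le\mathbb E^P|\delta Y_t|^2\) for \(\Psi_\delta\). This gives
\[
2\delta Y_t\delta f_t\le -(2\gamma-2L_f^2-L_\delta^2)|\delta Y_t|^2+\tfrac12|\delta Z_t|^2_{a_t}+C\,\mathbb E^P|\delta Y_t|^2 .
\]
The new term I will split by Young as \(2\delta Y_t\Delta f_t\le|\delta Y_t|^2+|\Delta f_t|^2\). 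Setting \(m(t)=\mathbb E^P|\delta Y_t|^2\), (H5bis) lets me drop the dissipative term, and I obtain
\[
m(t)+\tfrac12\,\mathbb E^P\!\int_t^T|\delta Z_s|^2_{a_s}ds\le \mathbb E^P|g^1-g^2|^2+C\!\int_t^T m(s)ds+\mathbb E^P\!\int_0^T|\Delta f_s|^2ds .
\]
The backward Gronwall lemma then bounds \(\sup_t m(t)\) and the \(Z\)-integral by \(C\) times the data term.

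\textbf{Step 2 (supremum in time).} This is where I expect the main care to be needed, because a supremum over \(t\) does not commute with expectation. I will return to the pathwise It\^o identity, take \(\sup_t\) before taking expectation, and bound the stochastic integral \(\sup_t|\int_t^T\delta Y_s\delta Z_s\,dB_s|\) with the Burkholder--Davis--Gundy inequality, which is uniform in \(P\) as used in Section~3. This gives a bound of the form \(C\,\mathbb E^P[(\sup_t|\delta Y_t|^2)^{1/2}(\int_0^T|\delta Z_s|^2_{a_s}ds)^{1/2}]\). I will then apply Young with a small coefficient so that \(\tfrac12\mathbb E^P\sup_t|\delta Y_t|^2\) can be absorbed on the left. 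The remaining drift terms are controlled pathwise by \(\int_0^T(|\delta Y_s|^2+|\delta Z_s|^2+m(s)+|\Delta f_s|^2)ds\), and each of these is already bounded by Step 1. The absorption is legitimate because \(\delta Y\in\mathcal S^2(P)\), so the quantity being absorbed is finite.

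\textbf{Step 3 (uniformity in \(P\)).} All constants depend only on \(T\), \(\gamma\), \(L_f\), \(L_\delta\), \(\rho\), the ellipticity bounds \(\underline a,\overline a\) and \(C_{\mathrm{BDG}}\), and none of them depends on \(P\). I will therefore take \(\sup_{P\in\mathcal P}\) on both sides, bounding the right-hand side for each \(P\) by its supremum. This yields the stated estimate.
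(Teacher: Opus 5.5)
Your proposal is correct and follows essentially the same route as the paper: It\^o's formula for $|\delta Y_t|^2$, the dissipative bound on $2\delta Y_t\,\delta f_t$ taken from the proof of Theorem~\ref{thm:robust-cauchy}, Young's inequality $2\delta Y_s\Delta f_s\le|\delta Y_s|^2+|\Delta f_s|^2$, the backward Gronwall lemma, then the Burkholder--Davis--Gundy inequality for the time supremum and a supremum over $P$. Your Step~2 writes out the BDG and Young absorption argument, and you state explicitly that the common minimal signal $\rho A^0_{\mathrm{ALA}}(X^P,\Upsilon(\mu^P))$ cancels in the difference; the paper leaves both points implicit.
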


\begin{proof}
Set $\delta Y = Y^1 - Y^2$, $\delta Z = Z^1 - Z^2$, $\delta g = g^1(X_T^P,\Upsilon(\mu_T^P)) - g^2(X_T^P,\Upsilon(\mu_T^P))$, and $m(t) = \mathbb E^P|\delta Y_t|^2$. Apply It\^o's formula to $|\delta Y_t|^2$:
\[
\begin{aligned}
|\delta Y_t|^2+\int_t^T|\delta Z_s|_{a_s}^2ds
&=|\delta g|^2
+2\int_t^T\delta Y_s\delta f_s\,ds
+2\int_t^T\delta Y_s\Delta f_s\,ds\\
&\quad
-2\int_t^T\delta Y_s\delta Z_s\,dB_s.
\end{aligned}
\]

The monotonicity estimates of $f$ (H5) and the Lipschitz continuity of collective observables (Lemma~\ref{lem:stability-observables}) give
\[
2\delta Y_s \delta f_s \le -(2\gamma - 2L_f^2 - L_\delta^2)|\delta Y_s|^2 + \frac12 |\delta Z_s|_{a_s}^2 + C m(s).
\]
Moreover, $2\delta Y_s \Delta f_s \le |\delta Y_s|^2 + |\Delta f_s|^2$. Taking expectation,
\[
\begin{aligned}
m(t)+\int_t^T\mathbb E^P|\delta Z_s|_{a_s}^2ds
&\le
\mathbb E^P|\delta g|^2
+C\int_t^T m(s)\,ds\\
&\quad
+\frac12\int_t^T\mathbb E^P|\delta Z_s|_{a_s}^2ds
+\int_t^T\mathbb E^P|\Delta f_s|^2ds\\
&\quad
+\int_t^T m(s)\,ds.
\end{aligned}
\]

Thus,
\[
m(t) + \frac12 \int_t^T \mathbb E^P|\delta Z_s|_{a_s}^2 ds \le \mathbb E^P|\delta g|^2 + C\int_t^T m(s) ds + \int_t^T \mathbb E^P|\Delta f_s|^2 ds.
\]

Gronwall's lemma then gives \(\sup_t m(t)+\int_0^T\mathbb E^P|\delta Z_s|_{a_s}^2ds \le C(\mathbb E^P|\delta g|^2+\int_0^T\mathbb E^P|\Delta f_s|^2ds)\). The BDG inequality provides the time supremum. Taking the supremum over \(P\), we conclude.
\end{proof}

\subsection{Stability with respect to collective observables}

\begin{proposition}[Stability with respect to collective observables]
\label{prop:stability-observables}

Assume that assumptions {\rm(H1)--(H5)} are satisfied. We further assume that the considered observables are Lipschitz and have at most linear growth.

Consider two families of collective observables
\[
\bigl(\Psi_\varphi,\Psi_\beta,\Psi_\gamma,\Psi_\delta,\Psi_\lambda\bigr)
\quad\text{and}\quad
\bigl(\widetilde\Psi_{\widetilde\varphi},
\widetilde\Psi_{\widetilde\beta},
\widetilde\Psi_{\widetilde\gamma},
\widetilde\Psi_{\widetilde\delta},
\widetilde\Psi_{\widetilde\lambda}\bigr),
\]
associated respectively with the functions
\[
(\varphi,\beta,\gamma,\delta,\lambda)
\quad\text{and}\quad
(\widetilde\varphi,\widetilde\beta,\widetilde\gamma,\widetilde\delta,\widetilde\lambda).
\]

Define the weighted deviations
\[
d_\varphi
:=
\sup_{x\in\mathbb R^d}
\frac{|\varphi(x)-\widetilde\varphi(x)|}{1+|x|},
\qquad
d_\beta
:=
\sup_{x\in\mathbb R^d}
\frac{|\beta(x)-\widetilde\beta(x)|}{1+|x|},
\]
\[
d_\gamma
:=
\sup_{x\in\mathbb R^d}
\frac{|\gamma(x)-\widetilde\gamma(x)|}{1+|x|},
\qquad
d_\lambda
:=
\sup_{x\in\mathbb R^d}
\frac{|\lambda(x)-\widetilde\lambda(x)|}{1+|x|},
\]
and
\[
d_\delta
:=
\sup_{y\in\mathbb R}
\frac{|\delta(y)-\widetilde\delta(y)|}{1+|y|}.
\]
Set
\[
d_{\rm obs}^2
:=
d_\varphi^2+d_\beta^2+d_\gamma^2+d_\delta^2+d_\lambda^2.
\]

For the same initial condition $\xi$, the same admissible control $u$, and for each $P\in\mathcal P$, let
\[
(X^P,Y^P,Z^P)
\quad\text{and}\quad
(\widetilde X^P,\widetilde Y^P,\widetilde Z^P)
\]
be the robust solutions associated with the two families of observables.

Then there exists a constant $C_T>0$, independent of $P\in\mathcal P$, such that
\[
\|X-\widetilde X\|_{\mathcal S_{\mathcal P}^2}^2
+
\|Y-\widetilde Y\|_{\mathcal S_{\mathcal P}^2}^2
+
\|Z-\widetilde Z\|_{\mathcal H_{\mathcal P}^2}^2
\le
C_T \, d_{\rm obs}^2.
\]

In particular, the map
\[
(\varphi,\beta,\gamma,\delta,\lambda)
\longmapsto
(X,Y,Z)
\]
is continuous from the space of admissible observables, equipped with the distance $d_{\rm obs}$, to the space of robust solutions
\[
\mathcal S_{\mathcal P}^2
\times
\mathcal S_{\mathcal P}^2
\times
\mathcal H_{\mathcal P}^2.
\]

\end{proposition}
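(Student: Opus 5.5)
The proof splits into a forward step and a backward step, with the forward estimate feeding into the backward one.

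\textbf{Step 1: forward estimate.} Fix $P\in\mathcal P$ and set $\Delta X=X^P-\widetilde X^P$. Since $\xi$ and $u$ are common, $\Delta X_0=0$. The drift difference splits into three pieces:
\[
b(s,X_s,\Psi_\varphi(\mu_s),u_s)-b(s,\widetilde X_s,\Psi_\varphi(\widetilde\mu_s),u_s)
\]
is bounded by $L\bigl(|\Delta X_s|+|\Psi_\varphi(\mu_s)-\Psi_\varphi(\widetilde\mu_s)|+|\Psi_\varphi(\widetilde\mu_s)-\widetilde\Psi_{\widetilde\varphi}(\widetilde\mu_s)|\bigr)$. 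The second piece is at most $L_\varphi W_2(\mu_s,\widetilde\mu_s)\le L_\varphi(\mathbb E^P|\Delta X_s|^2)^{1/2}$ by Lemma~\ref{lem:stability-observables} and the canonical coupling. The third piece is bounded directly from the definition of $d_\varphi$:
\[
|\Psi_\varphi(\nu)-\widetilde\Psi_{\widetilde\varphi}(\nu)|\le\int|\varphi-\widetilde\varphi|\,d\nu\le d_\varphi\Bigl(1+\bigl(\textstyle\int|x|^2d\nu\bigr)^{1/2}\Bigr).
\]
By Proposition~\ref{prop:fixed-point-P-uniform}, applied to the tilde system with its own Lipschitz observables, this is at most $C\,d_\varphi$ uniformly in $P$ and $s$. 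The diffusion is handled the same way with $\beta$. Then I repeat the BDG/Gronwall scheme of \eqref{eq:forward-stability-basic}--\eqref{eq:forward-stability-gronwall}, with $\Phi(t)=\sup_P\mathbb E^P[\sup_{r\le t}|\Delta X_r|^2]$. This gives $\|X-\widetilde X\|_{\mathcal S^2_{\mathcal P}}^2\le C_T(d_\varphi^2+d_\beta^2)$.

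\textbf{Step 2: backward estimate.} Both backward solutions are the canonical ones of Theorems~\ref{thm:existence-backward} and~\ref{thm:uniqueness-backward}, so their signals are $\Gamma=A^0_{\mathrm{ALA}}(X^P,\Upsilon(\mu^P))$ and $\widetilde\Gamma=A^0_{\mathrm{ALA}}(\widetilde X^P,\Upsilon(\widetilde\mu^P))$. I would not rely on Theorem~\ref{thm:stability-backward} alone, because it freezes the forward path; instead I redo its energy estimate. Apply Itô's formula to $|\delta Y_t|^2$ and split $\delta f$ as follows:
\begin{itemize}
\item the $(y,z)$ and $\Psi_\delta$-law differences are treated exactly as in Theorem~\ref{thm:robust-cauchy}, using (H5) and (H5bis);
\item the $x$ and $\Psi_\gamma$ differences are bounded by $C(|\Delta X_s|+(\mathbb E^P|\Delta X_s|^2)^{1/2}+d_\gamma)$, as in Step 1;
\item the $\Psi_\delta$ versus $\widetilde\Psi_{\widetilde\delta}$ mismatch is bounded by $C\,d_\delta(1+\|\widetilde Y\|_{\mathcal S^2_{\mathcal P}})\le Cd_\delta$, using Theorem~\ref{thm:existence-backward}.
\end{itemize}
For the terminal term, $|\delta g|\le C(|\Delta X_T|+W_2(\mu_T,\widetilde\mu_T)+d_\lambda)$. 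This shows the terminal value is controlled by $d_\lambda$, even though the paper writes $\Upsilon$ in the terminal condition of \eqref{eq:regularized}; I would use the observable $\Psi_\lambda$ there as in \eqref{eq:system_robuste}. The backward Gronwall argument and BDG then bound $\|\delta Y\|^2+\|\delta Z\|^2$ by $C(d_{\rm obs}^2+\sup_P\mathbb E^P\int_0^T|\rho(\Gamma_s-\widetilde\Gamma_s)|^2ds)$.

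\textbf{Main obstacle: the signal difference.} The remaining term is $\Gamma-\widetilde\Gamma$, and the minimal selection $A^0_{\mathrm{ALA}}$ is \emph{not} Lipschitz: componentwise it jumps between $\kappa_+$, $0$ and $-\kappa_-$ across the threshold $x_i=r$. Since $\Upsilon$ is unchanged, the only source of the difference is $\Delta X$. Lipschitz control in $\Delta X$ alone therefore fails. I would try three routes, in this order.
\begin{itemize}
\item \emph{Show the signal difference vanishes.} If the ALA reference is read through the family of observables, then with $\chi$ fixed the relevant case is $\Gamma-\widetilde\Gamma=\varepsilon\Delta X+(\text{jump part})$. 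The term $\varepsilon\Delta X$ is fine.
\item \emph{Bound the jump part by a contact estimate.} Its squared integral is at most $C\,\mathbb E^P\int_0^T\mathbf 1_{\{\text{sign}(X_i-R)\neq\text{sign}(\widetilde X_i-R)\}}ds$. Controlling this requires an estimate on the occupation time of the strip $\{|X_i-R|\le|\Delta X_i|\}$. Such an estimate follows from the uniform ellipticity $a\ge\underline a$ and an occupation-time (local time) bound, and gives a rate $d_{\rm obs}^{1/2}$ rather than $d_{\rm obs}$.
\item \emph{Fall back to the Yosida level.} Prove the estimate for $A^\lambda_{\mathrm{ALA}}$, which is $1/\lambda$-Lipschitz, and pass $\lambda\to0$. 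I expect this to give only continuity, not the full Lipschitz bound.
\end{itemize}
I expect this step to be the genuine difficulty. The stated rate $C_T d_{\rm obs}^2$ is safe for $X$. For $(Y,Z)$ it would need either a non-contact assumption or an acceptance of a Hölder rate; the continuity conclusion survives either way.
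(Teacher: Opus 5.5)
Your Step~1 and your handling of the non-signal terms in Step~2 are sound. They are more careful than the paper's proof, which simply cites Theorem~\ref{thm:stability-robust} (stated for perturbations of $(\xi,u)$, not of the observables) and Theorem~\ref{thm:stability-backward}. You have also found exactly what that proof skips. Theorem~\ref{thm:stability-backward} compares two backward equations along one and the same forward path: its $\delta g$ is evaluated at the common $X_T^P$, and the ALA signal is identical in both equations, so it cancels. Invoking it here therefore silently discards $\rho(\Gamma-\widetilde\Gamma)$, where $\Gamma=A^0_{\mathrm{ALA}}(X^P,\Upsilon(\mu^P))$ and $\widetilde\Gamma=A^0_{\mathrm{ALA}}(\widetilde X^P,\Upsilon(\widetilde\mu^P))$. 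The gap in your proposal is that none of your three routes controls this term. Under (H1)--(H5) it cannot be controlled at all: the $(Y,Z)$ part of the statement is false, and so is your fallback claim that ``the continuity conclusion survives either way''.

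\textbf{Counterexample.} Take $d=1$, $\xi=0$, $u\equiv0$, $\sigma\equiv0$, $b(t,x,m_\varphi,u)=m_\varphi$, $\chi\equiv0$ (so $\Upsilon\equiv0$), $g\equiv0$, and $f=-c\,y$ with $0<c<1$, so that (H5) and (H5bis) hold. Set all observables $\equiv0$ except $\widetilde\varphi\equiv h>0$; then $d_{\rm obs}=h$. Under every $P$ one has $X\equiv0$ and $\widetilde X_t=ht$. Hence $\Gamma\equiv A^0_{\mathrm{ALA}}(0,0)=0$, the minimal element of $[-\kappa_-,\kappa_+]$, whereas $\widetilde\Gamma_t=\kappa_++\varepsilon ht$ for $t>0$. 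Since $Z=\widetilde Z=0$ and $Y\equiv0$,
\[
\widetilde Y_0-Y_0=\rho\int_0^T e^{-cs}\bigl(\kappa_++\varepsilon hs\bigr)\,ds\ \ge\ \rho\kappa_+\,\frac{1-e^{-cT}}{c}\qquad\text{for every }h>0.
\]

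Each of your routes breaks on this example. The first only rewrites $\Gamma-\widetilde\Gamma$ and does not show that anything vanishes. The contact/occupation-time estimate needs nondegeneracy of the forward diffusion $\sigma a^{1/2}$. Uniform ellipticity of $a$ does not provide this, because (H2) allows $\sigma$ to be degenerate or zero. The Yosida route fails because $A^\lambda_{\mathrm{ALA}}(ht,0)=ht/\lambda$ whenever $ht\le\lambda\kappa_+$. So $A^\lambda_{\mathrm{ALA}}\to A^0_{\mathrm{ALA}}$ is not uniform over the perturbed family, and the limits $\lambda\downarrow0$ and $h\downarrow0$ do not commute.

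What is provable under the stated hypotheses is the estimate for $X$, and for $(Y,Z)$ the Lipschitz bound at the Yosida level with a constant $C_\lambda$. Any estimate for the canonical $(Y,Z)$ requires an additional nondegeneracy or non-contact hypothesis of the kind in Remark~\ref{rem:nonuniform-backward-chaos}. Even then, the resulting rate has to be proved rather than assumed.
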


\begin{proof}

The proof relies on the forward and backward stability theorems established previously.

For each $P\in\mathcal P$, let $(X^P,Y^P,Z^P)$ and $(\widetilde X^P,\widetilde Y^P,\widetilde Z^P)$ be the robust solutions associated respectively with the families of observables. Set $\Delta X_t = X_t^P - \widetilde X_t^P$, $\Delta Y_t = Y_t^P - \widetilde Y_t^P$, $\Delta Z_t = Z_t^P - \widetilde Z_t^P$.

\medskip
\noindent
\textbf{Control of observables.}

By definition, $\Psi_\varphi(\mu) = \int_{\mathbb R^d}\varphi(x)\,\mu(dx)$, $\widetilde\Psi_{\widetilde\varphi}(\mu) = \int_{\mathbb R^d}\widetilde\varphi(x)\,\mu(dx)$. Given the estimate $|\varphi(x)-\widetilde\varphi(x)| \le d_\varphi(1+|x|)$, we obtain
\[
\begin{aligned}
|\Psi_\varphi(\mu) - \widetilde\Psi_{\widetilde\varphi}(\mu)|
&\le \int |\varphi(x)-\widetilde\varphi(x)|\,\mu(dx) \\
&\le d_\varphi \int (1+|x|)\,\mu(dx).
\end{aligned}
\]
Since the forward solutions have uniformly bounded second moments, there exists a constant $C>0$ such that $|\Psi_\varphi(\mu) - \widetilde\Psi_{\widetilde\varphi}(\mu)| \le C\,d_\varphi$. The same reasoning leads to the inequalities $|\Psi_\beta-\widetilde\Psi_{\widetilde\beta}| \le C\,d_\beta,\quad |\Psi_\gamma-\widetilde\Psi_{\widetilde\gamma}| \le C\,d_\gamma,\quad |\Psi_\delta-\widetilde\Psi_{\widetilde\delta}| \le C\,d_\delta,\quad |\Psi_\lambda-\widetilde\Psi_{\widetilde\lambda}| \le C\,d_\lambda$. Thus, the coefficients of the two systems differ by a quantity controlled by $d_{\rm obs} := (d_\varphi^2+d_\beta^2+d_\gamma^2+d_\delta^2+d_\lambda^2)^{1/2}$.

\medskip
\noindent
\textbf{Stability of the forward component.}

The coefficients $b$ and $\sigma$ satisfy the uniform Lipschitz assumption. The robust stability theorem (Theorem~\ref{thm:stability-robust}) applied to the two systems gives directly $\|X-\widetilde X\|_{\mathcal S_{\mathcal P}^2}^2 \le C_T (d_\varphi^2+d_\beta^2)$. In particular, $\sup_{0\le t\le T}\sup_{P\in\mathcal P} W_2^2(\mathcal L^P(X_t^P), \mathcal L^P(\widetilde X_t^P)) \le C_T (d_\varphi^2+d_\beta^2)$.

\medskip
\noindent
\textbf{Stability of the terminal condition.}

Since the terminal function $g$ is Lipschitz,
\[
\begin{aligned}
\Big| g\bigl(X_T^P,\Psi_\lambda(\mu_T^P)\bigr)
- g\bigl( \widetilde X_T^P, \widetilde\Psi_{\widetilde\lambda}(\widetilde\mu_T^P) \bigr) \Big|
\le C \Big( |X_T^P-\widetilde X_T^P| + d_\lambda \Big).
\end{aligned}
\]
Taking expectation and using the forward estimate above, $\mathbb E^P[|\Delta\xi_T|^2] \le C_T (d_\varphi^2 + d_\beta^2 + d_\lambda^2)$.

\medskip
\noindent
\textbf{Stability of the backward component.}

The backward generator $f$ is Lipschitz in all its variables. The differences of the collective observables appearing in $f$ are controlled by $d_\gamma$ and $d_\delta$. The backward stability theorem (Theorem~\ref{thm:stability-backward}) then gives
\[
\|Y-\widetilde Y\|_{\mathcal S_{\mathcal P}^2}^2 + \|Z-\widetilde Z\|_{\mathcal H_{\mathcal P}^2}^2
\le C_T \Big( \mathbb E^P[|\Delta\xi_T|^2] + d_\gamma^2 + d_\delta^2 \Big).
\]
Injecting the estimate obtained for the terminal condition, we obtain $\|Y-\widetilde Y\|_{\mathcal S_{\mathcal P}^2}^2 + \|Z-\widetilde Z\|_{\mathcal H_{\mathcal P}^2}^2 \le C_T \, d_{\rm obs}^2$.

\medskip
\noindent
\textbf{Conclusion.}

Gathering the forward and backward estimates, we have $\|X-\widetilde X\|_{\mathcal S_{\mathcal P}^2}^2 + \|Y-\widetilde Y\|_{\mathcal S_{\mathcal P}^2}^2 + \|Z-\widetilde Z\|_{\mathcal H_{\mathcal P}^2}^2 \le C_T \, d_{\rm obs}^2$. Consequently, the map $(\varphi,\beta,\gamma,\delta,\lambda) \longmapsto (X,Y,Z)$ is locally Lipschitz on the space of admissible observables. The proof is complete.

\end{proof}

\begin{remark}[Interpretation of the stability of collective observables]
\label{rem:stability-observables-interpretation}

The preceding proposition shows that collective observables do not play only a modeling role. They intervene in a stable manner in the dynamics of the system: small perturbations of the collective indicators induce only small variations of the corresponding robust solutions. This property is important for several reasons.

\medskip

\noindent
\textbf{Justification of approximations.}
In practice, collective observables may be estimated from data or numerically approximated. The proved stability guarantees that estimation or approximation errors of the observables do not propagate catastrophically into the solution of the system.

\medskip

\noindent
\textbf{Economic relevance.}
The model remains relevant even if the collective indicators are known only approximately. Decisions or equilibria computed from approximated observables remain close to those obtained with exact observables.

\medskip

\noindent
\textbf{Link with particle approximations.}
The stability of collective observables also intervenes in the control of the error between the particle system and the limiting system, when empirical observables replace theoretical observables.

\medskip

\noindent
\textbf{Generality.}
The result does not depend on the specific form of the observables, but only on their Lipschitz stability and controlled growth. This formulation allows treating simultaneously a broad class of collective interaction models.

\end{remark}

\subsection{Robust variational well-posedness}

\begin{corollary}[Robust variational well-posedness]
\label{cor:wellposed-backward}

Under assumptions {\rm(H1)--(H5)} and {\rm(H5bis)}, the selected robust backward variational system associated with the asymmetric loss aversion functional admits a well-defined variational solution.

More precisely:

\begin{enumerate}
\item[(i)]
There exists at least one triplet
\[
(Y^P,Z^P,\Gamma^P) \in \mathcal S^2(P)\times \mathcal H^2(P)\times \mathcal H^2(P)
\]
such that
\[
\Gamma_t^P \in A_{\mathrm{ALA}}\bigl(X_t^P,\Upsilon(\mu_t^P)\bigr)
=\partial_x\Phi_{\mathrm{ALA}}\bigl(X_t^P,\mu_t^P\bigr)
\qquad \text{a.e.},
\]
and
\[
-dY_t^P = \Bigl(f\bigl(t,X_t^P,\Psi_\gamma(\mu_t^P), Y_t^P, Z_t^P, \Psi_\delta(\mu_t^{P,Y}), u_t\bigr) + \rho \Gamma_t^P\Bigr)dt - Z_t^P dB_t.
\]

\item[(ii)]
Every variational solution is obtained as the limit of a family of Yosida approximations.

\item[(iii)]
This solution is independent of the chosen regularization procedure.

\item[(iv)]
In the canonical representation associated with the minimal selection $A_{\mathrm{ALA}}^0$, the backward pair $(Y^P,Z^P)$ is unique.

\item[(v)]
The solution depends continuously on the terminal data, the generator and the collective observables in the sense of Theorem~\ref{thm:stability-backward} and Corollary~\ref{cor:stability-observables}.
\end{enumerate}

\end{corollary}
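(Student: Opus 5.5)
The plan is to assemble the corollary from the results already proved in Section 4, item by item, with the main care going into items (ii) and (iii), whose meaning must be fixed first. Item (i) is Theorem~\ref{thm:existence-backward}: for fixed $P$, solve the Yosida-regularized systems \eqref{eq:regularized}, use Theorem~\ref{thm:uniform-estimates} and Theorem~\ref{thm:robust-cauchy} to get $(Y^\lambda,Z^\lambda)\to(Y^P,Z^P)$ in $\mathcal S^2(P)\times\mathcal H^2(P)$, and extract a weak limit $\Gamma^P$ of $A^\lambda_{\mathrm{ALA}}(X^P,\Upsilon(\mu^P))$. Proposition~\ref{prop:robust-graph-closure} then gives $\Gamma^P_t\in A_{\mathrm{ALA}}(X^P_t,\Upsilon(\mu^P_t))$, and passing to the limit in the integral form of the equation yields the stated dynamics.

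For items (ii)--(iii) I will sharpen the weak limit. By Lemma~\ref{lem:yosida-strong-convergence}, $A^\lambda_{\mathrm{ALA}}(X^P,\Upsilon(\mu^P))\to A^0_{\mathrm{ALA}}(X^P,\Upsilon(\mu^P))$ strongly in $\mathcal H^2_{\mathcal P}$. Hence the whole family converges, not just a subsequence, and the limit signal is exactly $\Gamma^{0,P}=A^0_{\mathrm{ALA}}(X^P,\Upsilon(\mu^P))$. The limit pair $(Y^P,Z^P)$ therefore solves the canonical system of Theorem~\ref{thm:uniqueness-backward}, and by that theorem it coincides with its unique solution. This proves (iv), and it also fixes the reading of (ii) and (iii): the variational solution meant there is the canonical one, and it is the common limit of every family $\lambda\to0$. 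It is therefore independent of the chosen sequence $\lambda_n\downarrow0$, and of the resolvent-based approximation as long as that approximation converges to the minimal section.

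The main obstacle is exactly this interpretation of (ii) and (iii). For an arbitrary measurable selection $\Gamma$ of $A_{\mathrm{ALA}}$ at the threshold $\{X^P_t=\Upsilon(\mu^P_t)\mathbf 1\}$, Yosida limits cannot produce it, because they always select $A^0_{\mathrm{ALA}}$. I will therefore state explicitly that (ii) and (iii) refer to solutions obtained through the Yosida procedure, i.e.\ to the canonical representation. Should a more general statement be wanted, the fallback is as follows. The difference of two selected solutions solves a backward equation driven by $\rho(\Gamma-\Gamma')$, which is supported on the contact set. Theorem~\ref{thm:stability-backward}, with $\Delta f=\rho(\Gamma-\Gamma')$, bounds that difference by $\mathbb E^P\int_0^T|\Gamma-\Gamma'|^2\mathbf 1_{\{X^P_t=\Upsilon(\mu^P_t)\mathbf 1\}}dt$. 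This quantity vanishes under a non-contact condition, and only then would full independence hold.

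Item (v) follows directly from Theorem~\ref{thm:stability-backward} for the data $(g,f)$, with the canonical signal cancelling in the difference, and from Proposition~\ref{prop:stability-observables} together with Corollary~\ref{cor:stability-observables} for the observables. For the latter, the change in $\Upsilon(\mu^P)$ and the forward state feeds the term $A^0_{\mathrm{ALA}}$. Since $A^0_{\mathrm{ALA}}$ is not continuous across the threshold, I will state the continuity either at fixed forward data, or pass through the $\lambda$-Lipschitz regularized signals and then let $\lambda\to0$. Finally, uniformity in $P$ comes from the $P$-independent constants in all the invoked estimates.
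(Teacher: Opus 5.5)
Your handling of (i) and (iv) matches the paper's, which simply cites Theorem~\ref{thm:existence-backward} and Theorem~\ref{thm:uniqueness-backward}. For (ii)--(iii) you take a sharper route.

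\textbf{(ii)--(iii).} The paper argues from the Cauchy property and the Minty--Brezis closure of Proposition~\ref{prop:robust-graph-closure} that ``any Yosida regularization procedure leads to the same limiting equation''. Graph closure by itself only places a weak subsequential limit somewhere in $A_{\mathrm{ALA}}(X^P_t,\Upsilon(\mu^P_t))$. What actually pins the signal down is the strong convergence of Lemma~\ref{lem:yosida-strong-convergence}, which the paper uses only inside the Cauchy proof. Making it explicit, as you do, has several consequences:
\begin{enumerate}
\item[(a)] At fixed $P$, which is all you need, it reduces to pointwise convergence $A^\lambda_{\mathrm{ALA}}\to A^0_{\mathrm{ALA}}$ on $D(A_{\mathrm{ALA}}(\cdot,r))=\R^d$, the bound $|A^\lambda_{\mathrm{ALA}}(x,r)|\le C(1+|x|)$, and dominated convergence.
\item[(b)] It identifies the limit signal as the minimal section and gives convergence of the whole family, so the Minty step becomes redundant.
\item[(c)] It shows that the constructed solution is exactly the canonical one of Theorem~\ref{thm:uniqueness-backward}, a link the paper never states.
\end{enumerate}
The paper's argument also only shows that Yosida limits are variational solutions, whereas (ii) literally asserts the converse. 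Your restriction of (ii) to the canonical representation is therefore necessary, not merely cautious. Take $d=1$, $b=\sigma=0$, $\xi=0$, $\chi\equiv0$, $g=0$, $f=-y/2$. Then $X^P\equiv0$ sits on the threshold, every Yosida signal vanishes there, and the canonical solution is $Y^P\equiv0$. Yet $\Gamma\equiv\kappa_+$ is an admissible selection giving $Y^P_t=2\rho\kappa_+(1-e^{-(T-t)/2})\neq0$, which no Yosida family reaches.

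\textbf{Two corrections.}
\begin{enumerate}
\item[(1)] In your fallback the contact set should be $\bigcup_{i}\{X^{P,i}_t=\Upsilon(\mu^P_t)\}$. The function $\vartheta$ is non-differentiable as soon as a single coordinate meets the reference, not only on the line $\{x=\Upsilon(\mu)\mathbf 1\}$; the paper makes the same slip.
\item[(2)] For (v), keep only your first option: fixed forward data, where the canonical signal cancels and Theorem~\ref{thm:stability-backward} applies. The second option, passing through the $\lambda$-regularized signals and letting $\lambda\to0$, cannot work. The $\lambda$-level constant is of order $1/\lambda$, and the Yosida error is not uniform in the data near the threshold. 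In fact continuity genuinely fails once the perturbation moves the forward state. In the example above, set $b=m_\varphi$ and replace $\varphi\equiv0$ by $\tilde\varphi\equiv h$. Then $d_{\mathrm{obs}}=h$ but $\tilde X_t=ht$, so the canonical signal jumps from $0$ to $\kappa_++\varepsilon ht$ on $(0,T]$. Consequently $\tilde Y^P_0-Y^P_0\ge 2\rho\kappa_+(1-e^{-T/2})$ for every $h>0$. The same happens if one replaces $\xi=0$ by $\xi=h$, which is the sense of Corollary~\ref{cor:stability-observables}.
\end{enumerate}
So Proposition~\ref{prop:stability-observables}, whose proof ignores the signal term, overstates what holds, and your restricted reading of (v) is the right one.
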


\begin{proof}

The existence of a variational solution follows directly from Theorem~\ref{thm:existence-backward}, which provides a triplet $(Y^P,Z^P,\Gamma^P)$ satisfying the selected robust backward variational equation with
\[
\Gamma_t^P \in A_{\mathrm{ALA}}\bigl(X_t^P,\Upsilon(\mu_t^P)\bigr).
\]
Theorem~\ref{thm:robust-cauchy} shows that the family of regularized solutions $(Y^\lambda,Z^\lambda)$ is Cauchy in $\mathcal S^2(P)\times \mathcal H^2(P)$. Consequently, it converges to a unique limit $(Y^P,Z^P)$. Proposition~\ref{prop:robust-graph-closure} then allows rigorously identifying the weak limit of the regularized monotone term and guarantees that this limit still belongs to the graph of the operator $A_{\mathrm{ALA}}$. Thus, any Yosida regularization procedure leads to the same limiting equation. The variational solution obtained is therefore independent of the regularization used. Uniqueness of the backward pair associated with the minimal selection $A_{\mathrm{ALA}}^0$ follows from Theorem~\ref{thm:uniqueness-backward}. Finally, the Lipschitz stability results established in Theorem~\ref{thm:stability-backward} and Corollary~\ref{cor:stability-observables} show that the solution depends continuously on the data of the problem and the collective observables.

The five preceding assertions show that the selected robust backward variational system is well-posed.

\end{proof}

\subsection{Economic interpretation of the monotone signal}

\begin{remark}[Economic interpretation]
\label{rem:economic-interpretation}

The monotone signal $\Gamma_t^P \in A_{\mathrm{ALA}}(X_t^P,\Upsilon(\mu_t^P))$ acts as a restoring force depending on the relative position of the agent with respect to the collective benchmark $\Upsilon(\mu_t^P)$. When $X_t^P > \Upsilon(\mu_t^P)$, the agent is above the collective benchmark and undergoes a moderate penalty proportional to $\kappa_+$. Conversely, when $X_t^P < \Upsilon(\mu_t^P)$, the penalty becomes stronger since $\kappa_- > \kappa_+$. The monotone signal thus acts as an asymmetric correction mechanism for deviations from the collective reference. The model thus generalizes classical dynamics based solely on the population mean, by introducing an asymmetric response to deviations from a collective reference.

\end{remark}

\subsection{Role of the monotone domination assumption}

\begin{remark}[Role of the monotone domination assumption]
\label{rem:H6-role}

Assumption \ref{H6} is not necessary for the construction of the backward variational solution obtained by Yosida regularization and passage to the limit. Its role appears when studying the full coupled forward--backward variational system. Indeed, the strong monotonicity of the monotone signal $A_{\mathrm{ALA}}$ and the monotonicity of the generator $f$ provide a global dissipation that compensates for the potentially destabilizing effects of the Lipschitz terms of the system. This condition is the robust counterpart of the Peng--Wu monotone condition for coupled FBSDE and becomes essential in the forward--backward energy estimates as well as in the future study of the robust Pontryagin maximum principle. In the present section, where the forward dynamics is considered as already constructed, assumption \ref{H6} intervenes only indirectly and will be fully exploited in the analysis of the complete coupled system.
\end{remark}

\section{Robust propagation of chaos}

We study the particle approximation of the system. The analysis is carried out at the level of the Yosida-regularized variational system, since the Lipschitz properties of $A_{\mathrm{ALA}}^\lambda$ obtained in Section 4 allow quantitative control of the error in $N$. The simultaneous passage to the limits $N\to\infty$ and $\lambda\to0$ remains a delicate question, discussed at the end of the section.

Throughout this section, the estimates are first obtained under a fixed admissible probability \(P=P^a\). Equivalently, the driving noise may be represented as \(dB_t=a_t^{1/2}dW_t^a\). In the particle notation below we use independent Brownian motions \(W^{i,a}\) and write the diffusion terms with the factor \(a_t^{1/2}\). The constants are chosen uniformly over the admissible volatility processes \(a\), and the final estimates are then aggregated over \(P\in\mathcal P\). This convention is consistent with the robust formulation \(d\langle B\rangle_t=a_tdt\).

\subsection{Yosida-regularized particle system}

For $N\ge1$ and $\lambda>0$, we consider the system of $N$ Yosida-regularized particles:
\begin{equation}\label{eq:particle_system}
\begin{aligned}
dX_t^{i,N}
&= b\bigl(t,X_t^{i,N},\Psi_\varphi(\mu_t^{N,P}),u_t\bigr)dt\\
&\quad
+ \sigma\bigl(t,X_t^{i,N},\Psi_\beta(\mu_t^{N,P}),u_t\bigr)
a_t^{1/2}dW_t^{i,a},\\[3pt]
-dY_t^{i,N,\lambda}
&= \Bigl[
f\bigl(t,X_t^{i,N},\Psi_\gamma(\mu_t^{N,P}),
Y_t^{i,N,\lambda},Z_t^{i,N,\lambda},
\Psi_\delta(\mu_t^{N,Y,\lambda}),u_t\bigr)\\
&\qquad
+\rho A_{\mathrm{ALA}}^\lambda\bigl(X_t^{i,N},\Upsilon(\mu_t^{N,P})\bigr)
\Bigr]dt
-Z_t^{i,N,\lambda}a_t^{1/2}dW_t^{i,a},\\[3pt]
X_0^{i,N}&=\xi^i,\qquad
Y_T^{i,N,\lambda}=g\bigl(X_T^{i,N},\Upsilon(\mu_T^{N,P})\bigr),\\[3pt]
\mu_t^{N,P}&:=\frac1N\sum_{j=1}^N\delta_{X_t^{j,N}},
\qquad
\mu_t^{N,Y,\lambda}:=\frac1N\sum_{j=1}^N\delta_{Y_t^{j,N,\lambda}}.
\end{aligned}
\end{equation}
For each fixed \(P=P^a\), the processes \(W^{i,a}\) are independent Brownian motions and the \(\xi^i\) are i.i.d. with law \(\mathcal L^P(\xi)\). The control \(u\) is identical for all particles. Here, \(\Psi_\varphi\), \(\Psi_\beta\), \(\Psi_\gamma\), \(\Psi_\delta\) are the collective observables defined in Section 2, and \(\Upsilon(\mu)=\int\chi d\mu\) is the reference observable for the ALA functional.

The decoupled limiting system is
\begin{equation}\label{eq:limit_system}
\begin{aligned}
dX_t^i
&= b\bigl(t,X_t^i,\Psi_\varphi(\mu_t^P),u_t\bigr)dt
+ \sigma\bigl(t,X_t^i,\Psi_\beta(\mu_t^P),u_t\bigr)
a_t^{1/2}dW_t^{i,a},\\[3pt]
-dY_t^{i,\lambda}
&= \Bigl[
f\bigl(t,X_t^i,\Psi_\gamma(\mu_t^P),
Y_t^{i,\lambda},Z_t^{i,\lambda},
\Psi_\delta(\mu_t^{P,Y^\lambda}),u_t\bigr)\\
&\qquad
+\rho A_{\mathrm{ALA}}^\lambda\bigl(X_t^i,\Upsilon(\mu_t^P)\bigr)
\Bigr]dt
- Z_t^{i,\lambda}a_t^{1/2}dW_t^{i,a},\\[3pt]
X_0^i&=\xi^i,\qquad
Y_T^{i,\lambda}=g\bigl(X_T^i,\Upsilon(\mu_T^P)\bigr),\\[3pt]
\mu_t^P&:=\mathcal L^P(X_t^i),\qquad
\mu_t^{P,Y^\lambda}:=\mathcal L^P(Y_t^{i,\lambda}).
\end{aligned}
\end{equation}
The triplets $(X^i,Y^{i,\lambda},Z^{i,\lambda})$ are independent and identically distributed.

Having established the well-posedness and stability of the limiting system, we now turn to its approximation by a finite particle system. The objective is to obtain quantitative propagation of chaos estimates uniform with respect to the admissible probability.

Quantitative propagation of chaos estimates require a uniform control of higher-order moments. We first establish a uniform fourth moment bound for the particle system, then for the limiting system.

\subsection{Fourth moment estimate}

\begin{lemma}[Uniform fourth moment estimate for the particle system]
\label{lem:fourth-moment}

Under assumptions {\rm(H1)--(H2)}, {\rm(H1bis)} and ${\rm(H_u^4)}$, there exists a constant $C>0$, independent of $N$ and $P\in\mathcal P$, such that
\[
\sup_{P\in\mathcal P}
\sup_{1\le i\le N}
\mathbb E^P
\Bigl[
\sup_{0\le t\le T}
|X_t^{i,N}|^4
\Bigr]
\le C.
\]
\end{lemma}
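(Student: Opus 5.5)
We fix $P=P^a\in\mathcal P$ and a particle index $i$. The strategy is to prove a fourth-moment Gronwall inequality for the averaged quantity
\[
\Phi_N(t):=\frac1N\sum_{j=1}^N\mathbb E^P\Bigl[\sup_{0\le s\le t}|X_s^{j,N}|^4\Bigr]
\]
and then return to each individual particle. We only use the forward part of \eqref{eq:particle_system}, which involves neither $\lambda$ nor the backward variables. The estimate therefore holds automatically for every $\lambda>0$. All constants will depend only on $T$, the Lipschitz and growth constants in (H2), the Lipschitz constants of $\varphi,\beta$, the bound $\overline a$, the dimension, $\sup_P\mathbb E^P|\xi|^4$ from (H1bis), and $\sup_P\mathbb E^P\int_0^T|u_t|^4dt$ from $(\mathrm H_u^4)$. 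None of these depend on $N$ or on $P$.

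\textbf{Step 1: individual estimate.}
From the integral form of the forward equation we get, using $(x+y+z)^4\le 27(x^4+y^4+z^4)$,
\[
\sup_{s\le t}|X_s^{i,N}|^4\le C\Bigl(|\xi^i|^4+\Bigl(\int_0^t|b_r|\,dr\Bigr)^4+\sup_{s\le t}\Bigl|\int_0^s\sigma_r a_r^{1/2}dW_r^{i,a}\Bigr|^4\Bigr).
\]
We bound each term as follows.
\begin{itemize}
\item The drift integral is handled by Hölder's inequality: $\bigl(\int_0^t|b_r|dr\bigr)^4\le T^3\int_0^t|b_r|^4dr$.
\item The stochastic integral is handled by the BDG inequality with exponent $4$. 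Since $a_r\le\overline a$, the resulting constant is uniform in $a$. This gives a bound by $C\,\mathbb E^P\bigl(\int_0^t|\sigma_r|^2dr\bigr)^2\le CT\,\mathbb E^P\int_0^t|\sigma_r|^4dr$.
\item The linear growth in (H2) gives
\[
|b_r|^4+|\sigma_r|^4\le C\bigl(1+|X_r^{i,N}|^4+|\Psi_\varphi(\mu_r^{N,P})|^4+|\Psi_\beta(\mu_r^{N,P})|^4+|u_r|^4\bigr).
\]
\end{itemize}

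\textbf{Step 2: the empirical observables (main obstacle).}
The difficulty is that the empirical observables couple particle $i$ to all the others. They are random, and no bound on them is available a priori. Since $\varphi$ is Lipschitz,
\[
|\Psi_\varphi(\mu_r^{N,P})|\le|\varphi(0)|+\frac{L_\varphi}{N}\sum_j|X_r^{j,N}|.
\]
Jensen's inequality for the empirical average then gives
\[
|\Psi_\varphi(\mu_r^{N,P})|^4\le C\Bigl(1+\frac1N\sum_j|X_r^{j,N}|^4\Bigr),
\]
and likewise for $\beta$.

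\textbf{Step 3: closing the Gronwall inequality.}
Taking expectations in Step 1 and inserting the bound of Step 2 gives
\[
\mathbb E^P\Bigl[\sup_{s\le t}|X_s^{i,N}|^4\Bigr]\le C\Bigl(1+\mathbb E^P|\xi|^4+\mathbb E^P\!\int_0^T|u_r|^4dr+\int_0^t\mathbb E^P\Bigl[\sup_{q\le r}|X_q^{i,N}|^4\Bigr]dr+\int_0^t\Phi_N(r)\,dr\Bigr).
\]
Averaging over $i$ yields
\[
\Phi_N(t)\le C_0+C\int_0^t\Phi_N(r)\,dr,
\]
with $C_0$ independent of $N$ and $P$. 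To apply Gronwall's lemma legitimately we need $\Phi_N(t)<\infty$. This holds for each fixed $N$, because the particle system is a Lipschitz SDE in $\mathbb R^{dN}$ with $L^4$ initial data; alternatively, one can localize with stopping times $\tau_n=\inf\{t:\max_j|X^{j,N}_t|\ge n\}$ and then let $n\to\infty$ by Fatou's lemma. Gronwall's lemma gives $\sup_N\Phi_N(T)\le C$.

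\textbf{Step 4: conclusion.}
We plug this bound back into the individual inequality of Step 3 and apply Gronwall's lemma once more. This bounds $\mathbb E^P[\sup_t|X_t^{i,N}|^4]$ uniformly in $i$, $N$ and $P$. In fact exchangeability already gives this directly, since the $\xi^i$ are i.i.d. and $u$ is common to all particles. Taking the supremum over $i$ and over $P\in\mathcal P$ finishes the proof.
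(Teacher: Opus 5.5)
Your proof is correct. Its skeleton is the same as the paper's: the $27$-splitting, H\"older for the drift, BDG with exponent $4$ and constants uniform over $a\le\overline a$, linear growth from (H2), and Gronwall.

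The genuine difference is how the empirical observables are handled. The paper first uses exchangeability to reduce to $M(t)=\mathbb E^P[\sup_{r\le t}|X^{1,N}_r|^4]$. It then bounds $|\Psi_\varphi(\mu^{N,P}_s)|$ by the deterministic quantity $|\varphi(0)|+L_\varphi\bigl(\frac1N\sum_j\mathbb E^P|X^{j,N}_s|^2\bigr)^{1/2}$, that is, it treats the empirical observable as if it were its expectation. As a pointwise inequality this is not valid, since $\Psi_\varphi(\mu^{N,P}_s)=\frac1N\sum_j\varphi(X^{j,N}_s)$ is random. Your pathwise bound $|\Psi_\varphi(\mu^{N,P}_r)|^4\le C\bigl(1+\frac1N\sum_j|X^{j,N}_r|^4\bigr)$, with expectations taken only afterwards, yields exactly the linear inequality the paper needs. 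Your Step 2 is therefore the correct form of that step.

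Two further points favour your version. First, closing Gronwall on the average $\Phi_N$ and then rerunning it for each fixed $i$ removes any reliance on exchangeability. The paper invokes exchangeability without comment, and it requires the common control $u$ to be symmetric in the noises $W^{i,a}$. For the same reason, your parenthetical appeal to exchangeability in Step 4 is unnecessary. Second, your localization remark supplies the a priori finiteness of $\Phi_N$ that the paper's Gronwall step tacitly assumes.

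The paper's route is shorter once exchangeability is granted; yours is self-contained and holds under weaker structural assumptions.
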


\begin{proof}
Fix $P\in\mathcal P$. By exchangeability of the particles, all quantities $\mathbb E^P[\sup_{0\le r\le t}|X_r^{j,N}|^4]$ coincide. Set $M(t):=\mathbb E^P[\sup_{0\le r\le t}|X_r^{1,N}|^4]$. The forward component $X_t^{1,N}$ satisfies the equation
\[
X_t^{1,N} = \xi^1 + \int_0^t b(s,X_s^{1,N}, \Psi_\varphi(\mu_s^{N,P}), u_s)ds + \int_0^t \sigma(s,X_s^{1,N}, \Psi_\beta(\mu_s^{N,P}), u_s)a_s^{1/2}dW_s^{1,a}.
\]

By the elementary inequality $(a+b+c)^4 \le 27(a^4+b^4+c^4)$, we have
\[
\sup_{0\le r\le t}|X_r^{1,N}|^4
\le 27|\xi^1|^4 + 27\sup_{0\le r\le t}\Bigl|\int_0^r b_s ds\Bigr|^4 + 27\sup_{0\le r\le t}\Bigl|\int_0^r \sigma_s a_s^{1/2} dW_s^{1,a}\Bigr|^4.
\]

Take expectation under $P$.

\medskip
\noindent\textbf{Control of the drift term.}
By H\"older's inequality,
\[
\sup_{0\le r\le t}\Bigl|\int_0^r b_s ds\Bigr|^4 \le t^3\int_0^t |b_s|^4 ds,
\]
where $b_s = b(s,X_s^{1,N}, \Psi_\varphi(\mu_s^{N,P}), u_s)$. Thus,
\[
\mathbb E^P\Bigl[\sup_{0\le r\le t}\Bigl|\int_0^r b_s ds\Bigr|^4\Bigr] \le T^3\int_0^t \mathbb E^P[|b_s|^4]\,ds.
\]

\medskip
\noindent\textbf{Control of the stochastic term.}
The Burkholder--Davis--Gundy inequality gives
\[
\mathbb E^P\Bigl[\sup_{0\le r\le t}\Bigl|\int_0^r \sigma_s a_s^{1/2}dW_s^{1,a}\Bigr|^4\Bigr]
\le C_{\mathrm{BDG}}\,\mathbb E^P\Bigl[\Bigl(\int_0^t \operatorname{Tr}(\sigma_s a_s\sigma_s^\top) ds\Bigr)^2\Bigr],
\]
where $\sigma_s = \sigma(s,X_s^{1,N}, \Psi_\beta(\mu_s^{N,P}), u_s)$. Since \(a_s\le \overline a\), there exists a constant \(C_a>0\), depending only on the admissible volatility set, such that Cauchy--Schwarz gives
\[
\Bigl(\int_0^t \operatorname{Tr}(\sigma_s a_s\sigma_s^\top) ds\Bigr)^2
\le C_a^2 t\int_0^t \|\sigma_s\|^4 ds.
\]
Consequently,
\[
\mathbb E^P\Bigl[\sup_{0\le r\le t}\Bigl|\int_0^r \sigma_s a_s^{1/2}dW_s^{1,a}\Bigr|^4\Bigr]
\le C\int_0^t \mathbb E^P[\|\sigma_s\|^4]\,ds.
\]

\medskip
\noindent\textbf{Lyapunov function.}
From the estimates above,
\[
M(t) \le C\Bigl(\mathbb E^P[|\xi^1|^4] + \int_0^t \mathbb E^P[|b_s|^4 + \|\sigma_s\|^4]\,ds\Bigr).
\]

\medskip
\noindent\textbf{Growth of coefficients and control of collective observables.}
By (H2), there exists $L>0$ such that
\[
|b_s| + \|\sigma_s\| \le L\bigl(1 + |X_s^{1,N}| + |\Psi_\varphi(\mu_s^{N,P})| + |\Psi_\beta(\mu_s^{N,P})| + |u_s|\bigr).
\]

By exchangeability of the particles, the variables $X^{1,N},\ldots,X^{N,N}$ have the same law under each admissible probability $P$. Consequently, $\mathbb E^P[|X_s^{j,N}|^2] = \mathbb E^P[|X_s^{1,N}|^2]$ for all $j=1,\ldots,N$. Using Jensen's inequality,
\[
\left(\frac1N\sum_{j=1}^{N}\mathbb E^P|X_s^{j,N}|^2\right)^2 \le \frac1N\sum_{j=1}^{N}\mathbb E^P|X_s^{j,N}|^4.
\]
Thus, $\frac1N\sum_{j=1}^{N}\mathbb E^P|X_s^{j,N}|^2 \le M(s)^{1/2}$, which allows obtaining uniform estimates on the collective observables. By linear growth of $\varphi$ and $\beta$, we have
\[
|\Psi_\varphi(\mu_s^{N,P})| \le |\varphi(0)| + L_\varphi \Bigl(\frac1N\sum_{j=1}^N \mathbb E^P[|X_s^{j,N}|^2]\Bigr)^{1/2}.
\]
Moreover, $\mathbb E^P[|X_s^{j,N}|^2] \le (\mathbb E^P[|X_s^{j,N}|^4])^{1/2} \le (\mathbb E^P[\sup_{0\le r\le s}|X_r^{j,N}|^4])^{1/2} = M(s)^{1/2}$. Thus,
\[
|\Psi_\varphi(\mu_s^{N,P})| \le C(1 + M(s)^{1/2}),
\]
and similarly for $\Psi_\beta$. Consequently,
\[
|b_s|^4 + \|\sigma_s\|^4 \le C(1 + |X_s^{1,N}|^4 + M(s) + |u_s|^4).
\]

\medskip
\noindent\textbf{Gronwall inequality.}
Substituting into the estimate for $M(t)$,
\[
M(t) \le C\Bigl(\mathbb E^P[|\xi^1|^4] + 1 + \int_0^t M(s)\,ds + \int_0^T \mathbb E^P[|u_s|^4]\,ds\Bigr).
\]
Under (H1bis) and $(H_u^4)$, the constant terms are uniformly bounded with respect to $P\in\mathcal P$. We therefore obtain
\[
M(t) \le C + C\int_0^t M(s)\,ds.
\]
Gronwall's lemma gives $M(t) \le C e^{Ct}$ for every $t\le T$, hence
\[
\sup_{P\in\mathcal P}\sup_{1\le i\le N}\mathbb E^P\Bigl[\sup_{0\le t\le T}|X_t^{i,N}|^4\Bigr] \le C,
\]
with $C$ independent of $N$ and $P$.

The proof is complete.
\end{proof}

\begin{lemma}[Fourth moment of the limiting system]
\label{lem:fourth-moment-limit}

Under assumptions {\rm(H1)--(H2)}, {\rm(H1bis)} and ${\rm(H_u^4)}$, there exists a constant $C>0$, independent of $P\in\mathcal P$, such that
\[
\sup_{P\in\mathcal P}
\mathbb E^P
\Big[
\sup_{0\le t\le T}
|X_t^1|^4
\Big]
\le C.
\]
\end{lemma}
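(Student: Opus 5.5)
The plan is to repeat the argument of Lemma~\ref{lem:fourth-moment}, replacing the empirical observables by the deterministic observables $\Psi_\varphi(\mu_t^P)$ and $\Psi_\beta(\mu_t^P)$ of the limiting system \eqref{eq:limit_system}. By Theorem~\ref{thm:fixed-point-global}, for each $P=P^a$ the process $X^1$ solves the McKean--Vlasov equation with flow $\mu^P$, so $\mu_t^P=\mathcal L^P(X_t^1)$. Set
\[
M(t):=\mathbb E^P\Bigl[\sup_{0\le r\le t}|X_r^1|^4\Bigr].
\]
Write $X_t^1=\xi^1+\int_0^t b_s\,ds+\int_0^t\sigma_s a_s^{1/2}dW_s^{1,a}$ and use $(a+b+c)^4\le 27(a^4+b^4+c^4)$. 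The drift is handled by H\"older's inequality, exactly as in Lemma~\ref{lem:fourth-moment}. The stochastic integral is handled by BDG with exponent $4$, together with $a_s\le\overline a$. This gives
\[
M(t)\le C\Bigl(\mathbb E^P|\xi^1|^4+\int_0^t\mathbb E^P\bigl[|b_s|^4+\|\sigma_s\|^4\bigr]ds\Bigr),
\]
where $C$ depends only on $T$, $\overline a$ and the BDG constant.

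The only step that differs from the particle case is the control of the observables. By the Lipschitz property of $\varphi$ (see Lemma~\ref{lem:stability-observables}), $|\Psi_\varphi(\mu_s^P)|\le|\varphi(0)|+L_\varphi\mathbb E^P|X_s^1|$. The same bound holds for $\beta$. I would use the second-moment bound of Proposition~\ref{prop:fixed-point-P-uniform}, $\sup_P\mathbb E^P[\sup_t|X^P_t|^2]\le C_T$. It gives directly that $|\Psi_\varphi(\mu_s^P)|+|\Psi_\beta(\mu_s^P)|\le C$ uniformly in $s$ and $P$. The observables then act as bounded deterministic inputs.

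Alternatively, to avoid relying on that proposition, I would use Jensen's inequality, $\mathbb E^P|X_s^1|\le M(s)^{1/4}\le 1+M(s)$. This bounds the fourth power of the observables by $C(1+M(s))$.

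In either case, linear growth (H2) gives
\[
|b_s|^4+\|\sigma_s\|^4\le C\bigl(1+|X_s^1|^4+M(s)+|u_s|^4\bigr).
\]
Hence
\[
M(t)\le C\Bigl(1+\mathbb E^P|\xi^1|^4+\mathbb E^P\!\int_0^T|u_s|^4ds\Bigr)+C\int_0^t M(s)\,ds.
\]
Under (H1bis) and $(H_u^4)$, the constant term is bounded uniformly in $P$. Gronwall's lemma then gives $M(T)\le Ce^{CT}$. Finally, take the supremum over $P\in\mathcal P$.

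The main obstacle is technical. Gronwall requires $M(t)<\infty$ a priori, since otherwise the inequality is vacuous. I would handle this by localization. Introduce $\tau_n=\inf\{t:|X_t^1|\ge n\}\wedge T$ and run the argument for the stopped process. The observables are deterministic and already bounded, so the stopped estimate closes with constants independent of $n$. Fatou's lemma as $n\to\infty$ then finishes the proof.

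Uniformity in $P$ requires only that all constants come from $\overline a$, the Lipschitz and growth constants, and $T$. The fourth-moment BDG constant is universal under each fixed $P$, so no constant depends on the particular volatility process $a$.
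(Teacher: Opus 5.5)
Your proposal is correct and follows essentially the same route as the paper: the $27(\cdot)$ splitting, H\"older for the drift, fourth-order BDG with $a_s\le\overline a$, a bound on the observables in terms of $M(s)$, then Gronwall uniformly in $P$. Your Jensen variant is exactly the paper's step, and your bound $C(1+M(s))$ for the fourth power of the observables is the correct form of the paper's displayed $M(s)^{1/4}$. Two additions of yours are sound: the localization by $\tau_n$, which guarantees $M(t)<\infty$ before Gronwall is applied and is left implicit in the paper, and the optional shortcut via Proposition~\ref{prop:fixed-point-P-uniform}.
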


\begin{proof}
The limiting process satisfies
\[
X_t^1 = \xi^1 + \int_0^t b(s,X_s^1, \Psi_\varphi(\mu_s^P), u_s)ds + \int_0^t \sigma(s,X_s^1, \Psi_\beta(\mu_s^P), u_s)a_s^{1/2}dW_s^{1,a}.
\]

As in the proof of Lemma~\ref{lem:fourth-moment}, the inequality $(a+b+c)^4 \le 27(a^4+b^4+c^4)$ together with H\"older's and Burkholder--Davis--Gundy inequalities gives
\[
\mathbb E^P\Big[\sup_{0\le r\le t}|X_r^1|^4\Big] \le C\Big(\mathbb E^P|\xi^1|^4 + \int_0^t \mathbb E^P[|b_s|^4 + \|\sigma_s\|^4] ds\Big).
\]

By linear growth of the coefficients and collective observables (H2), we have
\[
|\Psi_\varphi(\mu_s^P)| + |\Psi_\beta(\mu_s^P)| \le C(1 + (\mathbb E^P|X_s^1|^2)^{1/2}).
\]
Moreover, $(\mathbb E^P|X_s^1|^2)^{1/2} \le (\mathbb E^P[\sup_{0\le r\le s}|X_r^1|^4])^{1/4} = M(s)^{1/4}$, where $M(s) = \mathbb E^P[\sup_{0\le r\le s}|X_r^1|^4]$. Thus,
\[
|b_s|^4 + \|\sigma_s\|^4 \le C(1 + |X_s^1|^4 + |u_s|^4 + M(s)^{1/4}).
\]

Set $M(t) = \mathbb E^P[\sup_{0\le r\le t}|X_r^1|^4]$. Then $M(t) \le C + C\int_0^t M(s)\,ds$. Gronwall's lemma gives $M(t) \le C e^{Ct}$ for every $t\le T$, hence
\[
\sup_{P\in\mathcal P} \mathbb E^P\Big[\sup_{0\le t\le T}|X_t^1|^4\Big] \le C,
\]
with $C$ independent of $P$.
\end{proof}

Combining the preceding estimates with the stability of the forward dynamics, we obtain a quantitative robust propagation of chaos result.

\begin{lemma}[Control of coefficient differences for propagation of chaos]
\label{lem:coeff-chaos}

Under assumptions {\rm(H2)} and {\rm(H4)}, there exists a constant $C>0$, independent of $N$ and $P\in\mathcal P$, such that for every $s\in[0,T]$,

\[
|\Delta b_s|^2 + \operatorname{Tr}(\Delta\sigma_s a_s\Delta\sigma_s^\top)
\le
C\Bigl(
|\delta X_s^1|^2
+
W_2^2(\mu_s^{N,P},\mu_s^P)
\Bigr),
\]

where

\[
\delta X_s^1 := X_s^{1,N} - X_s^1,
\]

\[
\Delta b_s := b(s, X_s^{1,N}, \Psi_\varphi(\mu_s^{N,P}), u_s^1)
          - b(s, X_s^1, \Psi_\varphi(\mu_s^P), u_s^1),
\]

\[
\Delta\sigma_s := \sigma(s, X_s^{1,N}, \Psi_\beta(\mu_s^{N,P}), u_s^1)
               - \sigma(s, X_s^1, \Psi_\beta(\mu_s^P), u_s^1).
\]

\end{lemma}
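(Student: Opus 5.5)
The estimate is pointwise in $(s,\omega)$, so the plan is to split each coefficient difference into a state increment and a measure increment. The two increments are then controlled by the Lipschitz property (H2) and by Lemma~\ref{lem:stability-observables}.

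\textbf{Drift.} Since both evaluations share the same time $s$ and control $u_s^1$, (H2) gives directly
\[
|\Delta b_s|\le L\bigl(|\delta X_s^1|+|\Psi_\varphi(\mu_s^{N,P})-\Psi_\varphi(\mu_s^P)|\bigr).
\]
Both $\mu_s^{N,P}$ (a finite empirical measure) and $\mu_s^P$ lie in $\mathcal P_2(\mathbb R^d)$; for the latter we use Proposition~\ref{prop:fixed-point-P-uniform}. So Lemma~\ref{lem:stability-observables} applies $\omega$-wise to the random empirical measure and yields
\[
|\Psi_\varphi(\mu_s^{N,P})-\Psi_\varphi(\mu_s^P)|\le L_\varphi W_2(\mu_s^{N,P},\mu_s^P).
\]
Squaring and using $(a+b)^2\le 2a^2+2b^2$ gives $|\Delta b_s|^2\le 2L^2\bigl(|\delta X_s^1|^2+L_\varphi^2W_2^2(\mu_s^{N,P},\mu_s^P)\bigr)$.

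\textbf{Diffusion.} We first remove the volatility weight uniformly in $P$. Since $a_s\le\overline a$, we have $\operatorname{Tr}(\Delta\sigma_s a_s\Delta\sigma_s^\top)\le \|\overline a\|\,\|\Delta\sigma_s\|_F^2$, where the constant $\|\overline a\|$ depends only on the admissible volatility set and not on $P=P^a$. The same splitting as for the drift, now with $\Psi_\beta$ and $L_\beta$, then bounds $\|\Delta\sigma_s\|_F^2$ by $2L^2\bigl(|\delta X_s^1|^2+L_\beta^2W_2^2(\mu_s^{N,P},\mu_s^P)\bigr)$.

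Summing the two bounds gives the claim with $C=2L^2(1+\|\overline a\|)(1+L_\varphi^2+L_\beta^2)$, which is independent of $N$ and $P$. Assumption (H4) is not needed for this particular bound. The only delicate point is that $\mu_s^{N,P}$ is random, so Lemma~\ref{lem:stability-observables} must be applied for each fixed $\omega$ rather than after taking expectations. This is legitimate because the lemma is a deterministic statement about two measures in $\mathcal P_2$.
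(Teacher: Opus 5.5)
Your proposal is correct and follows the paper's proof essentially step for step. The shared steps are the Lipschitz splitting from (H2), the Wasserstein stability of $\Psi_\varphi,\Psi_\beta$ via Lemma~\ref{lem:stability-observables}, and the bound $a_s\le\overline a$, which removes the volatility weight uniformly in $P$. Your observations that the stability lemma is applied $\omega$-wise to the random empirical measure, and that (H4) is never actually used, are accurate points the paper leaves implicit.
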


\begin{proof}

We prove the estimate for the drift coefficient $b$. The reasoning for the diffusion coefficient $\sigma$ is identical.

\medskip

\noindent\textbf{Estimate by the Lipschitz property.}
By the uniform Lipschitz assumption (H2), there exists a constant $L_b>0$ such that, for all $x,x'\in\mathbb R^d$, $r,r'\in\mathbb R$ and $u\in U$,

\[
|b(t,x,r,u)-b(t,x',r',u)|
\le
L_b(|x-x'| + |r-r'|).
\]

Applying this property to the two terms appearing in $\Delta b_s$, we obtain

\[
|\Delta b_s|
\le
L_b
\Bigl(
|X_s^{1,N} - X_s^1|
+
|\Psi_\varphi(\mu_s^{N,P}) - \Psi_\varphi(\mu_s^P)|
\Bigr).
\]

Since $\delta X_s^1 = X_s^{1,N} - X_s^1$, we have

\[
|\Delta b_s|
\le
L_b
\Bigl(
|\delta X_s^1|
+
|\Psi_\varphi(\mu_s^{N,P}) - \Psi_\varphi(\mu_s^P)|
\Bigr).
\]

Squaring and using the elementary inequality $(a+b)^2 \le 2a^2 + 2b^2$, we obtain

\[
|\Delta b_s|^2
\le
2L_b^2 |\delta X_s^1|^2
+
2L_b^2
|\Psi_\varphi(\mu_s^{N,P}) - \Psi_\varphi(\mu_s^P)|^2.
\tag{5.2}
\]

\medskip

\noindent\textbf{Wasserstein stability of the observable $\Psi_\varphi$.}
The function $\varphi$ is Lipschitz with constant $L_\varphi$ by assumption. Lemma~\ref{lem:stability-observables} (Wasserstein stability of collective observables) implies

\[
|\Psi_\varphi(\mu_s^{N,P}) - \Psi_\varphi(\mu_s^P)|
\le
L_\varphi \, W_2(\mu_s^{N,P},\mu_s^P).
\]

Substituting this estimate into (5.2), we obtain

\[
|\Delta b_s|^2
\le
2L_b^2 |\delta X_s^1|^2
+
2L_b^2 L_\varphi^2 \, W_2^2(\mu_s^{N,P},\mu_s^P).
\]

Gathering constants, there exists $C_b > 0$ such that

\[
|\Delta b_s|^2
\le
C_b
\Bigl(
|\delta X_s^1|^2
+
W_2^2(\mu_s^{N,P},\mu_s^P)
\Bigr).
\tag{5.3}
\]

\medskip

\noindent\textbf{Estimate for the diffusion coefficient.}
Assumption (H2) provides a constant $L_\sigma > 0$ such that

\[
\|\sigma(t,x,r,u) - \sigma(t,x',r',u)\|
\le
L_\sigma
(|x-x'| + |r-r'|).
\]

Consequently,

\[
\|\Delta\sigma_s\|
\le
L_\sigma
\Bigl(
|\delta X_s^1|
+
|\Psi_\beta(\mu_s^{N,P}) - \Psi_\beta(\mu_s^P)|
\Bigr).
\]

Using again $(a+b)^2 \le 2a^2 + 2b^2$, we obtain

\[
\|\Delta\sigma_s\|^2
\le
2L_\sigma^2 |\delta X_s^1|^2
+
2L_\sigma^2
|\Psi_\beta(\mu_s^{N,P}) - \Psi_\beta(\mu_s^P)|^2.
\]

Since $\beta$ is Lipschitz with constant $L_\beta$, the observables stability lemma gives

\[
|\Psi_\beta(\mu_s^{N,P}) - \Psi_\beta(\mu_s^P)|
\le
L_\beta \, W_2(\mu_s^{N,P},\mu_s^P).
\]

Since \(a_s\le\overline a\), there exists \(C_a>0\), depending only on the volatility uncertainty set, such that
\(\operatorname{Tr}(\Delta\sigma_s a_s\Delta\sigma_s^\top)\le C_a\|\Delta\sigma_s\|^2\).
There exists therefore $C_\sigma > 0$ such that

\[
\operatorname{Tr}(\Delta\sigma_s a_s\Delta\sigma_s^\top)
\le
C_\sigma
\Bigl(
|\delta X_s^1|^2
+
W_2^2(\mu_s^{N,P},\mu_s^P)
\Bigr).
\tag{5.4}
\]

\medskip

\noindent\textbf{Conclusion of the argument.}
Adding (5.3) and (5.4), we obtain

\[
|\Delta b_s|^2 + \operatorname{Tr}(\Delta\sigma_s a_s\Delta\sigma_s^\top)
\le
(C_b + C_\sigma)
\Bigl(
|\delta X_s^1|^2
+
W_2^2(\mu_s^{N,P},\mu_s^P)
\Bigr).
\]

Setting $C := C_b + C_\sigma$, we conclude that

\[
|\Delta b_s|^2 + \operatorname{Tr}(\Delta\sigma_s a_s\Delta\sigma_s^\top)
\le
C
\Bigl(
|\delta X_s^1|^2
+
W_2^2(\mu_s^{N,P},\mu_s^P)
\Bigr).
\]

The proof is complete.

\end{proof}

The preceding estimate reduces the study of propagation of chaos to controlling the quantity $W_2(\mu_s^{N,P},\mu_s^P)$. The classical strategy consists in decomposing this distance into a coupling error and an empirical approximation error. The former will be related to the trajectory differences of the particles, while the latter will be estimated using the quantitative results of Fournier and Guillin. The following corollary formalizes this decomposition.

\begin{corollary}[Decomposition of the Wasserstein term for propagation of chaos]
\label{cor:wasserstein-decomposition}

Under the assumptions of Lemma~\ref{lem:coeff-chaos}, for every $s\in[0,T]$,

\[
W_2^2(\mu_s^{N,P},\mu_s^P)
\le
2\,W_2^2(\mu_s^{N,P},\bar\mu_s^{N,P})
+
2\,W_2^2(\bar\mu_s^{N,P},\mu_s^P),
\]

where

\[
\bar\mu_s^{N,P}
=
\frac1N
\sum_{i=1}^N
\delta_{X_s^i}
\]

denotes the empirical measure associated with the independent limiting particles $(X_s^i)_{1\le i\le N}$.

Consequently,

\[
|\Delta b_s|^2+\operatorname{Tr}(\Delta\sigma_s a_s\Delta\sigma_s^\top)
\le
C
\Bigl(
|\delta X_s^1|^2
+
W_2^2(\mu_s^{N,P},\bar\mu_s^{N,P})
+
W_2^2(\bar\mu_s^{N,P},\mu_s^P)
\Bigr),
\]

where the constant $C>0$ is independent of $N$ and $P\in\mathcal P$.

\end{corollary}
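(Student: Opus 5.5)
The plan is to treat the statement as two short consequences: the triangle inequality in \((\mathcal P_2(\mathbb R^d),W_2)\), followed by substitution into Lemma~\ref{lem:coeff-chaos}. No stochastic analysis is needed at this stage, because the estimate holds pathwise for every fixed \(\omega\) and \(s\). In particular, it does not depend on \(P\) beyond the constant already obtained in Lemma~\ref{lem:coeff-chaos}.

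\textbf{Step 1 (triangle inequality).} For fixed \(s\) and \(\omega\), the three measures \(\mu_s^{N,P}\), \(\bar\mu_s^{N,P}\) and \(\mu_s^P\) all lie in \(\mathcal P_2(\mathbb R^d)\). The two empirical measures are finite sums of Dirac masses, and \(\mu_s^P\) has a finite second moment by Proposition~\ref{prop:fixed-point-P-uniform}. Since \(W_2\) is a metric, we have
\[
W_2(\mu_s^{N,P},\mu_s^P)\le W_2(\mu_s^{N,P},\bar\mu_s^{N,P})+W_2(\bar\mu_s^{N,P},\mu_s^P).
\]
Squaring this and using \((a+b)^2\le 2a^2+2b^2\) gives the first displayed inequality.

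\textbf{Step 2 (substitution).} Insert the bound from Step 1 into the conclusion of Lemma~\ref{lem:coeff-chaos}. That lemma has constant \(C\), independent of \(N\) and \(P\). The result is
\[
|\Delta b_s|^2+\operatorname{Tr}(\Delta\sigma_s a_s\Delta\sigma_s^\top)\le C|\delta X_s^1|^2+2C\,W_2^2(\mu_s^{N,P},\bar\mu_s^{N,P})+2C\,W_2^2(\bar\mu_s^{N,P},\mu_s^P).
\]
Renaming the constant as \(2C\) gives the stated bound. The new constant depends only on \(L_b,L_\sigma,L_\varphi,L_\beta\) and on the bound \(\overline a\) of the volatility set. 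It is therefore still independent of \(N\) and of \(P\in\mathcal P\).

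\textbf{Main point of care.} There is no real obstacle; the only thing to check is that the coupling term is well defined. It is worth noting, for later use, that the coupling term can be bounded by the explicit matching \(X_s^{j,N}\leftrightarrow X_s^j\):
\[
W_2^2(\mu_s^{N,P},\bar\mu_s^{N,P})\le\frac1N\sum_{j=1}^N|X_s^{j,N}-X_s^j|^2.
\]
This bound holds because the uniform coupling on pairs of indices is admissible. Together with exchangeability, it is what will later reduce the coupling term to \(\mathbb E^P|\delta X_s^1|^2\). However, that reduction is not part of the present statement.
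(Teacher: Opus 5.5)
Your proposal is correct and follows the paper's own proof essentially verbatim. The paper likewise applies the triangle inequality for $W_2$, squares it using $(a+b)^2\le 2a^2+2b^2$, and substitutes the result into Lemma~\ref{lem:coeff-chaos} with the constant doubled. Your closing remark on the index-matching coupling is not needed for this statement, but it is exactly the bound the paper uses later in the proof of Lemma~\ref{lem:propagation-forward}.
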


\begin{proof}

The Wasserstein distance $W_2$ is a metric on $\mathcal P_2(\mathbb R^n)$. The triangle inequality therefore gives

\[
W_2(\mu_s^{N,P},\mu_s^P)
\le
W_2(\mu_s^{N,P},\bar\mu_s^{N,P})
+
W_2(\bar\mu_s^{N,P},\mu_s^P).
\]

Using the elementary inequality $(a+b)^2 \le 2a^2+2b^2$, we obtain

\[
W_2^2(\mu_s^{N,P},\mu_s^P)
\le
2W_2^2(\mu_s^{N,P},\bar\mu_s^{N,P})
+
2W_2^2(\bar\mu_s^{N,P},\mu_s^P).
\tag{5.5}
\]

By Lemma~\ref{lem:coeff-chaos},

\[
|\Delta b_s|^2 + \operatorname{Tr}(\Delta\sigma_s a_s\Delta\sigma_s^\top)
\le
C_0
\Bigl(
|\delta X_s^1|^2
+
W_2^2(\mu_s^{N,P},\mu_s^P)
\Bigr).
\]

Injecting estimate (5.5) into this relation, we obtain

\[
|\Delta b_s|^2 + \operatorname{Tr}(\Delta\sigma_s a_s\Delta\sigma_s^\top)
\le
C_0
\Bigl(
|\delta X_s^1|^2
+
2W_2^2(\mu_s^{N,P},\bar\mu_s^{N,P})
+
2W_2^2(\bar\mu_s^{N,P},\mu_s^P)
\Bigr).
\]

Setting $C := 2C_0$ and rearranging terms, we obtain

\[
|\Delta b_s|^2 + \operatorname{Tr}(\Delta\sigma_s a_s\Delta\sigma_s^\top)
\le
C
\Bigl(
|\delta X_s^1|^2
+
W_2^2(\mu_s^{N,P},\bar\mu_s^{N,P})
+
W_2^2(\bar\mu_s^{N,P},\mu_s^P)
\Bigr).
\]

The proof is complete.

\end{proof}

\subsection{Forward propagation of chaos}

\subsubsection{Decomposition of the law error}

Let $\bar\mu_t^{N,P} = \frac1N\sum_{j=1}^N\delta_{X_t^j}$ be the empirical measure of the independent system (where the $X_t^j$ are solutions of the limiting system). By the triangle inequality for $W_2$,
\[
W_2(\mu_t^{N,P},\mu_t^P) \le W_2(\mu_t^{N,P},\bar\mu_t^{N,P}) + W_2(\bar\mu_t^{N,P},\mu_t^P). \tag{5.1}
\]

\subsubsection{Uniform control of the measure error (Fournier--Guillin)}

The second term is controlled by a classical result.

The variables $X_t^1,\ldots,X_t^N$ are i.i.d. with law $\mu_t^P$ under each $P\in\mathcal P$. The Fournier--Guillin theorem \cite{FournierGuillin2015} is therefore applicable and provides a constant
\[
C_{FG}(P,d) = C(d, \sup_{t\in[0,T]} \mathbb E^P|X_t^1|^4).
\]
Lemma~\ref{lem:fourth-moment-limit} ensures that
\[
\sup_{P\in\mathcal P}\sup_{t\in[0,T]}\mathbb E^P|X_t^1|^4
\le M_4<\infty.
\]
Consequently, \(\sup_{P\in\mathcal P} C_{FG}(P,d)\le C(d,M_4)=:C_{FG}(d)<\infty\). Thus, for every \(t\in[0,T]\),
\[
\sup_{P\in\mathcal P} \mathbb E^P[W_2^2(\bar\mu_t^{N,P},\mu_t^P)] \le C_{FG}(d)\,\tau(N,d). \tag{5.2}
\]
The rate $\tau(N,d)$ is given by
\[
\tau(N,d) = \begin{cases}
N^{-1/2}, & d\le 3,\\
N^{-1/2}(\log N)^{1/2}, & d=4,\\
N^{-2/d}, & d\ge 5.
\end{cases}
\]

\subsubsection{Forward propagation of chaos lemma}

\begin{lemma}[Forward propagation of chaos]
\label{lem:propagation-forward}

Under assumptions {\rm(H1)--(H2)}, {\rm(H1bis)} and ${\rm(H_u^4)}$, there exists a constant $C_{\mathrm{prop}}>0$, independent of $N$ and $P\in\mathcal P$, such that
\[
\sup_{P\in\mathcal P}
\sup_{0\le t\le T}
\mathbb E^P
\Bigl[
W_2^2
\bigl(
\mu_t^{N,P},
\bar\mu_t^{N,P}
\bigr)
\Bigr]
\le
C_{\mathrm{prop}}
\,\tau(N,d).
\]
\end{lemma}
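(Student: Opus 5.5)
Our plan is the standard synchronous coupling argument. For fixed $P=P^a$, we drive the particle $X^{i,N}$ and the limiting copy $X^i$ with the same Brownian motion $W^{i,a}$ and the same initial datum $\xi^i$. The empirical measures $\mu_t^{N,P}$ and $\bar\mu_t^{N,P}$ are then coupled through the index-matching coupling $\frac1N\sum_i\delta_{(X_t^{i,N},X_t^i)}$, which gives
\[
W_2^2\bigl(\mu_t^{N,P},\bar\mu_t^{N,P}\bigr)\le \frac1N\sum_{i=1}^N|X_t^{i,N}-X_t^i|^2 .
\]
By exchangeability, the pairs $(X^{i,N},X^i)$ all have the same law under $P$. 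Taking expectations therefore reduces the lemma to bounding
\[
\Phi(t):=\mathbb E^P\Bigl[\sup_{0\le r\le t}|\delta X_r^1|^2\Bigr],\qquad \delta X^1=X^{1,N}-X^1,
\]
by $C\,\tau(N,d)$ with $C$ independent of $N$ and $P$.

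\textbf{Estimating $\Phi$.} We write $\delta X_t^1=\int_0^t\Delta b_s\,ds+\int_0^t\Delta\sigma_s a_s^{1/2}dW_s^{1,a}$; the initial difference vanishes. Cauchy--Schwarz on the drift and BDG on the martingale part, with the uniform constant $C_{\mathrm{BDG}}$ and the bound $a_s\le\overline a$, give
\[
\Phi(t)\le C\,\mathbb E^P\int_0^t\Bigl(|\Delta b_s|^2+\operatorname{Tr}(\Delta\sigma_s a_s\Delta\sigma_s^\top)\Bigr)ds .
\]
Corollary~\ref{cor:wasserstein-decomposition} bounds the integrand by
\[
C\Bigl(|\delta X_s^1|^2+W_2^2(\mu_s^{N,P},\bar\mu_s^{N,P})+W_2^2(\bar\mu_s^{N,P},\mu_s^P)\Bigr).
\]
We then apply the coupling bound above, exchangeability, and the Fournier--Guillin estimate (5.2). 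The last two are the key inputs: exchangeability gives $\mathbb E^P W_2^2(\mu_s^{N,P},\bar\mu_s^{N,P})\le\Phi(s)$, and (5.2) gives $\mathbb E^P W_2^2(\bar\mu_s^{N,P},\mu_s^P)\le C_{FG}(d)\tau(N,d)$. Together these yield
\[
\Phi(t)\le C\int_0^t\Phi(s)\,ds+C\,T\,C_{FG}(d)\,\tau(N,d).
\]

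\textbf{Conclusion.} Gronwall's lemma gives $\Phi(T)\le C_T C_{FG}(d)\tau(N,d)e^{CT}$. Feeding this back into the coupling bound, we get for every $t$
\[
\mathbb E^P W_2^2\bigl(\mu_t^{N,P},\bar\mu_t^{N,P}\bigr)\le\Phi(T).
\]
We finally take the supremum over $t$ and over $P$.

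\textbf{Main obstacle: uniformity in $P$.} Every constant must be free of $P$, and three facts deliver this:
\begin{itemize}
\item the BDG constant is uniform because $\underline a\le a_t\le\overline a$;
\item the Lipschitz constants in Lemma~\ref{lem:coeff-chaos} do not depend on $P$;
\item $C_{FG}$ is uniform thanks to the fourth-moment bound of Lemma~\ref{lem:fourth-moment-limit}, which is exactly where (H1bis) and $(H_u^4)$ enter.
\end{itemize}
A secondary point is that the particle system is well posed for each $N$. This holds because its coefficients are Lipschitz in $(x^1,\dots,x^N)$: empirical observables are Lipschitz in the particle positions by Lemma~\ref{lem:stability-observables}. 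Lemma~\ref{lem:fourth-moment} then guarantees that $\Phi$ is finite, so Gronwall's lemma applies.
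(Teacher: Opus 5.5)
Your proposal is correct and follows essentially the same route as the paper: the index-matching coupling $W_2^2(\mu_t^{N,P},\bar\mu_t^{N,P})\le\frac1N\sum_i|X_t^{i,N}-X_t^i|^2$, exchangeability, the Lipschitz and observable-stability bounds with the triangle-inequality split, the uniform Fournier--Guillin estimate (5.2), and Gronwall. The only differences are cosmetic: you control $\mathbb E^P[\sup_{r\le t}|\delta X_r^1|^2]$ via BDG at fixed $P$ and take the supremum over $P$ at the end, whereas the paper applies It\^o's formula to $|\delta X_t^1|^2$ and runs Gronwall directly on $\Delta(t)=\sup_{P}\mathbb E^P|\delta X_t^1|^2$.
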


\begin{proof}
Since both systems are driven by the same control $u$, the contributions associated with $u$ cancel in the difference. Set $\delta X_t^i = X_t^{i,N} - X_t^i$.

The empirical coupling
\[
\pi_t^N = \frac1N \sum_{i=1}^N \delta_{(X_t^{i,N}, X_t^i)}
\]
is an admissible coupling between $\mu_t^{N,P}$ and $\bar\mu_t^{N,P}$.

By definition of the Wasserstein distance,
\[
W_2^2(\mu_t^{N,P},\bar\mu_t^{N,P})
\le
\int |x-y|^2\,\pi_t^N(dx,dy).
\]

We then obtain
\[
W_2^2(\mu_t^{N,P},\bar\mu_t^{N,P})
\le
\frac1N
\sum_{i=1}^{N}
|X_t^{i,N}-X_t^{i}|^2.
\]

Taking expectation and using exchangeability,
\[
\mathbb E^P
\Bigl[
W_2^2(\mu_t^{N,P},\bar\mu_t^{N,P})
\Bigr]
\le
\mathbb E^P
\Bigl[
|X_t^{1,N}-X_t^{1}|^2
\Bigr].
\]

This estimate constitutes the fundamental link between the measure error and the trajectory error.

By exchangeability of the particles, define
\[
\Delta(t)
:=
\sup_{P\in\mathcal P}\mathbb E^P[|\delta X_t^1|^2].
\]
Then
\[
\sup_{P\in\mathcal P}
\mathbb E^P\!\left[
W_2^2(\mu_t^{N,P},\bar\mu_t^{N,P})
\right]
\le \Delta(t).
\]

Applying It\^o's formula to $|\delta X_t^1|^2$, we obtain
\[
\begin{aligned}
d|\delta X_t^1|^2
&= 2\delta X_t^1\cdot\Delta b_t\,dt
+ 2\delta X_t^1\cdot\Delta\sigma_t a_t^{1/2}dW_t^{1,a}\\
&\quad
+ \operatorname{Tr}(\Delta\sigma_t a_t\Delta\sigma_t^\top)\,dt.
\end{aligned}
\]

Taking expectation and integrating from $0$ to $t$:
\[
\mathbb E^P[|\delta X_t^1|^2] \le 2\int_0^t\mathbb E^P[|\delta X_s^1||\Delta b_s|]ds + \int_0^t\mathbb E^P[\operatorname{Tr}(\Delta\sigma_s a_s\Delta\sigma_s^\top)]ds.
\]

Applying Young's inequality $2ab \le a^2 + b^2$, we obtain
\[
2|\delta X_s^1||\Delta b_s| \le |\delta X_s^1|^2 + |\Delta b_s|^2.
\]
Thus,
\[
\mathbb E^P[|\delta X_t^1|^2] \le C\int_0^t \mathbb E^P[|\delta X_s^1|^2 + |\Delta b_s|^2 + \operatorname{Tr}(\Delta\sigma_s a_s\Delta\sigma_s^\top)]ds.
\]

By the Lipschitz assumption (H2) and Lemma~\ref{lem:stability-observables}, we have
\[
|\Psi_\varphi(\mu_s^{N,P}) - \Psi_\varphi(\mu_s^P)| \le L_\varphi W_2(\mu_s^{N,P},\mu_s^P),
\]
and similarly for $\Psi_\beta$. Consequently,
\[
|\Delta b_s|^2 + \operatorname{Tr}(\Delta\sigma_s a_s\Delta\sigma_s^\top) \le C(|\delta X_s^1|^2 + W_2^2(\mu_s^{N,P},\mu_s^P)).
\]

Using (5.1) and the inequality $(a+b)^2 \le 2a^2 + 2b^2$,
\[
W_2^2(\mu_s^{N,P},\mu_s^P) \le 2W_2^2(\mu_s^{N,P},\bar\mu_s^{N,P}) + 2W_2^2(\bar\mu_s^{N,P},\mu_s^P).
\]

Thus,
\[
\mathbb E^P[|\delta X_t^1|^2] \le C\int_0^t\mathbb E^P[|\delta X_s^1|^2]ds + C\int_0^t\mathbb E^P[W_2^2(\mu_s^{N,P},\bar\mu_s^{N,P})]ds + C\int_0^t\mathbb E^P[W_2^2(\bar\mu_s^{N,P},\mu_s^P)]ds.
\]

Taking the supremum over $P$. By (5.2), the last term is bounded by $C\tau(N,d)$. Thus,
\[
\Delta(t) \le C\int_0^t\Delta(s)ds + C\int_0^t \sup_{P}\mathbb E^P[W_2^2(\mu_s^{N,P},\bar\mu_s^{N,P})]ds + C\tau(N,d).
\]
But $\sup_{P}\mathbb E^P[W_2^2(\mu_s^{N,P},\bar\mu_s^{N,P})] \le \Delta(s)$, hence
\[
\Delta(t) \le C\int_0^t\Delta(s)ds + C\tau(N,d).
\]

Gronwall's lemma gives $\Delta(t)\le C\tau(N,d)$ for every $t\in[0,T]$. Since the obtained estimate is uniform in $t$, we can take the supremum over $[0,T]$:
\[
\sup_{0\le t\le T}\Delta(t) \le C\tau(N,d).
\]
Consequently,
\[
\sup_{0\le t\le T}\sup_{P\in\mathcal P}\mathbb E^P[W_2^2(\mu_t^{N,P},\bar\mu_t^{N,P})] \le C\tau(N,d).
\]
Hence the result.
\end{proof}

\subsubsection{Robust propagation of chaos theorem}

\begin{theorem}[Robust propagation of chaos]
\label{thm:robust-chaos}

Under assumptions {\rm(H1)--(H2)}, {\rm(H1bis)} and ${\rm(H_u^4)}$, there exists a constant $C>0$, independent of $N$ and $P\in\mathcal P$, such that
\[
\sup_{P\in\mathcal P}
\sup_{0\le t\le T}
\mathbb E^P
\Bigl[
W_2^2
\bigl(
\mu_t^{N,P},
\mu_t^P
\bigr)
\Bigr]
\le
C\,\tau(N,d).
\]

In particular,
\[
\sup_{P\in\mathcal P}
\sup_{0\le t\le T}
\mathbb E^P
\Bigl[
W_2^2(\mu_t^{N,P},\mu_t^P)
\Bigr]
\longrightarrow 0,
\qquad N\to\infty.
\]

\end{theorem}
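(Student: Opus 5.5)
The estimate follows by combining the two bounds already obtained, through the triangle inequality (5.1) for $W_2$. Fix $P\in\mathcal P$ and $t\in[0,T]$. Using (5.1) and $(a+b)^2\le 2a^2+2b^2$, we write
\[
W_2^2(\mu_t^{N,P},\mu_t^P)\le 2\,W_2^2(\mu_t^{N,P},\bar\mu_t^{N,P})+2\,W_2^2(\bar\mu_t^{N,P},\mu_t^P),
\]
where $\bar\mu_t^{N,P}$ is the empirical measure of the i.i.d. limiting particles $X^1,\dots,X^N$ from \eqref{eq:limit_system}. We then take $\mathbb E^P$ on both sides.

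The first term is controlled by Lemma~\ref{lem:propagation-forward}. That lemma rests on the synchronous coupling $\pi^N_t$, exchangeability and Gronwall, and gives $\sup_P\sup_t\mathbb E^P[W_2^2(\mu_t^{N,P},\bar\mu_t^{N,P})]\le C_{\mathrm{prop}}\tau(N,d)$. The second term is controlled by the Fournier--Guillin bound (5.2), which gives $\sup_P\mathbb E^P[W_2^2(\bar\mu_t^{N,P},\mu_t^P)]\le C_{FG}(d)\tau(N,d)$. The constant $C_{FG}(d)$ is uniform in $P$ and $t$ because of the fourth-moment bound $M_4$ of Lemma~\ref{lem:fourth-moment-limit}, which holds under (H1bis) and $(H_u^4)$. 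Note that (5.2) is stated pointwise in $t$ with a $t$-independent constant, so taking the supremum over $t$ costs nothing.

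Adding the two bounds, taking the supremum over $t\in[0,T]$ and then over $P\in\mathcal P$, we obtain the claimed inequality with $C:=2(C_{\mathrm{prop}}+C_{FG}(d))$. This constant is independent of $N$ and $P$, since both ingredients are. The second assertion follows at once because $\tau(N,d)\to0$ as $N\to\infty$ in every dimension regime, including the logarithmic case $d=4$.

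The main point needing care is not the combination but the uniformity of the Fournier--Guillin constant over the non-dominated family. We rely on the fact that the constant in \cite{FournierGuillin2015} depends on the law only through a moment of order strictly greater than $2$, here the fourth moment. Lemma~\ref{lem:fourth-moment-limit} bounds this moment uniformly in $P$ and $t$. One should also check that the rates $\tau(N,d)$ listed above are the ones valid for $p=2$, $q=4$; if in some dimension the Fournier--Guillin rate were slightly different, the same argument would go through with the corrected $\tau$. Everything else is the triangle inequality and the earlier lemmas.
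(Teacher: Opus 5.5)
Your proposal is correct and follows the paper's proof essentially line for line. It uses the same splitting via (5.1) and $(a+b)^2\le 2a^2+2b^2$, Lemma~\ref{lem:propagation-forward} for the coupling term, and the Fournier--Guillin bound (5.2) made uniform in $P$ by the fourth-moment bound of Lemma~\ref{lem:fourth-moment-limit}, which yields $C=2(C_{\mathrm{prop}}+C_{FG})$. Your caveat on $\tau(N,d)$ is well founded, since with exactly $q=4=2p$ the Fournier--Guillin theorem as stated does not cover $d\le 4$ (the argument then loses logarithmic factors), and even for $q>4$ its rate for $W_2^2$ in $d=4$ is $N^{-1/2}\log(1+N)$ rather than $N^{-1/2}(\log N)^{1/2}$. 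This affects (5.2), hence Lemma~\ref{lem:propagation-forward}, and the paper's own proof in exactly the same way, so the structure of your argument is unaffected.
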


\begin{proof}
By the triangle inequality for $W_2$,
\[
W_2(\mu_t^{N,P},\mu_t^P)
\le
W_2(\mu_t^{N,P},\bar\mu_t^{N,P})
+
W_2(\bar\mu_t^{N,P},\mu_t^P).
\]

Using $(a+b)^2\le 2a^2+2b^2$, we obtain
\[
W_2^2(\mu_t^{N,P},\mu_t^P)
\le
2W_2^2(\mu_t^{N,P},\bar\mu_t^{N,P})
+
2W_2^2(\bar\mu_t^{N,P},\mu_t^P).
\]

Taking expectation under $P$:
\[
\mathbb E^P[W_2^2(\mu_t^{N,P},\mu_t^P)]
\le
2\mathbb E^P[W_2^2(\mu_t^{N,P},\bar\mu_t^{N,P})]
+
2\mathbb E^P[W_2^2(\bar\mu_t^{N,P},\mu_t^P)].
\]

Lemma~\ref{lem:propagation-forward} provides
\[
\sup_{P\in\mathcal P}
\sup_{0\le t\le T}
\mathbb E^P[W_2^2(\mu_t^{N,P},\bar\mu_t^{N,P})]
\le
C_1\tau(N,d).
\]

The application of the Fournier--Guillin theorem requires a uniform control of fourth moments. Lemma~\ref{lem:fourth-moment-limit} shows that
\[
\sup_{P\in\mathcal P}
\sup_{t\in[0,T]}
\mathbb E^P|X_t^{1}|^{4}
\le
M_4
<\infty.
\]

The constant appearing in the quantitative Fournier--Guillin estimate depends only on the dimension and this moment bound. There exists therefore a constant $C_{FG}=C(d,M_4)$ independent of $P\in\mathcal P$ such that
\[
\sup_{P\in\mathcal P}
\mathbb E^P[W_2^2(\bar\mu_t^{N,P},\mu_t^{P})]
\le
C_{FG}\tau(N,d).
\]

Combining these two estimates,
\[
\sup_{P\in\mathcal P}
\sup_{0\le t\le T}
\mathbb E^P[W_2^2(\mu_t^{N,P},\mu_t^P)]
\le
2(C_1+C_{FG})\tau(N,d) = C\,\tau(N,d).
\]

\end{proof}

\subsection{Stability of collective observables for the particle system}

\begin{proposition}[Stability of forward collective observables]
\label{prop:stability-observables-particle}

Under the assumptions of Theorem~\ref{thm:robust-chaos}, there exists a constant $C>0$, independent of $N$ and $P\in\mathcal P$, such that
\[
\sup_{P\in\mathcal P}
\sup_{0\le t\le T}
\mathbb E^P
\Bigl[
|\Psi_\varphi(\mu_t^{N,P}) - \Psi_\varphi(\mu_t^P)|^2
\Bigr]
\le
C\,\tau(N,d),
\]
and similarly for $\Psi_\beta$, $\Psi_\gamma$, $\Upsilon$.
\end{proposition}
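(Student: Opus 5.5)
The plan is to reduce the statement directly to Theorem~\ref{thm:robust-chaos} via the Wasserstein stability of collective observables, Lemma~\ref{lem:stability-observables}. The key point is that this lemma is purely deterministic. It holds for any two measures in $\mathcal P_2(\mathbb R^d)$, so it can be applied pathwise, for each fixed $\omega$, to the random empirical measure $\mu_t^{N,P}(\omega)$ and the deterministic law $\mu_t^P$. Both belong to $\mathcal P_2$: the empirical measure is a finite sum of Dirac masses, and $\mu_t^P$ has a finite second moment by Proposition~\ref{prop:fixed-point-P-uniform}.

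\textbf{Main estimate.} For each $t\in[0,T]$, each $P\in\mathcal P$ and $P$-a.e. $\omega$, Lemma~\ref{lem:stability-observables} gives
\[
|\Psi_\varphi(\mu_t^{N,P})-\Psi_\varphi(\mu_t^P)|^2\le L_\varphi^2\,W_2^2(\mu_t^{N,P},\mu_t^P).
\]
We then take $\mathbb E^P$, followed by the supremum over $t\in[0,T]$ and over $P\in\mathcal P$. Since $L_\varphi$ does not depend on $N$, $t$ or $P$, Theorem~\ref{thm:robust-chaos} bounds the right-hand side by $L_\varphi^2\,C\,\tau(N,d)$, which is the claim for $\Psi_\varphi$.

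\textbf{Other observables.} The same argument, with $L_\beta$, $L_\gamma$ and $L_\chi$ in place of $L_\varphi$, gives the estimate for $\Psi_\beta$, $\Psi_\gamma$ and $\Upsilon$. Here $\Upsilon$ is the particular case $\psi=\chi$, as noted after Lemma~\ref{lem:stability-observables}. We take $C$ to be $\max(L_\varphi^2,L_\beta^2,L_\gamma^2,L_\chi^2)$ times the constant of Theorem~\ref{thm:robust-chaos}.

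\textbf{Measurability.} The only point needing care is that $\omega\mapsto\Psi_\varphi(\mu_t^{N,P}(\omega))=\frac1N\sum_j\varphi(X_t^{j,N})$ is a genuine random variable. This is clear, since it is a finite average of Lipschitz functions of the particles. The map $\omega\mapsto W_2(\mu_t^{N,P},\mu_t^P)$ is measurable because $W_2$ is continuous in the empirical atoms. No additional integrability beyond Theorem~\ref{thm:robust-chaos} is required, so we do not expect a real obstacle.
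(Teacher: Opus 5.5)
Your proof is correct and is essentially the paper's argument: the paper also applies Lemma~\ref{lem:stability-observables} to get $|\Psi_\varphi(\mu_t^{N,P})-\Psi_\varphi(\mu_t^P)|^2\le L_\varphi^2W_2^2(\mu_t^{N,P},\mu_t^P)$, then takes expectations and suprema and invokes Theorem~\ref{thm:robust-chaos}, treating $\Psi_\beta$, $\Psi_\gamma$ and $\Upsilon$ identically. Your remarks on applying the lemma pathwise, on measurability and on the explicit choice of $C$ are harmless refinements that the paper leaves implicit.
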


\begin{proof}
By the Wasserstein stability lemma of collective observables (Lemma~\ref{lem:stability-observables}) and Theorem~\ref{thm:robust-chaos},
\[
\sup_{P\in\mathcal P}
\sup_{0\le t\le T}
\mathbb E^P
\Bigl[
|\Psi_\varphi(\mu_t^{N,P}) - \Psi_\varphi(\mu_t^P)|^2
\Bigr]
\le
L_\varphi^2
\sup_{P\in\mathcal P}
\sup_{0\le t\le T}
\mathbb E^P
\Bigl[
W_2^2(\mu_t^{N,P},\mu_t^P)
\Bigr]
\le
C\,\tau(N,d).
\]
The proofs for $\Psi_\beta$, $\Psi_\gamma$, $\Upsilon$ are identical.
\end{proof}

\subsection{Regularized backward propagation of chaos}

The next theorem is the second propagation-of-chaos result of the paper. It is stated as an autonomous theorem in order to clearly separate the robust forward propagation result, which is non-regularized and uniform over \(\mathcal P\), from the backward propagation result, which is obtained for a fixed Yosida parameter \(\lambda>0\).

\begin{theorem}[Backward propagation of chaos for fixed \(\lambda\)]
\label{thm:backward-chaos-fixed-lambda}

Assume {\rm(H1)--(H5)}, {\rm(H1bis)}, {\rm(H5bis)} and ${\rm(H_u^4)}$. Fix \(\lambda>0\). Then there exists a constant \(C_\lambda>0\), independent of \(N\) and \(P\in\mathcal P\), such that, for every \(i=1,\ldots,N\),
\[
\sup_{P\in\mathcal P}
\mathbb E^P
\left[
\sup_{0\le t\le T}
\bigl|Y_t^{i,N,\lambda}-Y_t^{i,\lambda}\bigr|^2
+
\int_0^T
\bigl|Z_t^{i,N,\lambda}-Z_t^{i,\lambda}\bigr|_{a_t}^2\,dt
\right]
\le
C_\lambda\,\tau(N,d).
\]
Moreover, if
\[
\bar\mu_t^{N,Y,\lambda}:=\frac1N\sum_{j=1}^N\delta_{Y_t^{j,\lambda}},
\]
then the empirical law of the regularized backward component satisfies
\[
\sup_{P\in\mathcal P}
\sup_{0\le t\le T}
\mathbb E^P
\left[
W_2^2
\bigl(
\mu_t^{N,Y,\lambda},
\mu_t^{P,Y^\lambda}
\bigr)
\right]
\le
C_\lambda\,\tau(N,d).
\]
\end{theorem}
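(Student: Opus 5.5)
We compare, for fixed \(P=P^a\) and fixed \(\lambda>0\), the particle backward equation in \eqref{eq:particle_system} with the decoupled limiting backward equation in \eqref{eq:limit_system}, driven by the same Brownian motions \(W^{i,a}\). Set \(\delta Y^i:=Y^{i,N,\lambda}-Y^{i,\lambda}\) and \(\delta Z^i:=Z^{i,N,\lambda}-Z^{i,\lambda}\). We apply It\^o's formula to \(|\delta Y_t^i|^2\) on \([t,T]\) and take expectations; by exchangeability it suffices to treat \(i=1\). The terminal difference is bounded using (H4) and Lemma~\ref{lem:stability-observables}:
\[
\mathbb E^P|\delta Y_T^1|^2\le C\bigl(\mathbb E^P|\delta X_T^1|^2+\mathbb E^P W_2^2(\mu_T^{N,P},\mu_T^P)\bigr)\le C\tau(N,d).
\]
Here the last inequality follows from the Gronwall bound \(\Delta(t)\le C\tau(N,d)\) obtained in the proof of Lemma~\ref{lem:propagation-forward}, together with Theorem~\ref{thm:robust-chaos}.

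We split the generator difference into four pieces.
\begin{itemize}
\item[(a)] The \(y\)-increment is handled by the monotonicity (H5), which yields \(-2\gamma|\delta Y^1_s|^2\).
\item[(b)] The \(z\)-increment is bounded by \(L_f|\delta Z_s^1|\). Using Young's inequality and the uniform ellipticity of \(a\), half of \(|\delta Z^1_s|^2_{a_s}\) is absorbed into the left-hand side, at the cost of \(2L_f^2|\delta Y_s^1|^2\).
\item[(c)] The forward increments \((X^{1,N}-X^1,\ \Psi_\gamma(\mu^{N,P})-\Psi_\gamma(\mu^P))\) contribute \(C(|\delta X^1_s|^2+W_2^2(\mu_s^{N,P},\mu_s^P))\), which is \(O(\tau(N,d))\) in expectation.
\item[(d)] The Yosida term: we write
\[
A^\lambda_{\mathrm{ALA}}(X^{1,N}_s,\Upsilon(\mu^{N,P}_s))-A^\lambda_{\mathrm{ALA}}(X^1_s,\Upsilon(\mu^P_s)).
\]
Lipschitz continuity in \(x\) is given by Lemma~\ref{lem:yosida-properties}(i), with constant \(1/\lambda\). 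For the \(r\)-dependence, since \(J^r_\lambda(x)=J^0_\lambda(x-r\mathbf 1)+r\mathbf 1\)-type shift structure (the nonsmooth part depends on \(x-r\mathbf 1\); the \(\varepsilon x\) part only modifies constants), the map \(r\mapsto A^\lambda_{\mathrm{ALA}}(x,r)\) is Lipschitz with constant \(C(1+1/\lambda)\). Hence this piece is bounded by \(C_\lambda(|\delta X_s^1|+|\Upsilon(\mu_s^{N,P})-\Upsilon(\mu_s^P)|)\). This is where the constant depends on \(\lambda\) and blows up as \(\lambda\downarrow0\).
\end{itemize}

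The main obstacle is the backward mean-field term \(\Psi_\delta(\mu_s^{N,Y,\lambda})-\Psi_\delta(\mu_s^{P,Y^\lambda})\). We split it with the empirical measure \(\bar\mu_s^{N,Y,\lambda}\) of the i.i.d. limiting \(Y^{j,\lambda}\):
\[
W_2(\mu_s^{N,Y,\lambda},\mu_s^{P,Y^\lambda})\le\Bigl(\frac1N\sum_j|\delta Y^j_s|^2\Bigr)^{1/2}+W_2(\bar\mu_s^{N,Y,\lambda},\mu_s^{P,Y^\lambda}).
\]
The first part gives \(L_\delta^2\,\mathbb E^P|\delta Y_s^1|^2\) after Young's inequality and exchangeability. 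This is exactly the combination controlled by (H5bis), \(2\gamma>2L_f^2+L_\delta^2\), together with a Gronwall term. The second part is a Fournier--Guillin term in dimension one. To bound it uniformly in \(P\), we need \(\sup_P\sup_t\mathbb E^P|Y^{1,\lambda}_t|^4<\infty\). We obtain this by a fourth-moment estimate for the limiting BSDE analogous to Lemma~\ref{lem:fourth-moment-limit}, using (H1bis), \((H_u^4)\), the growth of \(g\) and \(A^\lambda\), and the \(L^4\) BSDE a priori bound. Since the one-dimensional rate \(N^{-1/2}\) is at most \(\tau(N,d)\), this part is also \(O(\tau(N,d))\).

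Collecting these bounds gives
\[
m(t)+\tfrac12\mathbb E^P\int_t^T|\delta Z^1_s|_{a_s}^2ds\le C_\lambda\tau(N,d)+C\int_t^Tm(s)\,ds,
\qquad m(t):=\mathbb E^P|\delta Y^1_t|^2.
\]
The backward Gronwall lemma then yields \(\sup_t m(t)+\mathbb E^P\int_0^T|\delta Z^1|_a^2\le C_\lambda\tau(N,d)\). The supremum in time inside the expectation follows as in Theorem~\ref{thm:uniform-estimates}, from the integral form of the equation and the BDG inequality uniform in \(P\). All constants depend only on \(\lambda\), the model constants and the uniform moment bounds, so we may take \(\sup_P\).

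For the second claim, we use the triangle inequality, the empirical coupling bound
\[
\mathbb E^P W_2^2(\mu_t^{N,Y,\lambda},\bar\mu_t^{N,Y,\lambda})\le\mathbb E^P|\delta Y^1_t|^2,
\]
and the one-dimensional Fournier--Guillin estimate for \(\bar\mu_t^{N,Y,\lambda}\) established above.
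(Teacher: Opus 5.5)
Your proposal is correct and follows essentially the same route as the paper. Both arguments apply It\^o's formula to \(|\delta Y^i|^2\), use a \(\lambda\)-dependent Lipschitz bound on the Yosida term, invoke the forward chaos estimates for \(\delta X\) and \(W_2(\mu^{N,P},\mu^P)\), split the backward empirical law into a coupling part plus a Fournier--Guillin part (using a uniform fourth moment of \(Y^{1,\lambda}\)), and conclude with Gronwall and BDG. The differences are minor refinements: you use (H5)/(H5bis) for the \(y\)-terms where the paper only needs Lipschitz continuity in \(y\) plus Gronwall, you justify the \(r\)-Lipschitz dependence of \(A^\lambda_{\mathrm{ALA}}\) and the one-dimensional rate \(N^{-1/2}\le\tau(N,d)\) for the \(Y\)-marginals, which the paper leaves implicit, and you run Gronwall backward from \(T\), the natural orientation for the energy inequality on \([t,T]\).
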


\begin{proof}
Fix \(P\in\mathcal P\), \(i\in\{1,\ldots,N\}\), and set
\[
\Delta Y_t^i:=Y_t^{i,N,\lambda}-Y_t^{i,\lambda},
\qquad
\Delta Z_t^i:=Z_t^{i,N,\lambda}-Z_t^{i,\lambda},
\qquad
\Delta X_t^i:=X_t^{i,N}-X_t^i.
\]
The two backward equations differ only through the forward state, the collective observables, the empirical law of the backward component and the Yosida-regularized ALA signal. We write the difference of the full generators as
\[
\Delta F_s^i
=
\Delta f_s^i
+
\rho\,\Delta A_s^{i,\lambda},
\]
where
\[
\begin{aligned}
\Delta f_s^i
&=
f\bigl(s,X_s^{i,N},\Psi_\gamma(\mu_s^{N,P}),
Y_s^{i,N,\lambda},Z_s^{i,N,\lambda},
\Psi_\delta(\mu_s^{N,Y,\lambda}),u_s\bigr)\\
&\quad
-
f\bigl(s,X_s^{i},\Psi_\gamma(\mu_s^{P}),
Y_s^{i,\lambda},Z_s^{i,\lambda},
\Psi_\delta(\mu_s^{P,Y^\lambda}),u_s\bigr),
\end{aligned}
\]
and
\[
\Delta A_s^{i,\lambda}
=
A^\lambda_{\mathrm{ALA}}
\bigl(X_s^{i,N},\Upsilon(\mu_s^{N,P})\bigr)
-
A^\lambda_{\mathrm{ALA}}
\bigl(X_s^i,\Upsilon(\mu_s^P)\bigr).
\]
By the Lipschitz assumptions on \(f\), the Wasserstein stability of the collective observables, and the uniform ellipticity of \(a_t\), we have
\[
\begin{aligned}
|\Delta f_s^i|
&\le
C
\Bigl(
|\Delta X_s^i|
+ W_2(\mu_s^{N,P},\mu_s^P)
+ |\Delta Y_s^i|
+ |\Delta Z_s^i|_{a_s}\\
&\qquad\qquad
+ W_2(\mu_s^{N,Y,\lambda},\mu_s^{P,Y^\lambda})
\Bigr).
\end{aligned}
\]
Moreover, for fixed \(\lambda>0\), the Yosida approximation is Lipschitz and the reference observable \(\Upsilon\) is Wasserstein-Lipschitz; hence
\[
\begin{aligned}
|\Delta A_s^{i,\lambda}|
&\le
C_\lambda
\Bigl(
|\Delta X_s^i|
+
|\Upsilon(\mu_s^{N,P})-\Upsilon(\mu_s^P)|
\Bigr)\\
&\le
C_\lambda
\Bigl(
|\Delta X_s^i|
+
W_2(\mu_s^{N,P},\mu_s^P)
\Bigr).
\end{aligned}
\]
Combining the two estimates yields
\[
\begin{aligned}
|\Delta F_s^i|
&\le
C_\lambda
\Bigl(
|\Delta X_s^i|
+ W_2(\mu_s^{N,P},\mu_s^P)
+ |\Delta Y_s^i|
+ |\Delta Z_s^i|_{a_s}\\
&\qquad\qquad
+ W_2(\mu_s^{N,Y,\lambda},\mu_s^{P,Y^\lambda})
\Bigr),
\end{aligned}
\]
where \(C_\lambda\) may depend on \(\lambda\), but is independent of \(N\) and \(P\).

Applying It\^o's formula to \(|\Delta Y_t^i|^2\) between \(t\) and \(T\), and using \(d\langle B\rangle_t=a_tdt\), gives
\[
\begin{aligned}
|\Delta Y_t^i|^2
+\int_t^T|\Delta Z_s^i|_{a_s}^2\,ds
&=
|\Delta Y_T^i|^2
+2\int_t^T
\langle\Delta Y_s^i,\Delta F_s^i\rangle\,ds\\
&\quad
-2\int_t^T
\bigl\langle
\Delta Y_s^i,\Delta Z_s^i\,dB_s
\bigr\rangle .
\end{aligned}
\]
Young's inequality allows the term involving \(|\Delta Z_s^i|_{a_s}^2\) to be absorbed into the left-hand side:
\[
\begin{aligned}
2\langle \Delta Y_s^i,\Delta F_s^i\rangle
&\le
\frac12|\Delta Z_s^i|_{a_s}^2
+C_\lambda
\Bigl(
|\Delta Y_s^i|^2
+|\Delta X_s^i|^2\\
&\qquad\qquad
+W_2^2(\mu_s^{N,P},\mu_s^P)
+W_2^2(\mu_s^{N,Y,\lambda},\mu_s^{P,Y^\lambda})
\Bigr).
\end{aligned}
\]
Taking expectation under \(P\), the stochastic integral has zero mean. Therefore, for every \(t\in[0,T]\),
\[
\begin{aligned}
\mathbb E^P\bigl[|\Delta Y_t^i|^2\bigr]
&+
\frac12\mathbb E^P
\int_t^T|\Delta Z_s^i|_{a_s}^2\,ds\\
&\le
\mathbb E^P\bigl[|\Delta Y_T^i|^2\bigr]
+
C_\lambda
\int_t^T
\mathbb E^P
\Bigl[
|\Delta Y_s^i|^2
+|\Delta X_s^i|^2\\
&\qquad\qquad
+W_2^2(\mu_s^{N,P},\mu_s^P)
+W_2^2(\mu_s^{N,Y,\lambda},\mu_s^{P,Y^\lambda})
\Bigr]\,ds .
\end{aligned}
\]
This is the basic energy inequality. Applying the Burkholder--Davis--Gundy inequality to the stochastic integral in the Itô identity, then using Young's inequality once more, gives the corresponding estimate with the time supremum:
\[
\begin{aligned}
\mathbb E^P\Bigl[
\sup_{0\le r\le t}|\Delta Y_r^i|^2
+
\int_0^t|\Delta Z_s^i|_{a_s}^2\,ds
\Bigr]
&\le
C_\lambda
\mathbb E^P\Bigl[
|\Delta Y_T^i|^2\\
&\quad
+
\int_0^t
\Bigl(
|\Delta X_s^i|^2
+
W_2^2(\mu_s^{N,P},\mu_s^P)\\
&\qquad
+
W_2^2(\mu_s^{N,Y,\lambda},\mu_s^{P,Y^\lambda})
\Bigr)ds
\Bigr].
\end{aligned}
\]

The terminal difference is controlled by the Lipschitz continuity of \(g\) and \(\Upsilon\):
\[
|\Delta Y_T^i|^2
\le
C\Bigl(
|\Delta X_T^i|^2
+
W_2^2(\mu_T^{N,P},\mu_T^P)
\Bigr).
\]
By Theorem~\ref{thm:robust-chaos}, the forward Wasserstein error is bounded by \(C\tau(N,d)\), uniformly in \(P\). The same theorem and Lemma~\ref{lem:propagation-forward} also control \(\Delta X^i\). More explicitly,
\[
\sup_{P\in\mathcal P}
\sup_{0\le s\le T}
\mathbb E^P
\Bigl[
|\Delta X_s^i|^2
+
W_2^2(\mu_s^{N,P},\mu_s^P)
\Bigr]
\le
C\,\tau(N,d).
\]

It remains to control the backward empirical term. By the triangle inequality,
\[
W_2^2(\mu_s^{N,Y,\lambda},\mu_s^{P,Y^\lambda})
\le
2W_2^2(\mu_s^{N,Y,\lambda},\bar\mu_s^{N,Y,\lambda})
+
2W_2^2(\bar\mu_s^{N,Y,\lambda},\mu_s^{P,Y^\lambda}).
\]
The first term is bounded by the coupling error:
\[
W_2^2(\mu_s^{N,Y,\lambda},\bar\mu_s^{N,Y,\lambda})
\le
\frac1N\sum_{j=1}^N
|Y_s^{j,N,\lambda}-Y_s^{j,\lambda}|^2.
\]
The second term is the empirical approximation error for the i.i.d. variables \(Y_s^{j,\lambda}\). For fixed \(\lambda\), the a priori estimates for the regularized backward system, together with the fourth moment estimate for the forward component and the linear growth of the generator, give a uniform fourth moment bound for \(Y^{j,\lambda}\). The Fournier--Guillin estimate therefore yields
\[
\sup_{P\in\mathcal P}
\sup_{0\le s\le T}
\mathbb E^P
\bigl[
W_2^2(\bar\mu_s^{N,Y,\lambda},\mu_s^{P,Y^\lambda})
\bigr]
\le
C_\lambda\,\tau(N,d).
\]
Combining the preceding inequalities and using exchangeability, we obtain, for
\[
\Theta(t):=
\sup_{P\in\mathcal P}
\sup_{1\le j\le N}
\mathbb E^P
\left[
\sup_{0\le r\le t}|Y_r^{j,N,\lambda}-Y_r^{j,\lambda}|^2
+
\int_0^t|Z_s^{j,N,\lambda}-Z_s^{j,\lambda}|_{a_s}^2ds
\right],
\]
the estimate
\[
\Theta(t)
\le
C_\lambda\,\tau(N,d)
+
C_\lambda\int_0^t\Theta(s)\,ds.
\]
Gronwall's lemma gives \(\Theta(T)\le C_\lambda\tau(N,d)\). The Wasserstein estimate for the empirical backward law follows from the same decomposition with \(\bar\mu_t^{N,Y,\lambda}\). This proves the theorem.
\end{proof}

\begin{remark}[Why the rate is not uniform in \((N,\lambda)\)]
\label{rem:nonuniform-backward-chaos}

The preceding theorem is stated for a fixed regularization parameter \(\lambda>0\). This is the natural level at which the regularized backward dynamics is Lipschitz. The constant \(C_\lambda\) may blow up as \(\lambda\downarrow0\), because the Lipschitz constant of the raw Yosida approximation \(A^\lambda_{\mathrm{ALA}}\) is of order \(1/\lambda\).

A rate that is uniform simultaneously in \(N\) and \(\lambda\) would require an additional non-contact condition at the threshold. A typical assumption is the existence of constants \(c,\alpha>0\) such that, for all sufficiently small \(r>0\),
\[
\sup_{P\in\mathcal P}
\sup_{0\le t\le T}
P\!\left(
\operatorname{dist}
\bigl(
X_t^P,
\{x\in\mathbb R^d:\ x=\Upsilon(\mu_t^P)\mathbf 1\}
\bigr)
\le r
\right)
\le
c\,r^\alpha.
\]
Equivalently, one may impose a stronger separation condition preventing the limiting forward state from spending mass on the non-smooth threshold. Without such a non-contact hypothesis, no uniform rate in \((N,\lambda)\) should be expected.
\end{remark}

\subsection{Remark on the convergence rate}

The rate $\tau(N,d)$ comes from \cite{FournierGuillin2015}. For a classical introduction to propagation of chaos, see \cite{Sznitman1991}; for general convergence methods for Markov processes and interacting particle systems, see \cite{EthierKurtz2009}.

\section{Discussion, limitations and future perspectives}
\label{sec:discussion-perspectives}

This section clarifies the exact scope of the results and the natural limitations of the framework developed in this paper. Without an additional structural assumption, the present analysis does not provide a complete propagation of chaos result that is uniform for the non-regularized backward component. The robust result established here is more precise: the forward propagation of chaos is quantitative and uniform over the non-dominated family \(\mathcal P\), whereas the backward propagation result is obtained for every fixed regularization parameter \(\lambda>0\).

To the best of our knowledge, no propagation-of-chaos result currently provides a rate that is uniform simultaneously in \((N,\lambda)\) for multivalued McKean--Vlasov systems governed by maximal monotone operators and a nonsmooth collective reference of the type considered here. This observation explains why we carefully separate the forward result, the fixed-\(\lambda\) backward result, and the open question of the simultaneous limit.

\subsection{Why the limit \(\lambda\downarrow0\) is delicate}

The main difficulty comes from the nonsmooth nature of the ALA functional. The threshold
\[
\{x\in\mathbb R^d:\ x=\Upsilon(\mu_t^P)\mathbf 1\}
\]
is exactly the set on which the subdifferential becomes multivalued. The Yosida approximation temporarily replaces this multivalued signal by a Lipschitz map, but its Lipschitz constant is typically of order \(1/\lambda\). Consequently, the backward estimates obtained at the regularized level naturally contain a constant \(C_\lambda\), which may deteriorate as \(\lambda\downarrow0\).

This phenomenon is not merely technical. Near the threshold, very small perturbations of the forward state or of the collective observable may change the effective selection of the subdifferential. In a particle approximation, such perturbations are unavoidable: they come both from the mean-field approximation error and from the empirical Wasserstein error. As long as the limiting dynamics may touch the threshold with non-negligible probability, a uniform control in \(\lambda\) should not be expected.

\subsection{Possible role of a non-contact condition}

A natural way to recover uniformity would be to impose a non-contact condition. Such a condition means that the limiting forward state does not spend too much time, or too much probability mass, near the nonsmooth threshold. For instance, one may assume that there exist constants \(c,\alpha>0\) such that
\[
\sup_{P\in\mathcal P}\sup_{0\le t\le T}
P\!\left(
\operatorname{dist}
\bigl(
X_t^P,
\{x\in\mathbb R^d:\ x=\Upsilon(\mu_t^P)\mathbf 1\}
\bigr)
\le r
\right)
\le c r^\alpha,
\qquad r>0.
\]
This hypothesis should not be imposed artificially in the main theorems of the present paper, because it restricts the scope of the model. Rather, it should be viewed as an additional structural assumption for a future analysis of the simultaneous limit \(N\to\infty\), \(\lambda\downarrow0\). Under such a condition, one may hope to control the contribution of the critical region and obtain sharper estimates for the non-regularized backward error.

\subsection{Exact scope of the results}

The results of this paper should therefore be read as follows. First, the robust forward dynamics is well-posed and stable under natural Lipschitz assumptions. Second, the selected backward variational component is constructed through a monotone signal
\[
\Gamma_t^P\in A_{\mathrm{ALA}}\bigl(X_t^P,\Upsilon(\mu_t^P)\bigr),
\]
and the canonical representation associated with the minimal norm selection gives a unique backward pair. Third, the forward propagation of chaos is quantitative and robust. Fourth, the regularized backward component propagates chaos for every fixed \(\lambda>0\), with a constant that may depend on \(\lambda\).

This formulation is intentionally precise: it avoids confusing the original multivalued problem with its Yosida approximation. It also makes visible the remaining mathematical program, namely the study of backward stability when the regularization vanishes while the number of particles tends to infinity.

\subsection{Future perspectives}

Several natural extensions emerge from this work. The first is to establish non-regularized backward propagation of chaos under a non-contact condition. The second concerns robust mean field games with asymmetric loss aversion, where the variational signal \(\Gamma\) would enter the equilibrium conditions. The third direction is robust optimal control and the associated Pontryagin maximum principle, especially when the collective observables are estimated from data. Finally, financial applications include robust portfolio management, behavioral criteria under volatility ambiguity, collective regulation, and systemic-risk models with asymmetric loss thresholds.

\section*{Conclusion}

We have developed a robust variational framework for forward--backward McKean--Vlasov systems under a non-dominated family of probabilities. The model combines law dependence, nonlinear collective observables, and asymmetric loss aversion, leading to a selected backward variational component driven by a distribution-dependent maximal monotone signal evaluated at the forward state.

Well-posedness is obtained in two steps: first for the forward dynamics, through a McKean--Vlasov fixed point, and then for the selected backward variational component, through Yosida regularization, uniform estimates, and Minty--Brezis graph closure. We also establish stability results with respect to data and collective observables.

The particle approximation provides quantitative forward propagation of chaos with constants that remain uniform over the admissible family. For every fixed regularization parameter \(\lambda>0\), the regularized backward component also propagates chaos with rate \(\tau(N,d)\), although the corresponding constant may deteriorate as \(\lambda\downarrow0\). This limitation is intrinsic to the non-smooth ALA threshold and motivates future work under non-contact assumptions.

Natural perspectives concern the simultaneous limit \(N\to\infty\), \(\lambda\to0\), robust mean field games with asymmetric loss aversion, partial observation, learning-based estimation of collective observables, and financial applications involving ambiguity-sensitive behavioral preferences.



\section*{Declarations}

\subsection*{Conflict of interest}

The authors declare that they have no known competing financial interests or personal relationships that could have appeared to influence the work reported in this article.

\subsection*{Use of artificial intelligence}

The authors declare that artificial intelligence tools were used only for linguistic editing, translation support, formatting, and clarity improvements. The authors reviewed and approved the final manuscript and remain fully responsible for its scientific content.

\subsection*{Data availability}

No datasets were generated or analyzed during the current study. This article is theoretical, and all results are derived from the assumptions and mathematical arguments presented in the manuscript.


\begin{thebibliography}{99}

\bibitem{AmbrosioGigliSavare2005} Ambrosio, L., Gigli, N. \& Savaré, G. (2005). \emph{Gradient Flows in Metric Spaces and in the Space of Probability Measures}. Birkhäuser.

\bibitem{BayraktarZhang2023} Bayraktar, E. \& Zhang, X. (2023). McKean-Vlasov forward--backward stochastic differential equations with jumps. \emph{Stochastic Processes and their Applications}, 155, 1-35.

\bibitem{Brezis1973} Brézis, H. (1973). \emph{Opérateurs maximaux monotones et semi-groupes de contractions dans les espaces de Hilbert}. North-Holland.

\bibitem{Brezis2011} Brezis, H. (2011). \emph{Functional Analysis, Sobolev Spaces and Partial Differential Equations}. Springer.

\bibitem{CarmonaDelarue2015} Carmona, R. \& Delarue, F. (2015). Forward-backward stochastic differential equations and controlled McKean-Vlasov dynamics. \emph{The Annals of Probability}, 43(5), 2647-2700.

\bibitem{CarmonaDelarue2018I} Carmona, R. \& Delarue, F. (2018). \emph{Probabilistic Theory of Mean Field Games with Applications I}. Springer.

\bibitem{CarmonaDelarue2018II} Carmona, R. \& Delarue, F. (2018). \emph{Probabilistic Theory of Mean Field Games with Applications II}. Springer.

\bibitem{DenisHuPeng2011} Denis, L., Hu, M. \& Peng, S. (2011). Function spaces and capacity related to a sublinear expectation: application to $G$-Brownian motion paths. \emph{Potential Analysis}, 34(2), 139-161.

\bibitem{Duan2025} Duan, S. (2025). McKean-Vlasov stochastic variational inequalities. \emph{Preprint}.

\bibitem{EthierKurtz2009} Ethier, S. N. \& Kurtz, T. G. (2009). \emph{Markov Processes: Characterization and Convergence}. Wiley.

\bibitem{FournierGuillin2015} Fournier, N. \& Guillin, A. (2015). On the rate of convergence in Wasserstein distance of the empirical measure. \emph{Probability Theory and Related Fields}, 162(3), 707-738.

\bibitem{HuaLuo2024} Hua, Z. \& Luo, P. (2024). McKean-Vlasov forward--backward stochastic differential equations with time-delayed generators. \emph{arXiv preprint}.

\bibitem{KahnemanTversky1979} Kahneman, D. \& Tversky, A. (1979). Prospect theory: An analysis of decision under risk. \emph{Econometrica}, 47(2), 263-291.

\bibitem{KaratzasShreve1991} Karatzas, I. \& Shreve, S. E. (1991). \emph{Brownian Motion and Stochastic Calculus}. Springer.

\bibitem{LasryLions2007} Lasry, J. M. \& Lions, P. L. (2007). Mean field games. \emph{Japanese Journal of Mathematics}, 2(1), 229-260.

\bibitem{Liu2025} Liu, Y. (2025). Backward stochastic variational inequalities of McKean-Vlasov type. \emph{Preprint}.

\bibitem{McKean1966} McKean, H. P. (1966). A class of Markov processes associated with nonlinear parabolic equations. \emph{Proceedings of the National Academy of Sciences}, 56(6), 1907-1911.

\bibitem{MaticiucRascanu2015} Maticiuc, L. \& R\u{a}\c{s}canu, A. (2015). Backward stochastic variational inequalities on random interval. \emph{Bernoulli}, 21(2), 1166-1199.

\bibitem{NingWuZheng2024} Ning, J., Wu, Z. \& Zheng, W. (2024). McKean-Vlasov stochastic variational inequalities. \emph{arXiv preprint}.

\bibitem{NutzVanHandel2013} Nutz, M. \& van Handel, R. (2013). Constructing sublinear expectations on path space. \emph{Stochastic Processes and their Applications}, 123(8), 3100-3121.

\bibitem{PardouxPeng1990} Pardoux, E. \& Peng, S. (1990). Adapted solution of a backward stochastic differential equation. \emph{Systems \& Control Letters}, 14(1), 55-61.

\bibitem{PardouxRascanu1998} Pardoux, E. \& R\u{a}\c{s}canu, A. (1998). Backward stochastic differential equations with subdifferential operator and real valued noise. \emph{Bernoulli}, 4(4), 441-461.

\bibitem{Peng2019} Peng, S. (2019). \emph{Nonlinear Expectations and Stochastic Calculus under Uncertainty}. Springer.

\bibitem{PengWu1999} Peng, S. \& Wu, Z. (1999). Fully coupled forward--backward stochastic differential equations and applications to optimal control. \emph{SIAM Journal on Control and Optimization}, 37(3), 825-843.

\bibitem{Rockafellar1970} Rockafellar, R. T. (1970). \emph{Convex Analysis}. Princeton University Press.

\bibitem{SonerTouziZhang2011} Soner, H. M., Touzi, N. \& Zhang, J. (2011). Quasi-sure stochastic analysis through aggregation. \emph{Electronic Journal of Probability}, 16, 1844-1879.

\bibitem{SonerTouziZhang2012} Soner, H. M., Touzi, N. \& Zhang, J. (2012). Wellposedness of second order backward SDEs. \emph{Probability Theory and Related Fields}, 153(1), 149-190.

\bibitem{Sznitman1991} Sznitman, A. S. (1991). Topics in propagation of chaos. \emph{Lecture Notes in Mathematics}, 1464, 165-251.

\bibitem{Villani2009} Villani, C. (2009). \emph{Optimal Transport: Old and New}. Springer.

\bibitem{Yosida1965} Yosida, K. (1965). \emph{Functional Analysis}. Springer.

\end{thebibliography}
\end{document}